\documentclass[oneside,article]{memoir}

\usepackage{amsmath}         % align-like environments
\usepackage{amsthm,thmtools,thm-restate} % theorem-like environments
\usepackage{enumitem}        % enumerate-like environments

\usepackage{rotating}

\usepackage[final]{listings}
\usepackage{bbm}
\usepackage{xcolor}

\definecolor{codegreen}{rgb}{0,0.6,0}
\definecolor{codegray}{rgb}{0.5,0.5,0.5}
\definecolor{codepurple}{rgb}{0.58,0,0.82}
\definecolor{backcolour}{rgb}{0.95,0.95,0.92}

\lstdefinestyle{mystyle}{
    backgroundcolor=\color{backcolour},   
    commentstyle=\color{codegreen},
    keywordstyle=\color{magenta},
    numberstyle=\tiny\color{codegray},
    stringstyle=\color{codepurple},
    basicstyle=\ttfamily\footnotesize,
    breakatwhitespace=false,         
    breaklines=true,                 
    captionpos=b,                    
    keepspaces=true,                 
    numbers=left,                    
    numbersep=5pt,                  
    showspaces=false,                
    showstringspaces=false,
    showtabs=false,                  
    tabsize=2
}

\usepackage{algorithm}
\usepackage{algpseudocode}
\usepackage{algorithmicx}
\usepackage{threeparttable}
\usepackage{booktabs}
\usepackage{float}

\usepackage{tikz}
\usetikzlibrary{matrix,arrows,positioning}
\usepackage{amsfonts}
\usepackage{parskip}
\usepackage{mathtools}
\usepackage{wrapfig}

\usepackage{tikz-cd}

\usepackage{caption, subcaption} % for subfigures

\usepackage[hidelinks]{hyperref}                   % hyperlinks
\usepackage[capitalize,nosort]{cleveref}           % named references

\usepackage{amssymb}
\usepackage[mathscr]{euscript}
\usepackage{relsize}
\usepackage{stmaryrd}
\usepackage{stackengine}

\usepackage{indentfirst} % indent the first paragraph
\usepackage{multicol}

\usepackage{graphicx}
\usepackage{ragged2e} % for nicer left alignment inside columns
\usepackage[inline,nomargin,draft]{fixme}

\isopage[12]

\setlrmargins{*}{*}{1}
\checkandfixthelayout

\counterwithout{section}{chapter}
\usepackage{appendix}

\makeatletter
\providecommand{\abx@aux@refcontext}[1]{}
\providecommand{\abx@aux@cite}[2]{}
\providecommand{\abx@aux@segm}[3]{}
\providecommand{\abx@aux@page}[2]{}
\providecommand{\abx@aux@fnpage}[2]{}
\providecommand{\abx@aux@backref}[5]{}
\providecommand{\abx@aux@defaultrefcontext}[3]{}
\providecommand{\abx@aux@read@bbl@mdfivesum}[1]{}
\providecommand{\abx@aux@read@bblrerun}{}
\providecommand{\abx@aux@sortscheme}[1]{}
\providecommand{\abx@aux@refsection}[1]{}
\providecommand{\abx@aux@number}[2]{}
\makeatother

\makeatletter
\let\dgm\@undefined
\makeatother

  \newcommand{\concat}{\mathbin{\ast}}

  \newcommand{\id}[1][]{\operatorname{id}_{#1}}
  
  \renewcommand{\hat}{\widehat}

  \newcommand{\ito}{\hookrightarrow}

  \declaretheorem[style=definition,within=section]{definition}
  \declaretheorem[style=definition,numberlike=definition]{example}
  \declaretheorem[style=definition,numberlike=definition]{remark}

  \declaretheorem[style=definition,numberlike=definition]{construction}

  \declaretheorem[style=plain,numberlike=definition]{corollary}
  \declaretheorem[style=plain,numberlike=definition]{lemma}
  \declaretheorem[style=plain,numberlike=definition]{proposition}
  \declaretheorem[style=plain,numberlike=definition]{theorem}

  \declaretheorem[style=plain,numbered=no,name=Theorem]{theorem*}

  \Crefname{corollary}{Corollary}{Corollaries}
  \Crefname{definition}{Definition}{Definitions}
  \Crefname{lemma}{Lemma}{Lemmas}
  \Crefname{proposition}{Proposition}{Propositions}
  \Crefname{remark}{Remark}{Remarks}
  \Crefname{theorem}{Theorem}{Theorems}
  \Crefname{notation}{Notation}{Notations}
  \Crefname{conjecture}{Conjecture}{Conjectures}

  \newlist{axioms}{enumerate}{1}
  \Crefname{axiomsi}{}{}

  \newenvironment{tikzeq*}
  {
    \begingroup
    \begin{equation*}
    \begin{tikzpicture}[baseline=(current bounding box.center)]
  }
  {
    \end{tikzpicture}
    \end{equation*}
    \endgroup
    \ignorespacesafterend
  }

  \tikzset
  {
    diagram/.style=
    {
      matrix of math nodes,
      column sep={4.3em,between origins},
      row sep={4em,between origins},
      text height=1.5ex,
      text depth=.25ex
    },
    over/.style={preaction={draw=white,-,line width=6pt}},
    every to/.style={font=\footnotesize},
    inj/.style={right hook->},
    surj/.style={-{Latex[open]}},
    cof/.style={>->},
    fib/.style={->>},
  }

  \DeclareFontFamily{U}{mathx}{\hyphenchar\font45}

  \DeclareFontShape{U}{mathx}{m}{n}{
    <5> <6> <7> <8> <9> <10>
    <10.95> <12> <14.4> <17.28> <20.74> <24.88>
    mathx10}{}

  \DeclareSymbolFont{mathx}{U}{mathx}{m}{n}

  \DeclareFontFamily{U}{mathb}{\hyphenchar\font45}

  \DeclareFontShape{U}{mathb}{m}{n}{
    <5> <6> <7> <8> <9> <10>
    <10.95> <12> <14.4> <17.28> <20.74> <24.88>
    mathb10}{}

  \DeclareSymbolFont{mathb}{U}{mathb}{m}{n}

  \DeclareMathAccent{\widebar}{0}{mathx}{"73}

  \DeclareMathSymbol{\Rsh}{\mathrel}{mathb}{"E9}

  \DeclareFontFamily{U}{MnSymbolA}{}

  \DeclareFontShape{U}{MnSymbolA}{m}{n}{
    <-6> MnSymbolA5
    <6-7> MnSymbolA6
    <7-8> MnSymbolA7
    <8-9> MnSymbolA8
    <9-10> MnSymbolA9
    <10-12> MnSymbolA10
    <12-> MnSymbolA12}{}

  \DeclareSymbolFont{MnSyA}{U}{MnSymbolA}{m}{n}

  \DeclareMathSymbol{\twoheaddownarrow}{\mathrel}{MnSyA}{27}

  \newcommand{\MSC}[1]{%
    \let\thempfn\relax
    \footnotetext[0]{2020 Mathematics Subject Classification: #1.}
  }

  \newcommand{\dgm}{\operatorname{Dgm}}   % persistence diagram
\newcommand{\cmpl}[1]{\overline{#1}}
\newcommand{\Hom}{\operatorname{Hom}}

\newcommand{\regularpath}[3]{C_{#3}(#1;#2)}
\newcommand{\identity}[1]{\mathrm{id}_{#1}}

\newcommand{\subsets}[1]{\mathcal{P}(#1)}

\newcommand{\allpaths}[3]{K_{#3}(#1;#2)}
\newcommand{\degeneratepaths}[3]{D_{#3}(#1;#2)}
\newcommand{\allowedpaths}[3]{A_{#3}(#1;#2)}

\newcommand{\hPath}[1][]{\operatorname{H}^{\mathrm{Path}}_{#1}}   % path homology, degree as optional arg
\newcommand{\hlgy}[1]{\operatorname{H}^{\mathrm{Path}}(#1)}
\newcommand{\nhlgy}[2]{H_{#1}(#2)}
\newcommand{\nRhlgy}[3]{H_{#1}(#2;#3)}

\newcommand{\catpathcomplex}{\mathcal{P}}

\newcommand{\dbottleneck}[2]{d_{\mathrm{B}}(#1,#2)}

\newcommand{\dhyper}[2]{d_{\mathrm{H}}(#1,#2)}
\newcommand{\dpathcomplex}[2]{d_{\mathrm{path}}(#1,#2)}
\newcommand{\dDhyper}[2]{d_{\mathrm{DH}}(#1,#2)}
\newcommand{\dnetwork}[2]{d_{\mathrm{N}}(#1,#2)}

\newcommand{\dispath}{\operatorname{dis}_{\mathrm{path}}}
\newcommand{\dishyper}{\operatorname{dis}_{\mathrm{H}}}
\newcommand{\disDhyper}{\operatorname{dis}_{\mathrm{DH}}}
\newcommand{\disnetwork}{\operatorname{dis}_{\mathrm{N}}}

\tikzstyle{vertex}=[circle, draw, minimum size=7pt, inner sep=0pt]

\newcommand{\Cyl}{\mathsf{Cyl}}

\DeclareFontFamily{U}{dmjhira}{}
\DeclareFontShape{U}{dmjhira}{m}{n}{ <-> dmjhira }{}

\author{Krzysztof Kapulkin \and Kyle Koyanagi}

\title{Stability of persistent path homology of path complexes}

\date{\today}

\begin{document}

\maketitle

\begin{abstract}
  We show stability of persistent path homology of path complexes.
  As a consequence, we deduce the stability of persistent path homology of hypergraphs and of sequence hypergraphs, and recover the known stability result for digraphs, originally due to Chowdhury and M{\'e}moli.
  
  \noindent \textbf{Keywords:} path homology, persistence, persistent path homology, path complex, hypergraph, network distance, stability, bottleneck distance
\end{abstract}

%  \setlist[enumerate]{label=(\arabic*)}

% Add content here

\tableofcontents*

\section*{Introduction}

Path homology is an invariant of directed graphs, introduced by Grigor'yan, Lin, Muranov, and Yau \cite{grigor2012homologies}.
It is sensitive to the orientation of edges, and in this it differs from invariants that factor through the undirected graph underlying a digraph.
The two digraphs below have the same underlying undirected graph, yet their first path homology groups differ:
\begin{figure}[H]
\begin{center}
\begin{tikzpicture}[>=stealth, vtx/.style={circle, fill=black, inner sep=1.6pt}]
  \begin{scope}
    \node[vtx] (a) at (0,0) {};      \node at (-0.3,-0.1) {$a$};
    \node[vtx] (b) at (1,1.5) {};    \node at (1,1.85) {$b$};
    \node[vtx] (c) at (2,0) {};      \node at (2.3,-0.1) {$c$};
    \draw[->] (a) -- (b);
    \draw[->] (b) -- (c);
    \draw[->] (a) -- (c);
    \node at (1,-0.85) {$\dim \hPath[1] = 0$};
  \end{scope}
  \begin{scope}[xshift=6cm]
    \node[vtx] (a2) at (0,0) {};     \node at (-0.3,-0.1) {$a$};
    \node[vtx] (b2) at (1,1.5) {};   \node at (1,1.85) {$b$};
    \node[vtx] (c2) at (2,0) {};     \node at (2.3,-0.1) {$c$};
    \draw[->] (a2) -- (b2);
    \draw[->] (b2) -- (c2);
    \draw[->] (c2) -- (a2);
    \node at (1,-0.85) {$\dim \hPath[1] = 1$};
  \end{scope}
\end{tikzpicture}
\end{center}
\end{figure}

Much work has been done on path homology since its introduction, and we make no attempt to survey it here \cite{grigoryans, grigor2020pathcomplexes, grigor2019homology, grigoryan-jimenez-muranov-yau:eilenberg-steenrod, grigoryan-muranov-yau:kunneth, grigoryan-muranov-yau:graphs-simplicial, carranza-doherty-kapulkin-opie-sarazola-wong:cofibration, di-ivanov-mukoseev-zhang:cayley, fu-ivanov:multisquares, hepworth-roff:bigraded}.
Two points from that literature bear directly on what follows.
Path homology is genuinely different from other homology theories of graphs: Barcelo, Greene, Jarrah, and Welker compare it with discrete homology \cite{babson-barcelo-longueville-laubenbacher,barcelo-capraro-white,carranza-kapulkin:cubical-graphs} and show that the two do not agree \cite{barcelo-greene-jarrah-welker:comparison}.
It is closely related via a spectral sequence to magnitude homology \cite{hepworth-willerton}, as shown by Asao \cite{asao}.

A weighted digraph is what is elsewhere called a network, and networks are among the most common data structures in the sciences, arising from the Internet, from social interactions, and from biological systems \cite{newman:complex-networks}.
Recovering their global structure is a central problem in data analysis.
Topological data analysis approaches such problems by extracting features that survive small perturbations of the input, and its main tool is persistent homology \cite{edelsbrunner-letscher-zomorodian:persistence, zomorodian-carlsson:computing-persistent-homology, carlsson:topology-and-data}.
Instead of fixing one scale, one builds a filtration of combinatorial objects indexed by a resolution parameter, applies homology at each stage, and records the scales at which features are born and die.
Since its introduction the method has found applications in areas as disparate as time series analysis \cite{umeda2017time}, neuroscience \cite{gardner-et-al}, and biology \cite{benjamin-et-al}, and this list is far from exhaustive.

For directed data these two lines meet.
Chowdhury and M{\'e}moli extended path homology to the persistent setting, obtaining persistent path homology together with a stability theorem and an algorithm for its computation \cite{chowdhury2018persistent}.
They apply it to a classification task on a database of simulated hippocampal networks, where the question is whether the spike trains of place cells alone reveal the structure of the environment an animal explores.
The same authors developed further tools for asymmetric data, among them a functorial Dowker theorem and a study of network distances and the stability of network invariants \cite{chowdhury-memoli:dowker, chowdhury-memoli:distances}, and path homology has since been applied to temporal networks and to deep feedforward networks \cite{chowdhury-huntsman-yutin:motifs, chowdhury-gebhart-huntsman-yutin:deep-networks}.
Chaplin, Harrington, and Tillmann introduced grounded persistent path homology, a functorial descriptor of edge-weighted digraphs with geometrically interpretable representatives \cite{chaplin-harrington-tillmann:grounded}.
Efficient computation in degree one is due to Dey, Li, and Wang \cite{dey-li-wang:path-homology}.

Path homology is not confined to digraphs.
Grigor'yan, Lin, Muranov, and Yau isolated the notion of a \emph{path complex}, a set of vertices together with a collection of allowed paths closed under deleting the first or the last vertex, and defined path homology at that level of generality \cite{grigor2020pathcomplexes}.
Digraphs and simplicial complexes are both canonically path complexes, and for them path homology recovers digraph path homology and simplicial homology respectively.
Hypergraphs give further examples \cite{grigor2019homology}, as do the sequence hypergraphs of \cref{sec:sequence-hypergraph-stability}.
Path complexes are thus a common home for a range of familiar combinatorial objects, and a theorem proved for path complexes applies to all of them at once.

This is the setting for the present paper, whose subject is stability.
Stability is what makes persistence usable on measured data.
If two objects are close, their persistence diagrams should be close as well, so that a small error in the input cannot produce a large change in the output.
Making this precise calls for a distance on the objects, a distance on diagrams, and a theorem comparing the two.
For point clouds such a theorem is due to Chazal, de Silva, and Oudot \cite{chazal-desilva-oudot:stability-geometric-complexes}, and for directed networks to Chowdhury and M\'emoli \cite{chowdhury2018persistent}.

Our main result, \cref{thm:path-stability}, is a stability theorem for the persistent path homology of filtered path complexes.
To pass from it to the objects above, we set up a covering framework in \cref{sec:framework}, which builds a filtered object from a weight on its generating cells and reduces stability for that object to \cref{thm:path-stability}.
Instantiating the framework gives stability for hypergraphs in \cref{thm:hyp-stability}, for sequence hypergraphs in \cref{thm:dhyp-stability}, and for digraphs in \cref{thm:net-stability}.
The theorem of Chowdhury and M\'emoli for digraphs comes out as a special case.

We claim no novelty in the method.
The proof of \cref{thm:path-stability} follows the template of Chazal, de Silva, and Oudot, adapted to networks by Chowdhury and M\'emoli: a correspondence between two objects is used to build an interleaving of the associated persistence modules, and algebraic stability converts that interleaving into a bound on bottleneck distance.
What we contribute is the adaptation of their metrics to path complexes, hypergraphs, and sequence hypergraphs, together with an arrangement of the argument in which the steps shared by all of these objects are isolated and proved once.

Two other papers call for comment.
Zhang also proves stability of persistent path homology, for weighted digraphs and for edge-weighted path complexes, using the homotopy theory of digraphs together with interleaving and bottleneck distances \cite{zhang:persistent-path-diagrams}.
Those results overlap ours for digraphs.
The difference lies in scope and organization: we prove the path complex case once and obtain hypergraphs, sequence hypergraphs, and digraphs as instances of a single framework, while Zhang works with weighted digraphs and edge-weighted path complexes directly.
Bubenik and Mili\'cevi\'c, in turn, prove a stability theorem of considerable generality, namely that any persistence module arising from a homotopy-invariant functor on filtrations of \v{C}ech closure spaces is stable \cite{bubenik-milicevic}.
Their examples include metric spaces, weighted graphs, and weighted digraphs, subsuming a good deal of existing literature.
Path complexes and hypergraphs, however, are not closure spaces, and path homology is not among the homology theories they construct, so their theorem does not cover the results proved here.
Our argument nonetheless proceeds along the same lines as theirs, and along the same lines as the standard proofs of stability.

The paper is organized as follows.
\Cref{sec:prelims} recalls what we need about path complexes, path homology, and persistence.
\Cref{sec:framework} sets out the covering framework and the four steps by which each subsequent section is organized.
\Cref{sec:path-cpx-stability} proves the stability theorem for path complexes.
\Cref{sec:hypergraph-stability}, \cref{sec:sequence-hypergraph-stability}, and \cref{sec:network-stability} instantiate the framework for hypergraphs, sequence hypergraphs, and digraphs.
\Cref{sec:conclusion} summarizes the results and describes directions for further work.
\section{Preliminaries} \label{sec:prelims}

In this section, we collect the preliminary material on path homology.
Our review is necessarily brief and we refer the reader to the original sources \cite{grigoryans} for a more thorough overview of the subject.
Throughout, $R$ denotes a fixed commutative ring, over which all modules and chain complexes are taken; we suppress it from the notation where no confusion arises.

\subsection{Path Complexes}
For $n \geq 0$, an \emph{$n$-path} on a set $X$ is a sequence $(x_0, \dots, x_n)$ of elements of $X$, also called \emph{vertices}.
In other words, an $n$-path consists of $n+1$ vertices, and the set of all such paths is the iterated product $X^{n+1}$.
The set of all paths on $X$ will be denoted $X^{<\infty}$.
A path $(x_0,\dots,x_n)$ is called \emph{regular} if $x_i \neq x_{i+1}$ for all $0 \leq i \leq n-1$, and \emph{irregular} otherwise.

We define the following $R$-modules:
\begin{itemize}
    \item $\allpaths{X}{R}{n} = R\{X^{n+1}\}$ is the free $R$-module on $X^{n+1}$;
    \item $\degeneratepaths{X}{R}{n} = R \{(x_0, \dots ,x_n) \in X^{n+1} \ | \ x_i = x_{i+1} \text{ for some } 0 \leq i < n \}$ is the $R$-submodule of $\allpaths{X}{R}{n}$ spanned by irregular paths;
    \item $\regularpath{X}{R}{n} = \allpaths{X}{R}{n}/\degeneratepaths{X}{R}{n}$ is the quotient of the $R$-module of all paths by the $R$-submodule of irregular paths.
    (We observe that $\regularpath{X}{R}{n}$ is again free, generated by regular paths.)
\end{itemize}
The \emph{differential} $\partial_n \colon \allpaths{X}{R}{n} \to \allpaths{X}{R}{n-1}$ is a linear operator defined by: 
\[
\partial_n(x_0, \dots, x_n) = \sum_{i=0}^{n}(-1)^i(x_0, \dots, \hat{x_i}, \dots, x_n)
\]
where $\hat{x_i}$ means we remove $x_i$ from the path.
As $\partial_n(\degeneratepaths{X}{R}{n}) \subseteq \degeneratepaths{X}{R}{n-1}$, the differential descends to $\partial_n \colon \regularpath{X}{R}{n} \to \regularpath{X}{R}{n-1}$. 
It is easy to verify that both $\allpaths{X}{R}{\bullet}$ and $\regularpath{X}{R}{\bullet}$ with this boundary operator are chain complexes.
    
A set function $f \colon X \to Y$ induces  a morphism of chain complexes $f_* \colon \allpaths{X}{R}{n} \to \allpaths{Y}{R}{n}$ by applying $f$ to each basis element componentwise.
Furthermore, since $f_*(\degeneratepaths{X}{R}{n}) \subseteq \degeneratepaths{Y}{R}{n}$, the morphism $f_*$ descends to a morphism of chain complexes of regular paths $f_* \colon \regularpath{X}{R}{*} \to \regularpath{Y}{R}{*}$.

\begin{definition} \label{def:path-complex}
A \emph{path complex} $P$ on a vertex set $X$ is a collection of elementary paths such that:
\begin{enumerate}
    \item $(x) \in P$ for every vertex $x$;
    \item if $(x_0,\dots,x_n) \in P$, then $(x_0,\dots, x_{n-1})\in P$ and $(x_1, \dots, x_n) \in P$.
\end{enumerate}
\end{definition}

Given a path complex $P$, we denote its paths of length $n \geq 0$ by $P_n$.
The elements of $P$ are called \emph{allowed elementary paths}, and the elementary paths that are not in $P$ are called \emph{non-allowed}.
With this notation in place, we can suppress $X$ from notation altogether since the first condition implies that $X = P_0$.

The first examples of path complexes are given by digraphs:

\begin{example}
Recall that a \emph{directed graph} (or \emph{digraph}) is a pair $G=(X, E_G)$, where $X$ is a set (of \emph{vertices}) and $E_G \subseteq X \times X$ is a reflexive relation on $X$, i.e., a set of edges containing all pairs of the form $(x, x)$.
Given a digraph $G$, we can form a path complex by declaring a path $(x_0, x_1, \ldots, x_n)$ to be in $P$ if $(x_i, x_{i+1})$ is in $E_G$ for all $0 \leq i < n$.
\end{example}

\begin{definition}
A \emph{map of path complexes} $f \colon P \to Q$ is a function $f \colon P_0 \to Q_0$ such that for any path $(x_0, \dots, x_n) \in P$, the path $(f(x_0), \dots, f(x_n))$ lies in $Q$.
\end{definition}

The identity map $\id[P] \colon P \to P$ is a map of path complexes and that maps of path complexes compose, thus giving us a well-defined category $\catpathcomplex$ whose objects are path complexes and whose morphisms are morphisms of path complexes. 

\begin{definition}
For any integer $n \geq 0$, define the $R$-module $\allowedpaths{P}{R}{n}$ of \emph{allowed $n$-paths} spanned by the elementary $n$-paths of $P$:
\[
\allowedpaths{P}{R}{n} = R\{P_n\}.
\] 
\end{definition}

%Note that the collection of all paths on a vertex set $X$ is itself a path complex, the \emph{full path complex} on $X$; for it we have $\allowedpaths{P}{R}{n} = \allpaths{X}{R}{n}$.

We would like to restrict the boundary operator $\partial_n$ to allowed paths and obtain a chain complex, but this need not be possible: for $p \in \allowedpaths{P}{R}{n}$ we might not have that $\partial_n(p) \in \allowedpaths{P}{R}{n-1}$.
To see this, consider the path complex $P$ coming from a digraph with: 
\[
    P_0 = \{0,1,2\}, \qquad P_1 = \{(0,1),(1,2)\}, \qquad P_2 = \{(0,1,2)\}. 
\]
We calculate $\partial_2(0,1,2) = (1,2) - (0,2) + (0,1)$.
Since $(0,2) \notin P_1$, the path $p = (0,1,2)$ satisfies $\partial_2 p \notin \allowedpaths{P}{R}{1}$.
To circumvent this issue we restrict to further submodules of $\allowedpaths{P}{R}{n}$.

\begin{definition}
For a path complex $P$, define the complex of \emph{$\partial$-invariant paths} by:
\[
    \Omega_n(P; R) = \{v \in \allowedpaths{P}{R}{n} \ | \ \partial_n(v) \in \allowedpaths{P}{R}{n-1}\}\text{.}
\]
\end{definition}

It is straightforward to verify that 
\[
\cdots \longrightarrow \Omega_{n+1}(P; R) \longrightarrow \Omega_n (P; R) \longrightarrow \Omega_{n-1}(P; R) \longrightarrow \cdots \longrightarrow \Omega_0(P; R) \longrightarrow 0
\]
is a chain complex.
Moreover, any morphism $f \colon P \to Q$ of path complexes induces a morphism of chain complexes $f_* \colon \Omega_*(P; R) \to \Omega_*(Q; R)$, making $\Omega_*$ a functor from the category of path complexes to the category of chain complexes.

To see that this restriction yields interesting results, consider the following example.

\begin{example}
Consider the digraph 
\begin{center}
\begin{tikzpicture}[
    vertex/.style = {circle, fill=black, minimum size = 5pt, outer sep = 5 pt, inner sep = 0pt}
]
\node (A) at (0,0) [vertex, label=above left:$0$] {};
\node (B) at (2,0) [vertex, label=above right:$1$] {};
\node (C) at (0,-2) [vertex, label=below left:$2$] {};
\node (D) at (2,-2) [vertex, label=below right:$3$] {};

\draw[->,thick] (A) -- (B);
\draw[->,thick] (A) -- (C);
\draw[->,thick] (B) -- (D);
\draw[->,thick] (C) -- (D);
\end{tikzpicture}
\end{center}
Let $P$ be the corresponding path complex.
By definition, the allowed $2$-paths of $P$ are $(0, 1, 3)$ and $(0, 2, 3)$, so $\allowedpaths{P}{R}{2}$ is spanned by these two paths.
Their boundaries are $\partial_2(0, 1, 3) = (1, 3) - (0, 3) + (0, 1)$ and $\partial_2(0, 2, 3) = (2, 3) - (0, 3) + (0, 2)$.
Since $(0, 3)$ is not an edge in the graph, we have $(0, 3) \notin \allowedpaths{P}{R}{1}$, but the linear combination $(0, 1, 3) - (0, 2, 3)$ has boundary $\partial_2((0, 1, 3) - (0, 2, 3)) = (1, 3) + (0, 1) - (2, 3) - (0, 2)$, which is allowed. 
Hence $\Omega_2(P;R)$ is spanned by $(0, 1, 3) - (0, 2, 3)$.
\end{example}

\begin{definition}
    Let $P$ be a path complex. 
    The \emph{path homology} of $P$ with $R$-coefficients, denoted $\nRhlgy{\bullet}{P}{R}$, is the homology of the chain complex $(\Omega_\bullet(P;R),\partial_\bullet)$; that is, the $n$-th path homology group is given by
    \[
        \nhlgy{n}{P; R} = \ker(\partial_n)/\operatorname{im}(\partial_{n+1}).
    \]
\end{definition}

Using functoriality of homology, we obtain that a map of path complexes $f \colon P \to Q$ induces a homomorphism of graded abelian groups $f_* \colon \nhlgy{*}{P; R} \to \nhlgy{*}{Q; R}$ on homology.

\subsection{Homotopy in the category of path complexes}

In classical algebraic topology, homology is homotopy invariant, i.e., homotopic maps induce the same homomorphism in homology, and consequently, homotopy equivalent spaces have isomorphic homology groups.
The same is true for path homology of path complexes, but to state this result, we must first establish the notion of homotopy of maps of path complexes.
To do so, we recall the definition of the box product of path complexes, which is due to Grigor'yan, Lin, Muranov, and Yau \cite{grigor2020pathcomplexes}, where it is used to establish a K\"unneth formula for path homology.

\begin{definition}
    The box product of path complexes $P$ and $Q$ is a path complex $P \square Q$ on the set of vertices $P_0 \times Q_0$ and whose $n$-paths are sequences
    \[ ((x_0, y_0), (x_1, y_1), \ldots, (x_n, y_n)),\]
    such that: 
    \begin{enumerate}
        \item for each $0 \leq i < n$, exactly one of $x_i = x_{i+1}$ and $y_i = y_{i+1}$ holds; and
        \item the sequences obtained from $(x_0, x_1, \ldots, x_n)$ and $(y_0, y_1, \ldots, y_n)$ by collapsing each run of repeated consecutive vertices to a single vertex are allowed paths in $P$ and $Q$ respectively.
    \end{enumerate}
\end{definition}

Condition (1) forces consecutive vertices to differ, so every path of $P \square Q$ is regular.
This matches the original source, where step-like paths are regular by definition and the regular boundary operator is used throughout, and it costs nothing here, since irregular paths are already quotiented out in $\regularpath{X}{R}{\bullet}$.
The collapsing in (2) is needed because each step leaves one of the two coordinates fixed, so that as soon as $n \geq 1$ at least one of the unreduced sequences $(x_0, \ldots, x_n)$ and $(y_0, \ldots, y_n)$ is irregular.

Note that in the original source \cite{grigor2020pathcomplexes}, this notion is referred to as the \emph{Cartesian} product; we chose to diverge from this naming convention since it could easily lead to confusion with the categorical product.

Consider the path complex $I$ arising from the following digraph:
\begin{center}
\begin{tikzpicture}[
    vertex/.style = {circle, fill=black, minimum size = 5pt, outer sep = 5 pt, inner sep = 0pt}
]
\node (A) at (-1.5,0) [vertex, label=above left:$0$] {};
\node (B) at (1.5,0) [vertex, label=above right:$1$] {};

\draw[->,thick] (A) -- (B);
\end{tikzpicture}
\end{center}

For a path complex $P$, the \emph{cylinder} on $P$ is the path complex $\Cyl(P) = P \square I$.
Concretely, a path of $\Cyl(P)$ is a staircase whose $I$-coordinate is constant apart from a single step from $0$ to $1$; its $I$-projection collapses to the edge $(0,1)$, and its $P$-projection collapses to a path of $P$.
There are canonical inclusions $i_0, i_1 \colon P \hookrightarrow \Cyl(P)$ induced by the set maps:
\[ i_0(x) = (x, 0) \qquad \text{and} \qquad i_1(x) = (x, 1)\text{.} \]

\begin{definition}\leavevmode
\begin{itemize}
    \item For maps $f, g \colon P \to Q$ of path complexes, a \emph{one-step homotopy} from $f$ to $g$ is a map of path complexes $\alpha \colon \Cyl(P) \to Q$ such that the following diagram commutes:
    \[
    \begin{tikzcd}[column sep = huge]
        P \arrow[d, "i_0", swap] \arrow[dr, "f", bend left = 25] & \\
        \Cyl(P) \arrow[r, "\alpha"] & Q \\
        P \arrow[u, "i_1"] \arrow[ur, "g", swap, bend right = 25]&
    \end{tikzcd}
    \]
    \item A \emph{homotopy} between maps $f, g \colon P \to Q$ of path complexes is a sequence of maps $(f = f_0, f_1, \ldots, f_n = g)$ such that for all $0 \leq i < n$, we have that either $(f_i, f_{i+1})$ or $(f_{i+1}, f_i)$ are one-step homotopic.
    We write $f \simeq g$ to indicate that $f$ and $g$ are homotopic, i.e., that there exists a homotopy from $f$ to $g$.
    \item A map $f \colon P \to Q$ of path complexes is a homotopy equivalence if there exists a map $g \colon Q \to P$ of path complexes along with homotopies $gf \simeq \id[P]$ and $fg \simeq \id[Q]$.
\end{itemize}
\end{definition}

With these definitions in place, we can now state the homotopy invariance property of path homology.

\begin{theorem}[\cite{grigoryans}] \label{thm:homotopic-equal-on-hlgy}
    Homotopic maps of path complexes $f, g \colon P \to Q$ induce chain homotopic maps $f_*, g_* \colon \Omega_*(P; R) \to \Omega_*(Q; R)$, and consequently equal maps on homology groups. \qed
\end{theorem}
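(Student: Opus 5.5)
It suffices to handle a one-step homotopy. Homotopy is generated by one-step homotopies in either direction, chain homotopy is an equivalence relation, and chain homotopic maps induce the same map on homology. So assume $\alpha \colon \Cyl(P) \to Q$ is a map of path complexes with $\alpha i_0 = f$ and $\alpha i_1 = g$. Functoriality of $\Omega_*$ gives a chain map $\alpha_* \colon \Omega_*(\Cyl(P)) \to \Omega_*(Q)$. The plan is to build an algebraic cylinder operator on $P$ and push it forward along $\alpha_*$.

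For each $n$, define $L \colon \allpaths{P_0}{R}{n} \to \allpaths{P_0 \times \{0,1\}}{R}{n+1}$ on basis paths (modulo irregular ones) by
\[
L(x_0,\dots,x_n) = \sum_{k=0}^{n} (-1)^{k}\bigl((x_0,0),\dots,(x_k,0),(x_k,1),\dots,(x_n,1)\bigr).
\]
This is the standard prism or staircase decomposition. The first key step is the combinatorial identity
\[
\partial L + L \partial = (i_1)_* - (i_0)_*
\]
on $\regularpath{P_0}{R}{\bullet}$, possibly up to a global sign convention. It holds already on all paths by the usual telescoping computation: deleting the doubled vertex in adjacent terms cancels, and the two extreme terms give the two ends. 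Irregular paths are sent to combinations of irregular paths, so $L$ descends to regular paths.

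The second and main step is to show that $L$ restricts to a map $\Omega_n(P) \to \Omega_{n+1}(\Cyl(P))$. First, each staircase above, coming from an allowed path of $P$, is allowed in $\Cyl(P) = P \square I$. Its $I$-projection collapses to $(0,1)$ and its $P$-projection collapses to $(x_0,\dots,x_n)$. This uses regularity of the path; irregular summands vanish in the regular quotient anyway. So $L(\allowedpaths{P}{R}{n}) \subseteq \allowedpaths{\Cyl(P)}{R}{n+1}$.

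Second, for $v \in \Omega_n(P)$ we need $\partial L v$ to be allowed. By the identity, $\partial L v = (i_1)_*v - (i_0)_*v - L\partial v$. Here $\partial v \in \allowedpaths{P}{R}{n-1}$, so $L\partial v$ is allowed by the first observation, and $(i_\epsilon)_* v$ is clearly allowed. Hence $Lv \in \Omega_{n+1}(\Cyl(P))$. I expect the care here to go into checking the identity in the regular quotient, where terms can become irregular when $x_k = x_{k+1}$. This step is where I would be most careful, verifying that the telescoping survives passage to $\regularpath{\cdot}{R}{\bullet}$.

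Finally, set $h = \alpha_* \circ L \colon \Omega_n(P) \to \Omega_{n+1}(Q)$. Since $\alpha_*$ is a chain map,
\[
\partial h + h\partial = \alpha_*(\partial L + L\partial) = \alpha_*(i_1)_* - \alpha_*(i_0)_* = g_* - f_*.
\]
So $h$ is a chain homotopy between $f_*$ and $g_*$, and hence $f$ and $g$ induce equal maps on $\nhlgy{*}{-;R}$.
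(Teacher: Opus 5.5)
Your argument is correct, and it is essentially the standard proof in the cited work of Grigor'yan et al.; the paper itself only cites that result and gives no proof. That proof also uses the staircase operator $v \mapsto \hat v$ with the prism identity $\partial \hat v = v' - v - \widehat{\partial v}$, shows it carries $\Omega_n(P)$ into $\Omega_{n+1}(\Cyl(P))$ exactly as you do, and then pushes it forward along the one-step homotopy $\alpha$. The passage to the regular quotient that you flag as delicate is in fact automatic: $L$, $\partial$ and $(i_\epsilon)_*$ all preserve the submodules of irregular paths, so the identity on $K_\bullet$ descends to $C_\bullet$ with no further checking.
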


\subsection{Persistence, interleavings, and stability} \label{subsec:persistence-background}

We briefly recall the persistence-theoretic notions used throughout; for thorough treatments see \cite{chazal-desilva-glisse-oudot:structure-stability, otter-et-al:roadmap}.
Fix a field $\mathbb{F}$ and a homological degree $k \in \mathbb{N}$.

A \emph{persistence module} is a family $\{V_\delta\}_{\delta \geq 0}$ of $\mathbb{F}$-vector spaces together with linear maps $v_\delta^{\delta'} \colon V_\delta \to V_{\delta'}$ for $\delta \leq \delta'$ satisfying $v_\delta^\delta = \id$ and $v_{\delta'}^{\delta''} \circ v_\delta^{\delta'} = v_\delta^{\delta''}$.
Applying degree-$k$ path homology with coefficients in $\mathbb{F}$ to a filtered path complex $P = \{P^\delta\}_{\delta \geq 0}$ yields a persistence module $\hPath[k](P) = \{\hPath[k](P^\delta)\}_{\delta \geq 0}$, with structure maps induced by the filtration inclusions.

For $\varepsilon \geq 0$, an \emph{$\varepsilon$-interleaving} between persistence modules $V$ and $W$ is a pair of families of maps $\varphi_\delta \colon V_\delta \to W_{\delta + \varepsilon}$ and $\psi_\delta \colon W_\delta \to V_{\delta + \varepsilon}$ commuting with the structure maps and satisfying $\psi_{\delta + \varepsilon} \circ \varphi_\delta = v_\delta^{\delta + 2\varepsilon}$ and $\varphi_{\delta + \varepsilon} \circ \psi_\delta = w_\delta^{\delta + 2\varepsilon}$.
The \emph{interleaving distance} is
\[
    d_{I}(V, W) = \inf \{ \varepsilon \geq 0 \mid V \text{ and } W \text{ are } \varepsilon\text{-interleaved} \},
\]
with the convention that $d_I(V,W) = \infty$ when no interleaving exists.

A persistence module that is pointwise finite-dimensional decomposes as a direct sum of interval modules \cite{crawley-boevey:decomposition}, and the multiset of the corresponding intervals is its \emph{persistence diagram} (or \emph{barcode}), which we denote $\dgm_k(P)$ for the degree-$k$ path homology of $P$; each interval is drawn as the point in the extended plane given by its endpoints.
The \emph{bottleneck distance} $\dbottleneck{\dgm_k(P)}{\dgm_k(Q)}$ between two diagrams is the infimum, over all partial matchings of their points, of the largest $L^\infty$ distance a matching moves a point (points may be matched to the diagonal).
The two distances are related by the isometry theorem: for pointwise finite-dimensional modules, the bottleneck distance between the diagrams equals the interleaving distance between the modules \cite{chazal-desilva-glisse-oudot:structure-stability}.
The stability results of this paper are upper bounds on the bottleneck distance between the persistence diagrams of two objects in terms of a distance between the objects themselves.
\section{The Framework} \label{sec:framework}

All the inputs to our constructions will be weighted objects: weighted path complexes, weighted (sequence) hypergraphs, and weighted digraphs.
A \emph{weight} on a set $A$ is a function $w \colon A \to \mathbb{R}_{\geq 0} \cup \{\infty\}$.
For us, the set $A$ will be given by paths in a path complex, edges in a hypergraph or a digraph, or ordered edges in a sequence hypergraph.

The stability results in the following sections all rest on a single construction that extends a weight defined on the ``generating cells'' of a combinatorial object (paths, edges, ordered edges) to all cells by covering, and the object is then filtered by sublevel sets of the extended weight.
We isolate this construction here, in a form general enough to be instantiated in each subsequent section.
The key structural fact, that the extension is idempotent, is proven once in \cref{lem:covering-idempotent} and used in every section that follows.

The construction is an abstraction of a familiar one from the theory of directed graphs.
Given a weighted digraph, the shortest-path distance between two vertices $x$ and $y$ is the least total weight of a directed path from $x$ to $y$.
Two features of this make it work.
First, a single edge is a path from its source to its target, so the distance from $x$ to $y$ is at most the weight of a direct edge, i.e., the distance never exceeds the raw weight.
Second, a path from $x$ to $y$ may be assembled from a path $x \to z$ followed by a path $z \to y$, and the shortest-path distance already accounts for every such decomposition, so recomputing shortest paths from the shortest-path distances changes nothing; in other words, the operation is idempotent.

\begin{definition} \label{def:covering-relation}
    Let $A$ be a set, whose elements we call \emph{cells}.
    A \emph{covering relation} on $A$ assigns to each cell $a \in A$ a collection of finite subsets $S \subseteq A$, called \emph{covers} of $a$, subject to two axioms:
    \begin{enumerate}
        \item \label{ax:unit} \textup{(Unit)} the singleton $\{a\}$ covers $a$;
        \item \label{ax:composition} \textup{(Composition)} if $S$ covers $a$ and, for each $s \in S$, the set $T_s$ covers $s$, then $\bigcup_{s \in S} T_s$ covers $a$.
    \end{enumerate}
\end{definition}

In our motivating example of a (weighted) digraph, the cells are pairs of vertices, and a set of edges covers $(x,y)$ when it forms a directed path from $x$ to $y$: the unit axiom says that a single edge from $x$ to $y$ is (trivially) a directed path, and the composition axiom says that the concatenation of directed paths is again a directed path.
In each application, ``$S$ covers $a$'' unfolds to a concrete connectivity-and-containment condition: path connectedness for path complexes (\cref{sec:path-cpx-stability}), edge connectedness for hypergraphs (\cref{sec:hypergraph-stability}), sequence containment for sequence hypergraphs (\cref{sec:sequence-hypergraph-stability}), and directed paths for digraphs (\cref{sec:network-stability}).
In every case a cover of $a$ is a family out of which $a$ can be reassembled, and not merely a family whose members mention the constituents of $a$; it is this that makes the composition axiom hold.
Moreover, in each case the content of verifying that it is a covering relation is precisely the composition axiom.

\begin{construction} \label{con:induced-weight}
    Let $w$ be a weight on a set $A$ with a covering relation.
    The \emph{induced weight} $\overline{w} \colon A \to \mathbb{R}_{\geq 0} \cup \{\infty\}$ is
    \[
        \overline{w}(a) = \inf\Big\{ \textstyle\sum_{s \in S} w(s) \;\Big|\; S \text{ covers } a \Big\},
        \]
    with the convention that $\inf\varnothing = \infty$.
\end{construction}

\begin{lemma} \label{lem:covering-idempotent}
    Let $w$ be a weight on a set $A$ with a covering relation, and let $\overline{w}$ be the induced weight. Then:
    \begin{enumerate}
        \item \label{item:cov-decreases} $\overline{w}(a) \leq w(a)$ for every $a \in A$;
        \item \label{item:cov-subadditive} $\overline{w}$ is covering-subadditive: $\overline{w}(a) \leq \sum_{s \in S} \overline{w}(s)$ whenever $S$ covers $a$;
        \item \label{item:cov-idempotent} $\overline{(\cdot)}$ is idempotent: $\overline{\overline{w}} = \overline{w}$.
    \end{enumerate}
    Moreover, $\overline{w}$ is the largest covering-subadditive weight below $w$.
\end{lemma}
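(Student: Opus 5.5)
The three items and the maximality claim all come straight from \cref{con:induced-weight} and the two axioms of \cref{def:covering-relation}. The proof just manipulates infima of sums in $\mathbb{R}_{\geq 0}\cup\{\infty\}$, with the usual conventions $x+\infty=\infty$ and $\inf\varnothing=\infty$.

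For item \ref{item:cov-decreases}, I apply the unit axiom. Since $\{a\}$ covers $a$, the sum $w(a)$ is one of the values over which the infimum is taken, so $\overline{w}(a)\le w(a)$.

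For item \ref{item:cov-subadditive}, fix a cover $S$ of $a$. If some $\overline{w}(s)$ is infinite, there is nothing to prove. Otherwise fix $\varepsilon>0$, and for each $s\in S$ pick a cover $T_s$ of $s$ with $\sum_{t\in T_s}w(t)\le \overline{w}(s)+\varepsilon/|S|$. This needs $S$ finite and nonempty; if $S=\varnothing$ the inequality is vacuous in the sense that the sum over $S$ of $w$ is $0$ and already bounds $\overline{w}(a)$. By the composition axiom, $T=\bigcup_s T_s$ covers $a$. Because $w\ge 0$, a sum over a union is bounded by the sum of the sums, since overlaps are only counted more often. Hence
\[
\overline{w}(a)\le \sum_{t\in T}w(t)\le \sum_{s\in S}\sum_{t\in T_s}w(t)\le \sum_{s\in S}\overline{w}(s)+\varepsilon .
\]
Letting $\varepsilon\to 0$ gives the claim. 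This is the only step with real content: it needs nonnegativity to handle the non-disjoint union, and it needs finiteness of covers to distribute $\varepsilon$.

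For item \ref{item:cov-idempotent}, I prove two inequalities.
\begin{itemize}
\item Applying item \ref{item:cov-decreases} to the weight $\overline{w}$ gives $\overline{\overline{w}}\le\overline{w}$.
\item Conversely, item \ref{item:cov-subadditive} says $\overline{w}(a)\le\sum_{s\in S}\overline{w}(s)$ for every cover $S$ of $a$. Taking the infimum over covers gives $\overline{w}\le\overline{\overline{w}}$.
\end{itemize}

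For the maximality claim, items \ref{item:cov-decreases} and \ref{item:cov-subadditive} already show that $\overline{w}$ is a covering-subadditive weight below $w$. Now let $u$ be any covering-subadditive weight with $u\le w$. For any cover $S$ of $a$, subadditivity of $u$ gives $u(a)\le\sum_{s\in S}u(s)\le\sum_{s\in S}w(s)$. Taking the infimum over $S$ gives $u\le\overline{w}$.
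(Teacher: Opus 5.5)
Your proof is correct and follows the paper's argument essentially step for step. Item 1 comes from the unit axiom, item 2 from an $\varepsilon/|S|$-approximation together with the composition axiom and nonnegativity of $w$, idempotency from combining items 1 and 2, and maximality from the same infimum argument (your extra remark on the empty cover is harmless).
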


\begin{proof}
    \Cref{item:cov-decreases} follows from the unit axiom: the singleton $\{a\}$ covers $a$, so the infimum defining $\overline{w}(a)$ is over a family containing the value $w(a)$.

    For \cref{item:cov-subadditive}, let $S$ cover $a$ and fix $\varepsilon > 0$; we may assume $\sum_{s \in S} \overline{w}(s) < \infty$.
    For each $s \in S$, choose a cover $T_s$ of $s$ with $\sum_{t \in T_s} w(t) \leq \overline{w}(s) + \varepsilon/|S|$.
    By the composition axiom \cref{ax:composition}, $T = \bigcup_{s \in S} T_s$ covers $a$, so
    \[
        \overline{w}(a) \leq \sum_{t \in T} w(t) \leq \sum_{s \in S} \sum_{t \in T_s} w(t) \leq \sum_{s \in S} \overline{w}(s) + \varepsilon.
    \]
    As $\varepsilon > 0$ was arbitrary, $\overline{w}(a) \leq \sum_{s \in S} \overline{w}(s)$.

    For \cref{item:cov-idempotent}, apply \cref{item:cov-decreases} to $\overline{w}$ to get $\overline{\overline{w}} \leq \overline{w}$.
    For the reverse inequality, let $S$ cover $a$.
    By \cref{item:cov-subadditive}, $\overline{w}(a) \leq \sum_{s \in S} \overline{w}(s)$; taking the infimum over all covers $S$ of $a$ gives $\overline{w}(a) \leq \overline{\overline{w}}(a)$.

    Finally, \cref{item:cov-subadditive,item:cov-decreases} show $\overline{w}$ is covering-subadditive and below $w$.
    If $v$ is any covering-subadditive weight with $v \leq w$, then for every cover $S$ of $a$ we have $v(a) \leq \sum_{s \in S} v(s) \leq \sum_{s \in S} w(s)$; taking the infimum gives $v(a) \leq \overline{w}(a)$.
    Thus $\overline{w}$ is the largest such weight.
\end{proof}

Every instance in this paper uses \cref{lem:covering-idempotent} in the same way: the induced weight is formed, and its sublevel sets are taken as a filtration.

\begin{construction} \label{con:generated-filtration}
    Let $A$ be a set equipped with a covering relation and let $w$ be a weight on $A$.
    For $\delta \geq 0$ put
    \[
        A^\delta = \{\, a \in A \mid \overline{w}(a) \leq \delta \,\}.
    \]
    The family $\{A^\delta\}_{\delta \geq 0}$ is increasing, and we call it the \emph{filtration generated by} $w$.
    Its entry weight is the induced weight itself:
    \[
        l(a) = \inf\{\, \delta \geq 0 \mid a \in A^\delta \,\} = \overline{w}(a).
    \]
    By \cref{lem:covering-idempotent} we have $\overline{w} \leq w$, and $\overline{w}$ is unchanged by repeating the construction.
\end{construction}

Each of the following sections applies \cref{con:generated-filtration} to its own covering relation; the only work left in each case is to check that the sublevel sets are again objects of the expected kind.

In each of those sections the set $A$ carries additional structure, and the weighted object is a pair consisting of that structure together with a weight on its cells.
For such an object $G$ with weight $w_G$, we write $\cmpl{G}$ for the same underlying structure equipped instead with the induced weight $\overline{w}_{G}$, and call $\cmpl{G}$ the \emph{completion} of $G$.
By \cref{item:cov-idempotent} of \cref{lem:covering-idempotent} the induced weight is unchanged by repeating the construction, so completion is idempotent: $\cmpl{\cmpl{G}} = \cmpl{G}$.

\subsection{Relation to existing notions} \label{subsec:coverage-literature}

Each half of this framework, the covering relation and the induced weight, is an established structure in the existing literature under a different name, although we have not found the two combined, or applied to stability proofs elsewhere.

A \emph{Grothendieck pretopology} is a way of defining covers in an arbitrary category; it assigns to every object $a$ a collection of covers, i.e., sets of maps $\{x_i \to a\}_i$, subject to axioms including that an object covers itself (our unit axiom), that covers compose (our composition axiom), and that the pullback of a cover along any map is again a cover \cite[C2.1]{johnstone2002elephant}.
Our covering relations satisfy analogues of the first two but not the third: our cells (paths and edges) have no maps to pull back along, so the pullback-stability axiom has no counterpart here.

The induced weight is likewise a familiar object.
By \cref{lem:covering-idempotent}, $\overline{w}(a)$ is the cheapest total cost of a cover of $a$.
On a digraph (\cref{sec:network-stability}), where a cover of $(a,b)$ is a directed path from $a$ to $b$, this is exactly the shortest-path distance, so $w \mapsto \overline{w}$ is the all-pairs shortest-path computation.
In this generality it is known as the \emph{algebraic path problem}, and it is carried out in the \emph{tropical}, or \emph{min-plus}, semiring $\mathbb{R}_{\geq 0} \cup \{\infty\}$, with $\inf$ for addition and $+$ for multiplication \cite{baccelli1992synchronization}.
That semiring is the setting for the generic shortest-distance algorithms of \cite{mohri2002semiring}, including Dijkstra's.
Idempotency (\cref{item:cov-idempotent}) says this computation is a closure operator: one pass already reaches the fixed point.
In the view of metric spaces as enriched categories \cite{lawvere1973metric}, $\overline{w}$ is the distance freely generated by $w$.

Two further distinctions are worth drawing.
Our covers are covers of a single cell by generating cells, not of a whole space by open sets as in the nerve-based \v{C}ech and Dowker constructions of topological data analysis \cite{bauer-kerber-roll-rolle:nerve, bauer-edelsbrunner-cech-delaunay, chowdhury-memoli:dowker}.
And our weighted objects are not the weighted simplicial complexes of \cite{dawson1990weighted, ren2018weighted}, where the weight is built into a modified boundary map; here the boundary map is the usual one, and the weight only records when each cell enters the filtration.

\subsection{Framework for stability proofs} \label{subsec:common-pattern}

\Cref{sec:path-cpx-stability} proves the key result of the paper, stability of the persistent path homology of path complexes.
\Cref{sec:hypergraph-stability,sec:sequence-hypergraph-stability,sec:network-stability} then deduce the remaining stability results from it, each by instantiating the framework above together with the results of \cref{sec:path-cpx-stability}.
To aid readability, each of these sections follows the same organizational framework:
\begin{description}
    \item[Step 1: The covering relation.] 
    Identify the covering relation on the object's cells, and verify the two axioms; the induced weight and its idempotency then come from \cref{lem:covering-idempotent}. Sublevel sets of the induced weight give a filtered object.
    \item[Step 2: The associated path complex.] 
    Define the path complex associated to the combinatorial object in question.
    \item[Step 3: The induced path weight.] 
    Compare the weight that the associated path complex already carries with the one induced on it by the path-complex covering relation.
    The two agree for digraphs, and for (sequence) hypergraphs when $q \leq 2$, in which case the weight is a fixed point in the sense of \cref{lem:covering-idempotent}.
    This step relates the two ways of passing from an object to a weighted path complex; neither Step 4 nor the stability theorem depends on it.
    \item[Step 4: Distance comparison.] Bound the path complex distance $\dpathcomplex{\,\cdot\,}{\cdot}$ by the object's own distance.
    Stability then follows by feeding Step 4 into \cref{thm:path-stability}.
\end{description}

The distance comparisons of Step 4 also share an elementary ingredient, which we isolate here.

\begin{lemma} \label{lem:sup-comparison}
    Let $I$ be a finite set, let $(a_i)_{i \in I}$ and $(b_i)_{i \in I}$ be families of real numbers, and let $c \geq 0$.
    If $|a_i - b_i| \leq c$ for every $i \in I$, then
    \[
        \Big| \sup_{i \in I} a_i - \sup_{i \in I} b_i \Big| \leq c.
    \]
\end{lemma}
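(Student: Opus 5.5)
The statement is elementary, so the plan is a direct one-sided estimate followed by symmetry. Since $I$ is finite, I will assume it is nonempty, so that both suprema are attained maxima and hence real numbers. If $I$ were empty, both suprema would equal $-\infty$, and the statement has to be read with the convention that the difference is $0$. I will note this convention explicitly and then assume $I \neq \varnothing$.

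First, I will show that $\sup_{i} a_i \leq \sup_{i} b_i + c$. Choose $i_0 \in I$ with $a_{i_0} = \max_i a_i$, which exists by finiteness. The hypothesis $|a_{i_0} - b_{i_0}| \leq c$ gives $a_{i_0} \leq b_{i_0} + c$, and clearly $b_{i_0} \leq \sup_i b_i$. Chaining these gives
\[
    \sup_i a_i = a_{i_0} \leq b_{i_0} + c \leq \sup_i b_i + c .
\]
This step does not really need the maximum to be attained. It suffices to observe that $a_i \leq b_i + c \leq \sup_j b_j + c$ for every $i$, and then take the supremum over $i$.

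Second, the hypothesis $|a_i - b_i| \leq c$ is symmetric in $a$ and $b$. Exchanging their roles therefore gives $\sup_i b_i \leq \sup_i a_i + c$. The two one-sided inequalities together say $-c \leq \sup_i a_i - \sup_i b_i \leq c$, which is exactly the claimed bound.

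There is no genuine obstacle in this argument. The only point requiring care is the degenerate case $I = \varnothing$ handled above, together with the fact that finiteness is what guarantees the suprema are finite real numbers. Without finiteness they might equal $+\infty$, and then the difference would be undefined.
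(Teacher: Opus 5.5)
Your proposal is correct and takes the same approach as the paper. Like the paper, you pick $i_0$ attaining $\sup_i a_i$, chain $a_{i_0} \leq b_{i_0} + c \leq \sup_i b_i + c$, and then exchange the roles of the two families. Your remark on the empty index set is a harmless addition that the paper does not make; in the paper's applications the empty supremum is taken to be $0$, so the bound holds trivially there.
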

\begin{proof}
    Since $I$ is finite, both suprema are attained.
    Choose $i_0$ with $\sup_i a_i = a_{i_0}$; then $\sup_i a_i = a_{i_0} \leq b_{i_0} + c \leq \sup_i b_i + c$.
    Exchanging the roles of the two families gives $\sup_i b_i \leq \sup_i a_i + c$, and the two inequalities together give the claim.
\end{proof}

\Cref{tab:instances} summarizes how the four instances fill in this framework.
Path complexes (\cref{sec:path-cpx-stability}) are the base case: there the object already \emph{is} a path complex, so Step 2 is vacuous and the covering relation acts directly on paths.
In the remaining three sections the object is a different combinatorial structure, and what changes from one to the next is entirely the first two rows, the cells and the covering relation on them, while the surrounding machinery is identical.
The contrast between hypergraphs and sequence hypergraphs is the sharpest: the cells pass from unordered to ordered subsets, and correspondingly the covering relation from order-blind edge connectedness to order-sensitive sequence containment; everything else is unchanged.

\begin{table}[H]
\renewcommand{\arraystretch}{1.4}
\begin{center}
\begin{tabular}{@{}p{0.18\textwidth}p{0.19\textwidth}p{0.24\textwidth}p{0.19\textwidth}@{}}
\toprule
& \textbf{Path complexes} & \textbf{Hypergraphs} & \textbf{Digraphs} \\
& (\cref{sec:path-cpx-stability}) & (\cref{sec:hypergraph-stability}, \cref{sec:sequence-hypergraph-stability}) & (\cref{sec:network-stability}) \\
\midrule
Cells & elementary paths & (ordered) subsets of $X$ & vertex pairs $(a,b)$ \\
Covering relation & path connectedness & edge connectedness, and sequence containment & directed paths \\
Path complex & itself & density-$q$ complex $P^q(G)$ & complex $P(G)$ \\
Object distance & $\dpathcomplex{\,\cdot\,}{\cdot}$ & $\dhyper{\cdot}{\cdot}$ / $\dDhyper{\cdot}{\cdot}$ & $\dnetwork{\cdot}{\cdot}$ \\
\bottomrule
\end{tabular}
\end{center}
\caption{The four instances of the covering framework.}
\label{tab:instances}
\end{table}
\section{Stability of Persistent Path Homology of Path Complexes} \label{sec:path-cpx-stability}
In this section we prove stability of persistent path homology of path complexes, following the general stability strategy of \cite{bubenik-milicevic, chazal-desilva-oudot:stability-geometric-complexes}.
This is the base case of the paper: the later sections reduce to it by presenting their objects as path complexes.
It is also the first instance of the covering framework of \cref{sec:framework}, with path connectedness playing the role of the covering relation.
We introduce filtered path complexes, their entry weights, and the distance between them, and then prove the stability theorem.

\subsection{Filtered path complexes}

\begin{definition} \label{def:filtered-path-complex}
    A \emph{filtered path complex} is a family $P = \{P^\delta\}_{\delta \geq 0}$ of path complexes on a fixed vertex set, together with inclusions $P^\delta \ito P^{\delta'}$ for all $\delta \leq \delta'$, such that every length-$0$ path lies in $P^0$.
    We write $P_0$ for the common vertex set, and $P_0^{<\infty}$ for the set of all elementary paths on $P_0$.
\end{definition}

A filtered path complex is equivalently described by a single function recording when each path appears.

\begin{definition} \label{def:entry-weight}
    Let $P$ be a filtered path complex.
    The \emph{entry weight} of $P$ is the function $l_P \colon P_0^{<\infty} \to \mathbb{R}_{\geq 0} \cup \{\infty\}$ given by
    \[
        l_P(p) = \inf\{\, \delta \geq 0 \mid p \in P^\delta \,\},
    \]
    the earliest stage of the filtration containing $p$, with the convention that $l_P(p) = \infty$ if $p \notin P^\delta$ for any $\delta$.
\end{definition}

Path complexes are the first instance of the covering framework of \cref{sec:framework}: the cells are the elementary paths, and the covering relation is \emph{path connectedness}, which specializes \cref{def:covering-relation} to this setting.
%The entry weight will turn out to be subadditive with respect to this relation, and it is that subadditivity on which the stability argument rests.

Throughout, a \emph{subpath} of $p = (x_0, \dots, x_n)$ means a contiguous one, that is, a path $(x_i, x_{i+1}, \dots, x_j)$ with $0 \leq i \leq j \leq n$.
These are exactly the paths that \cref{def:path-complex} forces to be allowed whenever $p$ is.
A cover of $p$ will be a family of paths out of which $p$ can be assembled by concatenation, consecutive pieces being glued along a shared stretch of vertices.
For paths $s = (y_0, \dots, y_j)$ and $t = (z_0, \dots, z_\ell)$ whose last and first $k \geq 1$ vertices agree, i.e.\ $(y_{j-k+1}, \dots, y_j) = (z_0, \dots, z_{k-1})$, we write
\[
    s \concat t = (y_0, \dots, y_j, z_k, \dots, z_\ell)
\]
for their concatenation along that overlap, and $s_1 \concat \cdots \concat s_n$ for the iterated concatenation along a specified choice of overlaps.

\begin{definition} \label{def:path-connected}
    A set $S \subseteq P_0^{<\infty}$ \emph{covers} a path $p$ if there are elements $S_1, \dots, S_n \in S$, not necessarily distinct, and subpaths $s_i$ of $S_i$, such that
    \[
        p = s_1 \concat \cdots \concat s_n
    \]
    for some choice of overlaps.
    Such an $S$ is called a \emph{path connected cover} of $p$.
\end{definition}

Everything we need about this relation follows from a single observation about how subpaths interact with concatenation.

\begin{lemma} \label{lem:window-of-concatenation}
    Let $p = s_1 \concat \cdots \concat s_n$.
    \begin{enumerate}
        \item \label{item:window-decomposes} If $u$ is a subpath of $p$, then $u = t_1 \concat \cdots \concat t_r$ for some subpaths $t_j$ of the $s_i$.
        \item \label{item:window-short} If $u$ is a subpath of $p$ on at most two vertices, then $u$ is a subpath of a single $s_i$.
    \end{enumerate}
\end{lemma}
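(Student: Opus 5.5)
The plan is to make the concatenation concrete by recording where each piece sits inside $p$. Write $p = (x_0, \dots, x_m)$. Unwinding the definition of $s \concat t$ along overlaps of size $k \geq 1$, an iterated concatenation $p = s_1 \concat \cdots \concat s_n$ amounts to a choice of index intervals $[a_i, b_i] \subseteq [0, m]$ with the following properties:
\begin{itemize}
    \item $s_i = (x_{a_i}, \dots, x_{b_i})$;
    \item $a_1 = 0$ and $b_n = m$;
    \item consecutive intervals overlap in at least one index, i.e.\ $a_{i+1} \leq b_i$;
    \item the right endpoints strictly advance, i.e.\ $b_{i+1} > b_i$.
\end{itemize}
I will first establish this by induction on $n$: the concatenation of the first $n-1$ pieces is a prefix $(x_0, \dots, x_{b_{n-1}})$, and gluing $s_n$ along an overlap of $k$ vertices places it at $[b_{n-1}-k+1, b_n]$. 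If the overlap is all of $s_n$, nothing new is added. In that case I would either discard the piece or simply allow $b_{i+1} \geq b_i$. The only property really needed is that the intervals cover $[0,m]$ with consecutive ones sharing an index, so the union is a contiguous chain.

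For \cref{item:window-decomposes}, let $u = (x_i, \dots, x_j)$. Consider the pieces whose intervals meet $[i,j]$. Because the intervals form a chain covering $[0,m]$ with overlapping neighbours, these pieces are consecutive, say those with indices $i_1 < \dots < i_r$. Set $t_\ell$ to be the restriction of $s_{i_\ell}$ to $[a_{i_\ell}, b_{i_\ell}] \cap [i,j]$. This is a nonempty interval, hence $t_\ell$ is a subpath of $s_{i_\ell}$. Consecutive restricted intervals still overlap, since the overlap of $[a_{i_\ell}, b_{i_\ell}]$ and $[a_{i_{\ell+1}}, b_{i_{\ell+1}}]$ contains an index with $a_{i_{\ell+1}} \leq b_{i_\ell}$, and I will check that this index can be taken inside $[i,j]$. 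So gluing the $t_\ell$ along these overlaps reproduces exactly $(x_i, \dots, x_j)$, which gives $u = t_1 \concat \cdots \concat t_r$.

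For \cref{item:window-short}, there are two cases. If $u$ has a single vertex $x_i$, some interval contains $i$ because the intervals cover $[0,m]$. If $u = (x_i, x_{i+1})$, let $s_k$ be the first piece with $b_k \geq i+1$. If $k = 1$, then $a_1 = 0 \leq i$, so $s_1$ contains both $x_i$ and $x_{i+1}$. Otherwise $b_{k-1} \leq i$, and the overlap condition gives $a_k \leq b_{k-1} \leq i$. Either way $[i, i+1] \subseteq [a_k, b_k]$, so $u$ is a subpath of the single piece $s_k$. The point is that adjacent vertices are never separated, because the overlaps have size at least $1$.

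I expect the main obstacle to be the bookkeeping in part (1). The delicate step is checking that the overlaps of the restricted pieces remain nonempty and lie within $[i,j]$. An overlap index of two consecutive intervals could lie outside $[i,j]$ only if one of the two intervals failed to meet $[i,j]$, so the chain structure is what rules this out. I would handle it by choosing overlap indices $c_\ell \in [a_{i_{\ell+1}}, b_{i_\ell}]$ and clamping them to $[i,j]$, then verifying that the clamped index still lies in both restricted intervals.
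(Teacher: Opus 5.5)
Your plan follows the paper's proof nearly step for step. You encode each $s_k$ as a block $[a_k,b_k]$ of positions in $p$ with $a_1=0$, $b_n=m$ and $a_{k+1}\le b_k$. For (1) you intersect the blocks meeting the window with it and check that neighbours still overlap, and for (2) you find one block containing both positions. In (2) you take the first $k$ with $b_k\ge i+1$, mirroring the paper's last $k$ with $a_k\le i$. That is fine as written, since it uses only the overlap inequality.

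The step that does not follow as you justify it is the claim in (1) that the pieces meeting $[i,j]$ have consecutive indices; the paper asserts this in the same way. Under the left-associated gluing of your induction, $s_{k+1}$ may overlap the accumulated prefix further back than $s_k$ reaches, so the left endpoints need not be monotone. For example, take $p=(x_0,\dots,x_7)$ and glue $s_1=(x_0,\dots,x_5)$, $s_2=(x_4,x_5,x_6)$, $s_3=(x_1,\dots,x_7)$ along overlaps of $2$ and then $6$ vertices. The blocks are $[0,5]$, $[4,6]$, $[1,7]$, and all four of your listed properties hold (even $b_{k+1}>b_k$). Yet the window $[2,2]$ meets $s_1$ and $s_3$ but not $s_2$. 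Consecutiveness is exactly what licenses $a_{i_{\ell+1}}\le b_{i_\ell}$, and your clamping check also presumes $a_{i_\ell}\le a_{i_{\ell+1}}$. (If the iterated concatenation is read as gluing each $s_{k+1}$ to $s_k$ itself, monotone left endpoints are automatic, but that is not the reading your induction sets up.)

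The repair is short, and either of two patches works.
\begin{itemize}
\item Normalize first: replace $s_k$ by its subpath on $[a'_k,b_k]$ with $a'_k=\max(a_1,\dots,a_k)$. Since the $b_k$ are nondecreasing, this keeps $a'_k\le b_k$, $a'_1=0$ and $a'_{k+1}\le b_k$, and makes both endpoint sequences monotone. Subpaths of the trimmed pieces are still subpaths of the $s_k$, so your chain argument and clamping then go through verbatim.
\item Handle a jump directly. Suppose $i_{\ell+1}>i_\ell+1$ at the first such jump, and let $m=i_\ell+1$. The block of $s_m$ misses $[i,j]$ even though $b_m\ge b_{i_\ell}\ge i$, so $a_m>j$. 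Hence $b_{i_\ell}\ge a_m>j$, and $t_\ell$ already ends at $x_j$. So $t_1\concat\cdots\concat t_\ell=u$ and you can stop there.
\end{itemize}
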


\begin{proof}
    Index the vertices of $p$ by positions $0, 1, \dots, N$, so that each $s_i$ is the subpath of $p$ on a block of consecutive positions $B_i = [a_i, b_i]$, where $a_1 = 0$, $b_n = N$, and $a_{i+1} \leq b_i$ for every $i < n$, the last inequality expressing that consecutive pieces share at least one vertex.
    Let $u$ occupy the positions $[c,d]$.

    For \cref{item:window-decomposes}, let $i_1 < \cdots < i_r$ enumerate those $i$ with $B_i \cap [c,d] \neq \varnothing$; these indices are consecutive, since the $B_i$ are consecutive blocks covering $[0,N]$.
    Put $t_j$ equal to the subpath of $p$ on $B_{i_j} \cap [c,d]$, which is a subpath of $s_{i_j}$.
    For consecutive $j$ the sets $B_{i_j} \cap [c,d]$ and $B_{i_j + 1} \cap [c,d]$ meet, because
    \[
        \max(a_{i_j+1},\, c) \leq \min(b_{i_j},\, d)
    \]
    follows from the four inequalities $a_{i_j+1} \leq b_{i_j}$, $a_{i_j+1} \leq d$, $c \leq b_{i_j}$ and $c \leq d$.
    Hence $u = t_1 \concat \cdots \concat t_r$ along these overlaps.

    For \cref{item:window-short} we have $d \leq c+1$.
    Let $j$ be the largest index with $a_j \leq c$, which exists as $a_1 = 0$.
    Then $c \leq b_j$: this is clear when $j = n$ because $b_n = N \geq c$, and otherwise $b_j < c$ would give $a_{j+1} \leq b_j < c$, contradicting maximality.
    If $b_j \geq d$ then $[c,d] \subseteq B_j$ and we are done.
    Otherwise $b_j = c < d$, so $a_{j+1} \leq b_j = c$, contradicting maximality unless $j = n$; and if $j = n$ then $b_n = N \geq d$.
\end{proof}

For instance, in the path complex $P$ induced by the digraph
    \begin{center}
    \begin{tikzpicture}[
        vertex/.style = {circle, fill=black, minimum size = 5pt, outer sep = 5 pt, inner sep = 0pt}
        ]
        \node (A) at (-1.5,0) [vertex, label=above:$0$] {};
        \node (B) at (0,0) [vertex, label=above:$1$] {};
        \node (C) at (1.5,0) [vertex, label=above:$2$] {};

        \draw[->,thick] (A) -- (B);
        \draw[->,thick] (B) -- (C);
    \end{tikzpicture}
    \end{center}
the subset $S = \{(0,1),(1,2)\}$ covers the path $(0,1,2)$, since $(0,1,2) = (0,1) \concat (1,2)$.
It does not cover $(0,2)$: no subpath of $(0,1)$ or of $(1,2)$ has $0$ and $2$ adjacent, and by \cref{item:window-short} of \cref{lem:window-of-concatenation} a two-vertex path is never assembled from more than one piece.
Nor does $\{(2,1,0)\}$ cover $(0,1,2)$.
Both failures are as they should be: covering sees the adjacencies and the order of $p$, and not merely its vertices.

\begin{lemma} \label{lem:path-covering-relation}
    \Cref{def:path-connected} is a covering relation on $P_0^{<\infty}$ in the sense of \cref{def:covering-relation}.
    Moreover, if $S$ covers $p$, then $S$ covers every subpath of $p$.
\end{lemma}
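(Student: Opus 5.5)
We verify the two axioms of \cref{def:covering-relation} for \cref{def:path-connected} and then prove the subpath claim. The whole argument rests on \cref{item:window-decomposes} of \cref{lem:window-of-concatenation}.

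\textbf{Unit axiom.} Take $n = 1$, $S_1 = p$ and $s_1 = p$ (the whole path is a subpath of itself). Then $p = s_1$ is a trivial concatenation, so $\{p\}$ covers $p$.

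\textbf{Subpath claim.} This is best done next, since composition uses it. Suppose $S$ covers $p$ via $p = s_1 \concat \cdots \concat s_n$, where each $s_i$ is a subpath of some $S_i \in S$. Let $u$ be a subpath of $p$. By \cref{item:window-decomposes} of \cref{lem:window-of-concatenation}, $u = t_1 \concat \cdots \concat t_r$, where each $t_j$ is a subpath of some $s_i$. A subpath of a subpath is again a subpath, since contiguous windows of contiguous windows are contiguous. Hence each $t_j$ is a subpath of an element of $S$, and $S$ covers $u$.

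\textbf{Composition axiom.} Let $S$ cover $p$ via $p = s_1 \concat \cdots \concat s_n$ with $s_i$ a subpath of $S_i \in S$. For each $s \in S$ let $T_s$ cover $s$, and put $T = \bigcup_{s\in S} T_s$. The key step is to refactor each piece:
\begin{enumerate}
  \item Since $T_{S_i}$ covers $S_i$ and $s_i$ is a subpath of $S_i$, the subpath claim shows that $T_{S_i}$ covers $s_i$.
  \item So $s_i = t_{i,1} \concat \cdots \concat t_{i,r_i}$, with each $t_{i,j}$ a subpath of an element of $T_{S_i} \subseteq T$.
  \item Substituting these expressions into $p = s_1 \concat \cdots \concat s_n$ gives $p = t_{1,1} \concat \cdots \concat t_{n,r_n}$, and therefore $T$ covers $p$.
\end{enumerate}

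\textbf{Expected obstacle.} The one point needing care is step 3: checking that the substituted expression is a valid iterated concatenation along overlaps of at least one vertex. Positionally, $s_i$ occupies a block $B_i=[a_i,b_i]$ of positions of $p$, and the pieces $t_{i,j}$ occupy consecutive overlapping sub-blocks that cover $B_i$. The last sub-block of $B_i$ ends at $b_i$ and the first sub-block of $B_{i+1}$ starts at $a_{i+1}$. Since $a_{i+1}\le b_i$, these two sub-blocks share at least the positions $[a_{i+1},b_i]$, which is nonempty. Consecutive pieces therefore overlap in at least one vertex, and the vertices they share agree because both are read off from $p$ itself.

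One subtlety remains: the sub-blocks are not nested monotonically, because a later $t$ could begin before an earlier one ends. The concatenation convention $s\concat t$ allows any overlap $k\ge 1$, but we must still ensure that each piece extends past the previous one. To secure this, I would phrase the concatenation entirely positionally, as in the proof of \cref{lem:window-of-concatenation}. That is, I would define a cover as a sequence of blocks $[a_m,b_m]$ with $a_1=0$, $b_{\mathrm{last}}=N$, $a_{m+1}\le b_m$, and (after discarding redundant blocks) $a_m$ and $b_m$ increasing. Redundant blocks contained in a neighbour can simply be dropped, and the remaining family still witnesses the cover.
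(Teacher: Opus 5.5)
Your proof is correct and follows the paper's own route: the unit axiom via $n=1$, the subpath claim from part (1) of \cref{lem:window-of-concatenation}, and composition by rewriting each $s_i$ as a concatenation of subpaths of elements of $T_{S_i}$ and substituting. The paper applies the window lemma inside $S_i$ directly rather than quoting the subpath claim, which amounts to the same thing, and it does not spell out the substitution step at all.

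One correction to your obstacle paragraph. The last sub-block of $B_i$ and the first sub-block of $B_{i+1}$ need not share $[a_{i+1},b_i]$, nor even a single position. For example, take $B_i=[0,5]$ split as $[0,3],[3,5]$ and $B_{i+1}=[1,6]$ split as $[1,2],[2,6]$. So it is your second remedy, discarding redundant blocks, that makes the substitution valid. Here ``redundant'' should mean a block that does not reach beyond the largest right endpoint seen so far (in the example, $[1,2]$), not a block contained in a neighbour.
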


\begin{proof}
    For the unit axiom, take $n = 1$, $S_1 = p$ and $s_1 = p$.

    For the composition axiom, suppose $S$ covers $p$, say $p = s_1 \concat \cdots \concat s_n$ with $s_i$ a subpath of $S_i \in S$, and suppose each $s \in S$ is covered by $T_s$.
    Fix $i$ and write $S_i = u_{i,1} \concat \cdots \concat u_{i,m_i}$ with $u_{i,k}$ a subpath of some element of $T_{S_i}$.
    Since $s_i$ is a subpath of $S_i$, \cref{item:window-decomposes} of \cref{lem:window-of-concatenation} presents $s_i$ as a concatenation of subpaths of the $u_{i,k}$, hence of subpaths of elements of $T_{S_i}$, a subpath of a subpath being a subpath.
    Concatenating these presentations over $i = 1, \dots, n$ exhibits $p$ as a concatenation of subpaths of elements of $\bigcup_{s \in S} T_s$, so that union covers $p$.

    For the last claim, let $u$ be a subpath of $p$.
    By \cref{item:window-decomposes} of \cref{lem:window-of-concatenation}, $u$ is a concatenation of subpaths of the $s_i$, hence of subpaths of the $S_i$, so $S$ covers $u$.
\end{proof}

We now record the properties of the entry weight used throughout.

\begin{lemma} \label{lem:entry-weight}
    Let $P$ be a filtered path complex. Then:
    \begin{enumerate}
        \item \label{item:entry-vertices} $l_P(x) = 0$ for every $x \in P_0$;
        \item \label{item:entry-sublevel} $\{\, p \mid l_P(p) < \delta \,\} \subseteq P^\delta \subseteq \{\, p \mid l_P(p) \leq \delta \,\}$ for every $\delta \geq 0$; in particular $l_P(p) < \delta$ implies $p \in P^\delta$;
        \item \label{item:entry-monotone} if $q$ is a subpath of $p$, then $l_P(q) \leq l_P(p)$.
    \end{enumerate}
\end{lemma}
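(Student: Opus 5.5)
All three items follow directly from \cref{def:filtered-path-complex} and \cref{def:entry-weight}, so I would take them in turn, using only that $l_P(p)$ is the infimum of $D_p = \{\delta \geq 0 \mid p \in P^\delta\}$ and that $D_p$ is upward closed, since the inclusions $P^\delta \ito P^{\delta'}$ hold for $\delta \leq \delta'$.

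For \cref{item:entry-vertices}, \cref{def:filtered-path-complex} puts every length-$0$ path $(x)$ in $P^0$. Hence $0 \in D_{(x)}$, so $l_P(x) \leq 0$, and weights are nonnegative, which gives $l_P(x) = 0$.

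For \cref{item:entry-sublevel}, first take $p \in P^\delta$. Then $\delta \in D_p$, so $l_P(p) \leq \delta$, which is the right-hand inclusion. For the left-hand inclusion, suppose $l_P(p) < \delta$. By the definition of infimum there is some $\delta' \in D_p$ with $\delta' < \delta$. Upward closure then gives $p \in P^{\delta'} \subseteq P^\delta$. The only point needing care is that $l_P(p) = \infty$ never satisfies $l_P(p) < \delta$, and the case $D_p = \varnothing$ causes no trouble.

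For \cref{item:entry-monotone}, let $q$ be a subpath of $p$. If $l_P(p) = \infty$ there is nothing to prove. Otherwise, for each $\delta \in D_p$, the path $p$ lies in the path complex $P^\delta$. Applying condition (2) of \cref{def:path-complex} repeatedly, deleting first or last vertices, shows that every contiguous subpath of $p$ lies in $P^\delta$, so $q \in P^\delta$. Thus $D_p \subseteq D_q$, and taking infima gives $l_P(q) \leq l_P(p)$.

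I expect no real obstacle here. The one step worth stating carefully is the induction showing that contiguous subpaths are reached by iterated end-deletions, and the remark after the definition of subpath already notes that this holds.
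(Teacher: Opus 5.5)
Your proposal is correct and follows the paper's proof essentially step for step. The one small difference is in item (3). You show $D_p \subseteq D_q$ directly and compare infima. The paper instead uses item (2) to place $p$ in $P^\delta$ for every $\delta > l_P(p)$, concludes $l_P(q) \leq \delta$, and then lets $\delta \downarrow l_P(p)$. Your version is a slight streamlining that avoids the limiting step.
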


\begin{proof}
    \Cref{item:entry-vertices} holds because every length-$0$ path lies in $P^0$, so $l_P(x) = 0$.

    For \cref{item:entry-sublevel}, the right-hand inclusion is immediate: if $p \in P^\delta$ then $l_P(p) \leq \delta$ by definition of the infimum.
    For the left-hand inclusion, suppose $l_P(p) < \delta$.
    Then, by definition of $l_P$ as an infimum, there is some $\delta'' < \delta$ with $p \in P^{\delta''}$, and since $P^{\delta''} \subseteq P^\delta$ by nesting, $p \in P^\delta$.
    (Note that we do not necessarily have $P^\delta = \{p \mid l_P(p) \leq \delta\}$: equality can fail when the filtration is not a sublevel filtration, for instance if $p$ lies in $P^{\delta'}$ exactly for $\delta' > \delta$. It does hold for the generated filtrations below, but is not needed here.)

    For \cref{item:entry-monotone}, we may assume $l_P(p) < \infty$.
    For every $\delta > l_P(p)$ we have $p \in P^\delta$ by \cref{item:entry-sublevel}, and since $P^\delta$ is a path complex, every subpath $q$ of $p$ lies in $P^\delta$, so $l_P(q) \leq \delta$.
    Letting $\delta \downarrow l_P(p)$ gives $l_P(q) \leq l_P(p)$.
\end{proof}

Conspicuously absent from \cref{lem:entry-weight} is subadditivity of $l_P$ along covers.
That property does hold for the filtrations generated by a weight, which are the ones the later sections use, and we record it in \cref{lem:generated-subadditive} below; it fails for filtered path complexes in general.

\begin{remark} \label{rmk:no-general-subadditivity}
    The obstruction is that a path complex need not be closed under concatenation: \cref{def:path-complex} closes allowed paths under deleting an endpoint, and imposes nothing about gluing them back together.
    For a filtered instance, let
    \[
        P^\delta = \{0,1,2,(0,1),(1,2)\} \quad (\delta < 1), \qquad P^\delta = \{0,1,2,(0,1),(1,2),(0,1,2)\} \quad (\delta \geq 1).
    \]
    Both stages are path complexes, so this is a filtered path complex, and $\{(0,1),(1,2)\}$ covers $(0,1,2)$, since $(0,1,2) = (0,1) \concat (1,2)$.
    Yet $l_P(0,1,2) = 1$ while $l_P(0,1) = l_P(1,2) = 0$.
    What survives in general is only the weaker statement supplied by \cref{item:entry-monotone}: if $p$ is a subpath of some element of $S$, then $l_P(p) \leq \sum_{s \in S} l_P(s)$, the entry weights being nonnegative.
\end{remark}

\subsection{Weighted path complexes as examples} \label{subsec:weighted-examples}

%Filtered path complexes typically arise from weighted path complexes.
The prototypical example of a filtered path complex is given by a weight function defined on the paths of a path complex, which we extend to all paths by covering and then take sublevel sets.
%This construction recurs throughout the paper, so we record it once here and reuse it, in \cref{sec:hypergraph-stability,sec:sequence-hypergraph-stability,sec:network-stability}, for hypergraphs and digraphs.

\begin{definition} \label{def:weighted-path-complex}
    A \emph{weighted path complex} is a pair $(P, w)$ where $P$ is a path complex and $w \colon P \to \mathbb{R}_{\geq 0} \cup \{\infty\}$ satisfies $w(x) = 0$ for every $x \in P_0$.
\end{definition}

Applying \cref{con:generated-filtration} to the path-connected covering relation of \cref{def:path-connected} gives the induced weight
\[
    \overline{w}(p) = \inf\Big\{ \textstyle\sum_{s \in S} w(s) \ \Big\vert \  S \subseteq P \text{ a path connected cover of } p \Big\},
\]
with the convention $\inf \varnothing = \infty$, and the filtration generated by $(P,w)$, with entry weight $l_P = \overline{w}$.
The one point special to this setting is that each $P^\delta$ is a path complex: by the last clause of \cref{lem:path-covering-relation}, any cover of $p$ also covers each subpath of $p$, so $\overline{w}$ is monotone under subpaths and every subpath of $p$ again lies in $P^\delta$.

For these generated filtrations the subadditivity missing from \cref{lem:entry-weight} is available.
It is not needed for \cref{thm:path-stability}, whose proof uses only \cref{item:entry-sublevel}; we record it because it is the correct general form of the property, and because the same argument recurs in Step 3 of \cref{sec:hypergraph-stability,sec:sequence-hypergraph-stability,sec:network-stability} for the weights appearing there.

\begin{lemma} \label{lem:generated-subadditive}
    Let $P$ be the filtered path complex generated by a weighted path complex $(P,w)$.
    Then $l_P(p) \leq \sum_{s \in S} l_P(s)$ for every $S$ covering $p$.
\end{lemma}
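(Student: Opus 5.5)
The plan is to reduce the claim to covering-subadditivity of the induced weight, item (2) of \cref{lem:covering-idempotent}. The first step is to identify $l_P$ with $\overline{w}$ on all of $P_0^{<\infty}$. By \cref{con:generated-filtration}, $P^\delta = \{p \mid \overline{w}(p) \leq \delta\}$, so $l_P(p) = \inf\{\delta \geq 0 \mid \overline{w}(p) \leq \delta\}$. This equals $\overline{w}(p)$ when that value is finite, and it is $\infty$ (empty infimum) when $\overline{w}(p) = \infty$. Hence $l_P = \overline{w}$ identically.

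The second step is to place ourselves inside the covering framework. By \cref{lem:path-covering-relation}, path connectedness is a covering relation on $P_0^{<\infty}$. We extend $w$ to all of $P_0^{<\infty}$ by setting $w = \infty$ on non-allowed paths. The infimum defining $\overline{w}$ with covers restricted to $S \subseteq P$ then agrees with the unrestricted infimum of \cref{con:induced-weight}. Indeed, any cover meeting a non-allowed path has total weight $\infty$ and does not lower the infimum. The only subtlety is covers with infinitely many terms, but covers are finite sets by \cref{def:covering-relation}, so nothing goes wrong.

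With this identification in hand, \cref{item:cov-subadditive} of \cref{lem:covering-idempotent} gives $\overline{w}(p) \leq \sum_{s \in S} \overline{w}(s)$ for every $S$ covering $p$. Rewriting both sides via $l_P = \overline{w}$ yields exactly the statement of \cref{lem:generated-subadditive}.

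The main obstacle, such as it is, is bookkeeping. One has to check that the restriction to $S \subseteq P$ in the displayed formula for $\overline{w}$ matches the abstract \cref{con:induced-weight} once $w$ is extended by $\infty$. One also has to handle the infinite values, which use the convention $\inf\varnothing=\infty$ and the fact that the sums involved are in $\mathbb{R}_{\geq 0}\cup\{\infty\}$. If one prefers to avoid the extension, one can instead directly repeat the $\varepsilon$-argument of \cref{lem:covering-idempotent}. Concretely, choose near-optimal covers $T_s \subseteq P$ of each $s \in S$ and concatenate them via the composition axiom of \cref{lem:path-covering-relation}. The union $\bigcup_s T_s$ still lies in $P$.
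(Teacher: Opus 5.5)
Your proof is correct and follows the paper's argument: \cref{lem:path-covering-relation} makes path connectedness a covering relation, \cref{con:generated-filtration} identifies $l_P$ with $\overline{w}$, and covering-subadditivity is then \cref{item:cov-subadditive} of \cref{lem:covering-idempotent}. Your extra bookkeeping is correct and fills in a point the paper leaves implicit, namely extending $w$ by $\infty$ off $P$ so that the restricted infimum over $S \subseteq P$ agrees with \cref{con:induced-weight}, including for covers that contain non-allowed paths.
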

\begin{proof}
    By \cref{lem:path-covering-relation}, \cref{def:path-connected} is a covering relation, so \cref{lem:covering-idempotent} applies.
    By \cref{con:generated-filtration} the entry weight is the induced weight, $l_P = \overline{w}$, which is covering-subadditive by \cref{item:cov-subadditive} of \cref{lem:covering-idempotent}.
\end{proof}

The induced weight need not agree with $w$: if $p$ admits a path connected cover $S$ with $\sum_{s \in S} w(s) < w(p)$, then $\overline{w}(p) < w(p)$.
It replaces $w$ by the cheapest cost of assembling a path from covering pieces, which is what makes the sublevel sets path complexes.
When no confusion arises, we denote the generated filtered path complex again by $P$ and its entry weight by $\overline{w} = l_P$.

\begin{remark} \label{rmk:not-injective}
    Distinct weighted path complexes may generate the same filtered path complex.
    Indeed, by idempotency the pairs $(P, w)$ and $(P_0^{<\infty}, \overline{w})$ always do.
    Consequently, no invariant of the generated filtered path complex, including the distance of \cref{def:path-distance} and the persistence modules of \cref{subsec:path-stability}, can distinguish two weighted path complexes with the same induced weight.
    This is the reason the distance below is only a pseudometric.
\end{remark}

\subsection{Correspondences and the path complex distance}

\newcommand{\Corr}{\mathrm{Corr}}

Our goal is to define a version of the interleaving distance between filtered path complexes, following the approach outlined in \cite{chazal-desilva-oudot:stability-geometric-complexes}.
To do so, we briefly recall the standard material on multivalued maps and correspondences.
First, recall that a multivalued map $C \colon A \rightrightarrows B$ between sets is a subset $C \subseteq A \times B$ such that $\pi_1(C) = A$, i.e., the image of $C$ under the projection onto $A$ is $A$.
The transpose of a multivalued map $C \colon A \rightrightarrows B$ is a subset $C^T \subseteq B \times A$ given by $C^T = \{ (b, a) \in B \times A \mid (a, b) \in C \}$.
A multivalued map $C$ is a correspondence if $\pi_2 (C) = B$, or, equivalently, if $C^T$ is again a multivalued map.
The set of all correspondences between $A$ and $B$ is denoted $\Corr(A, B)$.
A map $f \colon A \to B$ is subordinate to $C$ if $(a,f(a))\in C$ for all $a \in A$.
An \emph{ordered lift} through a correspondence $C \colon A \rightrightarrows B$ is a sequence $\sigma = ((a_0,b_0),\dots,(a_n,b_n))$ such that $(a_i,b_i) \in C$ for all $i$; we write $\pi_1(\sigma) = (a_0, \dots, a_n)$ and $\pi_2(\sigma) = (b_0,\dots,b_n)$ for its projections.
In what follows, these notions are applied with $A$ and $B$ the vertex sets of two path complexes.

\begin{definition} \label{def:path-distance}
Let $P$ and $Q$ be filtered path complexes.
\begin{itemize}
    \item The \emph{path distortion} $\dispath(C)$ of a correspondence $C \colon P_0 \rightrightarrows Q_0$ is given by
\[
\dispath(C) = \sup \Big\{ \big|l_P(\pi_1(\sigma)) - l_Q(\pi_2(\sigma)) \big| \ \Big| \ \sigma \text{ is an ordered lift through } C \Big\} \text{.}
\]
    \item The \emph{path complex distance} between $P$ and $Q$ is given by
\[
\dpathcomplex{P}{Q} = \frac{1}{2} \inf \Big\{ \dispath(C) \ \Big| \ C \in \Corr(P_0,Q_0) \Big\} \text{.}
\]
\end{itemize}
\end{definition}

Since $l_P$ is defined on all elementary paths, the expression above makes sense for every ordered lift, and it takes values in $\mathbb{R}_{\geq 0} \cup \{\infty\}$.
For weighted path complexes $(P,w_P)$ and $(Q,w_Q)$, we let $\dpathcomplex{P}{Q}$ denote the distance between the associated filtered path complexes.%; equivalently, it is computed from the completed weights $\overline{w}_{P}$ and $\overline{w}_{Q}$.

Our goal is to prove stability of persistent path homology, that is, that the interleaving distance between path homologies of $P$ and $Q$ is bounded above by their path complex distance.
Before doing this, we prove that the path complex distance defines an extended pseudometric.

\begin{theorem} \label{thm:path-distance-pseudometric}
   The path complex distance $\dpathcomplex{\cdot}{\cdot}$ is an extended pseudometric on filtered path complexes.
\end{theorem}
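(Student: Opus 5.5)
We check the three axioms of an extended pseudometric: nonnegativity with $\dpathcomplex{P}{P} = 0$, symmetry, and the triangle inequality. Values in $\mathbb{R}_{\geq 0} \cup \{\infty\}$ are automatic, since $\dispath(C)$ is a supremum of absolute values. Nothing beyond \cref{def:path-distance} and elementary facts about correspondences should be needed; in particular, no property of the entry weights is used beyond their being functions on all elementary paths.

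For reflexivity, take the diagonal correspondence $\Delta = \{(x,x) \mid x \in P_0\}$. Any ordered lift $\sigma$ through $\Delta$ has $\pi_1(\sigma) = \pi_2(\sigma)$, so every term in the supremum vanishes. Hence $\dispath(\Delta) = 0$ and $\dpathcomplex{P}{P} = 0$.

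For symmetry, note that $C \mapsto C^T$ is a bijection $\Corr(P_0,Q_0) \to \Corr(Q_0,P_0)$. Ordered lifts through $C^T$ are exactly the coordinate swaps of ordered lifts through $C$, and swapping interchanges $\pi_1$ and $\pi_2$. The absolute value is symmetric, so $\dispath(C^T) = \dispath(C)$, and taking infima gives $\dpathcomplex{P}{Q} = \dpathcomplex{Q}{P}$.

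The triangle inequality is the only step with content. Given $C \in \Corr(P_0,Q_0)$ and $D \in \Corr(Q_0,R_0)$, form the composite
\[
D \circ C = \{(x,z) \mid \exists\, y \ (x,y) \in C,\ (y,z) \in D\}.
\]
This is a correspondence: $\pi_1(D\circ C) = P_0$ because each $x$ has some $y$ with $(x,y) \in C$, and each such $y$ has some $z$ with $(y,z) \in D$; the argument for $\pi_2$ is symmetric. The key step is to lift ordered lifts through the composite. Let $\sigma = ((x_0,z_0),\dots,(x_n,z_n))$ be an ordered lift through $D \circ C$. For each $i$, choose a witness $y_i$ with $(x_i,y_i) \in C$ and $(y_i,z_i) \in D$; the choices are independent at each position, which is exactly what makes this work vertexwise. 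This yields an ordered lift $\sigma_C$ through $C$ with projections $(x_0,\dots,x_n)$ and $(y_0,\dots,y_n)$, and an ordered lift $\sigma_D$ through $D$ with projections $(y_0,\dots,y_n)$ and $(z_0,\dots,z_n)$. The ordinary triangle inequality on $\mathbb{R}_{\geq 0} \cup \{\infty\}$ then gives
\[
|l_P(x_\bullet) - l_R(z_\bullet)| \leq |l_P(x_\bullet) - l_Q(y_\bullet)| + |l_Q(y_\bullet) - l_R(z_\bullet)| \leq \dispath(C) + \dispath(D).
\]
Taking the supremum over $\sigma$ yields $\dispath(D \circ C) \leq \dispath(C) + \dispath(D)$, and taking infima over $C$ and $D$ and halving proves the triangle inequality.

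The one point needing care is arithmetic with $\infty$, since entry weights may be infinite and $|\infty - \infty|$ is not defined. I would fix the convention $|\infty - \infty| = 0$ and $|\infty - a| = \infty$ for finite $a$. Under this convention the triangle inequality still holds: if $l_P(x_\bullet)$ and $l_R(z_\bullet)$ are both infinite the left side is $0$, and if exactly one is infinite then one of the two terms on the right is infinite. I expect making this convention explicit to be the only real obstacle. Finally, the distance is only a pseudometric, not a metric: by \cref{rmk:not-injective}, distinct weighted path complexes can generate the same filtered path complex and so have distance $0$.
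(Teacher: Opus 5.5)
Your proof is correct and follows the paper's argument essentially step for step. You use the diagonal correspondence to get $\dpathcomplex{P}{P}=0$, transposition for symmetry, and, for the triangle inequality, a vertexwise choice of witnesses $y_i$ in the composite $D \circ C$ to split each ordered lift into lifts through $C$ and through $D$. Your explicit check that $D \circ C$ is again a correspondence, and your convention $|\infty - \infty| = 0$ (which is also what makes $\dispath(\Delta) = 0$ when some entry weights are infinite), fill in details that the paper's proof leaves implicit.
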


\begin{proof}
    Nonnegativity is immediate, since $\dispath(C) \ge 0$, and symmetry follows from $\dispath(C^T) = \dispath(C)$.
    If $P = Q$, then the identity correspondence has distortion $0$, so $\dpathcomplex{P}{Q} = 0$.
    It remains to prove the triangle inequality.

    Let $P$, $Q$, $R$ be filtered path complexes.
    We want to show that
    $\dpathcomplex{P}{R} \leq \dpathcomplex{P}{Q} + \dpathcomplex{Q}{R}$.
    Let $C \colon P_0 \rightrightarrows Q_0$ and $D \colon Q_0 \rightrightarrows R_0$ be correspondences, with composite $D \circ C \colon P_0 \rightrightarrows R_0$.
    It suffices to show
    \[
    \dispath(D \circ C) \leq \dispath(C) + \dispath(D).
    \]
    Let $\sigma = ((x_0,z_0),\dots,(x_n,z_n))$ be an ordered lift through $D \circ C$.
    By definition of $D \circ C$, for all $i$ there exists $y_i \in Q_0$ such that $(x_i,y_i) \in C$ and $(y_i,z_i) \in D$.
    So, we get ordered lifts $\sigma_C = ((x_0,y_0), \dots,(x_n,y_n))$ and $\sigma_D = ((y_0,z_0),\dots,(y_n,z_n))$.
    Let $p = (x_0,\dots,x_n)$, $q=(y_0,\dots,y_n)$, and $r =(z_0,\dots,z_n)$. Then by the usual triangle inequality over $\mathbb{R}$,
    \[
    | l_P(p) - l_R(r)| \leq |l_P(p) - l_Q(q)| + |l_Q(q) - l_R(r)|.
    \]
    Since $\sigma_C$ and $\sigma_D$ are ordered lifts,
    $|l_P(p) - l_Q(q)| \leq \dispath(C)$ and $|l_Q(q) - l_R(r)| \leq \dispath(D)$.
    Hence
    \[
     | l_P(p) - l_R(r)| \leq \dispath(C) + \dispath(D).
    \]
    Since $\sigma$ was an arbitrary ordered lift, we can take the supremum over all such $\sigma$ to get
    \[
    \dispath(D \circ C) \leq \dispath(C) + \dispath(D).
    \]
    Finally, we can take the infimum over all correspondences $C$ and $D$ to get
    \[
    \dpathcomplex{P}{R} \leq \dpathcomplex{P}{Q} + \dpathcomplex{Q}{R}.
    \]
    So the triangle inequality holds, and $\dpathcomplex{\cdot}{\cdot}$ is an extended pseudometric.
\end{proof}

Note that $\dpathcomplex{P}{Q} = 0$ does not imply $P = Q$; by \cref{rmk:not-injective}, weighted path complexes with the same induced weight generate the same filtered path complex.

\subsection{Stability} \label{subsec:path-stability}

Now that we have an extended pseudometric on filtered path complexes, we would like to prove stability.
%To do this, we follow the framework of \cite{chazal-desilva-oudot:stability-geometric-complexes} by proving the following propositions.
This is established in a series of lemmas and propositions, exactly as in \cite{chazal-desilva-oudot:stability-geometric-complexes}.

\begin{definition} \label{def:path-mvm}
    Let $P = \{ P^\delta\}_{\delta \geq0}$ and $Q = \{ Q^\delta\}_{\delta \geq0}$ be filtered path complexes.
    \begin{itemize}
        \item A multivalued map $C \colon P_0 \rightrightarrows Q_0$ is an \emph{$\varepsilon$-path-multivalued map} from $P$ to $Q$ if for every $\delta \geq 0$, every path $(x_0,\dots,x_k) \in P^\delta$, and every choice of vertices $y_i \in C(x_i)$, the sequence $(y_0,\dots,y_k)$ belongs to $Q^{\delta + \varepsilon}$.
        \item A correspondence $C \colon P_0 \rightrightarrows Q_0$ is an \emph{$\varepsilon$-path-correspondence} if both $C$ and $C^T$ are $\varepsilon$-path-multivalued maps.
    \end{itemize}
\end{definition}

\begin{lemma} \label{lem:subordinate-length-one}
Let $C \colon P_0 \rightrightarrows Q_0$ be an $\varepsilon$-path-multivalued map from $P$ to $Q$.
If $f_1,f_2 \colon P_0 \to Q_0$ are two subordinate maps of $C$, then for every $x \in P_0$ with $(x) \in P^\delta$ we have $(f_1(x), f_2(x)) \in Q^{\delta + \varepsilon}$.
\end{lemma}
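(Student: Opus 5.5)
The idea is to apply the definition of an $\varepsilon$-path-multivalued map (\cref{def:path-mvm}) to a suitable path in $P^\delta$. For this we need a path $(x_0,\dots,x_k) \in P^\delta$ together with vertices $y_i \in C(x_i)$ whose image sequence is exactly $(f_1(x), f_2(x))$.

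The natural choice is the irregular $1$-path $(x,x)$ with $y_0 = f_1(x)$ and $y_1 = f_2(x)$. Both choices are legitimate, since subordinacy gives $f_1(x), f_2(x) \in C(x)$. The conclusion $(f_1(x),f_2(x)) \in Q^{\delta+\varepsilon}$ would then follow immediately from the definition.

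The only point needing care is whether $(x,x)$ lies in $P^\delta$. \Cref{def:path-complex} does not force degenerate paths to be allowed: it only requires the $0$-paths $(x)$ and closure under deleting endpoints. So I cannot use the $1$-path $(x,x)$ unless the paper's conventions supply it. If they do not, I would instead read the definition of an $\varepsilon$-path-multivalued map at $k=0$.

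For $k = 0$ the path $(x) \in P^\delta$ and a choice $y_0 \in C(x)$ only yield the $0$-paths $(f_1(x))$ and $(f_2(x))$ in $Q^{\delta+\varepsilon}$. This is too weak. So the intended argument must use the $1$-path $(x,x)$, where the definition of $\varepsilon$-path-multivalued map is quantified over every choice $y_i \in C(x_i)$ independently for each index.

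I will therefore justify $(x,x) \in P^\delta$ as follows. The cleanest route is the convention implicit in the digraph example, where the edge relation is reflexive, so $(x,x)$ is allowed whenever $(x)$ is. I expect the paper assumes this reflexivity: every $(x,x)$ lies in $P^0 \subseteq P^\delta$.

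Failing that, one could note that in the homotopy arguments downstream only the case $f_1(x) \neq f_2(x)$ matters, since $(y,y)$ is degenerate and vanishes in the regular complex. Even so, membership in $Q^{\delta+\varepsilon}$ still needs the hypothesis, so the reflexivity convention is the step I expect to be the main (really the only) obstacle.

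With $(x,x) \in P^\delta$ granted, the proof is one line: apply \cref{def:path-mvm} with $k=1$, $x_0 = x_1 = x$, $y_0 = f_1(x)$, and $y_1 = f_2(x)$.
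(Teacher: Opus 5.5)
Your argument is the paper's argument. Its proof sets $p=(x,x)\in P^\delta$ and applies \cref{def:path-mvm} with the two independent choices $f_1(x),f_2(x)\in C(x)$, exactly as in your last line. The step you single out, $(x,x)\in P^\delta$, is asserted there without justification, and your proposed justification does not close it. \cref{def:path-complex} and \cref{def:filtered-path-complex} only force the $0$-paths into $P^0$. The reflexive edge relation of the preliminary digraph example says nothing about a general filtered path complex, and the digraphs of \cref{sec:network-stability} are not assumed reflexive. A generated filtration contains $(x,x)$ only if the underlying path complex already does: by \cref{item:window-short} of \cref{lem:window-of-concatenation}, any cover of $(x,x)$ contains a path having $(x,x)$ as a subpath. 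So ``every $(x,x)$ lies in $P^0$'' is not a convention you can invoke but an extra hypothesis, and it cannot be dropped. For a counterexample, let $P_0=\{x\}$ with $P^\delta=\{(x)\}$ for all $\delta$, let $Q_0=\{y_1,y_2\}$ with $Q^\delta=\{(y_1),(y_2)\}$, and let $C=\{(x,y_1),(x,y_2)\}$. Then $C$ is an $\varepsilon$-path-multivalued map for every $\varepsilon\geq 0$, and $f_1(x)=y_1$, $f_2(x)=y_2$ are subordinate, yet $(y_1,y_2)\notin Q^{\delta+\varepsilon}$.

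The honest version of both your proof and the paper's states the assumption explicitly, for instance that every degenerate $1$-path $(x,x)$ lies in $P^0$. Granted that, your one-line application of \cref{def:path-mvm} with $k=1$ is complete. If you want the assumption to also serve the homotopy argument of \cref{prop:path-mvm-homology}, you need the stronger condition that each $P^\delta$ is closed under repeating a vertex. The image of a staircase path there has one more vertex than its projection $(x_0,\dots,x_m)$, so it is controlled only through $(x_0,\dots,x_k,x_k,\dots,x_m)\in P^\delta$.
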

\begin{proof}
    Let $p = (x,x) \in P^\delta$, then for any $y_1,y_2 \in C(x)$, by definition we know that $(y_1,y_2) \in Q^{\delta + \varepsilon}$, as required.
\end{proof}

\begin{proposition} \label{prop:path-mvm-homology}
Let $C \colon P_0 \rightrightarrows Q_0$ be an $\varepsilon$-path-multivalued map from $P$ to $Q$.
Then $C$ induces a canonical linear map on homology $\hlgy{C} \in \Hom^\varepsilon(\hlgy{P},\hlgy{Q})$, equal to $\hlgy{f}$ for any $f$ subordinate to $C$.
\end{proposition}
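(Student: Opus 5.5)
The plan follows the proof of the corresponding statement in \cite{chazal-desilva-oudot:stability-geometric-complexes}. There are three steps: show that every subordinate map is a map of filtered path complexes with shift $\varepsilon$, show that any two subordinate maps are homotopic at each level, and conclude via \cref{thm:homotopic-equal-on-hlgy}.

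\textbf{Step 1 (subordinate maps shift the filtration by $\varepsilon$).} Let $f \colon P_0 \to Q_0$ be subordinate to $C$; one exists because $C$ is a multivalued map, by choice. For a path $(x_0,\dots,x_k) \in P^\delta$ we may take $y_i = f(x_i) \in C(x_i)$. \Cref{def:path-mvm} then gives $(f(x_0),\dots,f(x_k)) \in Q^{\delta+\varepsilon}$. Hence $f$ restricts to a map of path complexes $f^\delta \colon P^\delta \to Q^{\delta+\varepsilon}$ for each $\delta$. These restrictions commute on the nose with the filtration inclusions, since they are all the same set map. Applying path homology therefore gives a morphism of persistence modules of degree $\varepsilon$, i.e.\ an element $\hlgy{f} \in \Hom^\varepsilon(\hlgy{P},\hlgy{Q})$.

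\textbf{Step 2 (two subordinate maps are one-step homotopic).} Let $f_1,f_2$ be subordinate to $C$. I would build the one-step homotopy $\alpha \colon \Cyl(P^\delta) \to Q^{\delta+\varepsilon}$ by $\alpha(x,0) = f_1(x)$ and $\alpha(x,1) = f_2(x)$. A path of $\Cyl(P^\delta)$ is a staircase. Its $P$-coordinates, after collapsing repetitions, form a path $(x_0,\dots,x_k) \in P^\delta$, and there is a single index $j$ where the $I$-coordinate jumps from $0$ to $1$ while the $P$-coordinate stays at $x_j$. The image under $\alpha$ is, after removing repeats, $(f_1(x_0),\dots,f_1(x_j),f_2(x_j),\dots,f_2(x_k))$.

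The key observation is that this is exactly a choice $y_i \in C(x_i)$ along the path $(x_0,\dots,x_j,x_j,\dots,x_k)$, obtained by doubling $x_j$. So if the doubled path lies in $P^\delta$, \cref{def:path-mvm} places the image in $Q^{\delta+\varepsilon}$. \Cref{lem:subordinate-length-one} is the length-one case of this argument.

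\textbf{Main obstacle.} The difficulty is that the doubled sequence is irregular and so need not be an elementary path of $P^\delta$. I expect to handle it in one of two ways, trying the first and falling back to the second if it fails.
\begin{itemize}
\item First, interpret the definition of an $\varepsilon$-path-multivalued map as applying to the degenerate sequence $(x_0,\dots,x_j,x_j,\dots,x_k)$. This is the same convention already used in \cref{lem:subordinate-length-one} with $(x,x)$. A degenerate path is allowed whenever its collapse is, since irregular paths vanish in $\regularpath{X}{R}{\bullet}$.
\item Alternatively, apply the definition directly to the original path with $y_i$ chosen as $f_1(x_i)$ for $i<j$, $f_2(x_i)$ for $i>j$, and either value at $i=j$. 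This yields the two halves of the image path. One would then still need to argue that the glued path, containing both $f_1(x_j)$ and $f_2(x_j)$, is allowed, which is again the doubled-vertex case.
\end{itemize}
Either way, $\alpha$ is a map of path complexes. Composing with the inclusions $i_0, i_1$ recovers $f_1$ and $f_2$.

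\textbf{Step 3 (conclusion).} By \cref{thm:homotopic-equal-on-hlgy}, $f_1^\delta$ and $f_2^\delta$ induce the same map $\hlgy{P^\delta} \to \hlgy{Q^{\delta+\varepsilon}}$ for every $\delta$. Hence $\hlgy{f}$ is independent of the subordinate map $f$, and we may define $\hlgy{C} := \hlgy{f}$. This map is canonical and, by Step 1, lies in $\Hom^\varepsilon(\hlgy{P},\hlgy{Q})$.
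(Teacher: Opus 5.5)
Your proposal is correct and follows the paper's proof: the same restriction $f^\delta \colon P^\delta \to Q^{\delta+\varepsilon}$ of a subordinate map, the same one-step homotopy $\alpha(x,0)=f_1(x)$, $\alpha(x,1)=f_2(x)$ on $\Cyl(P^\delta)$, and the same appeal to \cref{thm:homotopic-equal-on-hlgy}. The doubled-vertex issue you flag is genuine and your first option is the right fix, but it is a convention on path complexes, not a consequence of irregular paths vanishing in $\regularpath{X}{R}{\bullet}$; the paper also uses it without stating it, both when \cref{lem:subordinate-length-one} takes $(x,x) \in P^\delta$ and when it applies the defining property to the glued image. Your second option cannot replace the convention, since path complexes are not closed under gluing: without it the statement fails already for $P$ the one-vertex complex on $a$, $Q$ the discrete complex on $\{b,c\}$, and $C(a)=\{b,c\}$, where the two subordinate maps induce different maps on $H_0$.
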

 
\begin{proof}
    Let $f \colon P_0 \to Q_0$ be any subordinate map to $C$.
    Then for every $\delta \geq 0$, there is an induced path complex map $f^\delta \colon P^\delta \to Q^{\delta + \varepsilon}$ given by, for $p=(x_0, \dots , x_n )\in P^\delta$,
    \[
    f^\delta(p) = (f(x_0), \dots, f(x_n) ),
    \]
    that commutes with the inclusions $\iota_\delta^{\delta'} \colon P^\delta \to P^{\delta'}$ and $\tau_{\delta + \varepsilon}^{\delta' + \varepsilon} \colon Q^{\delta + \varepsilon} \to Q^{\delta' + \varepsilon}$, i.e., the square 
\[
    \begin{tikzcd}[column sep = large]
    P^\delta \arrow[r,"\iota_\delta^{\delta'}"] \arrow[d,"f^\delta"'] & P^{\delta '} \arrow[d,"f^{\delta '}"] \\
    Q^{\delta + \varepsilon} \arrow[r,"\tau_{\delta + \varepsilon}^{\delta ' + \varepsilon}"] & Q^{\delta ' + \varepsilon}
    \end{tikzcd}
\]
commutes for all $\delta \leq \delta'$.
By functoriality of path homology, we have $\hlgy{\tau_{\delta + \varepsilon}^{\delta ' + \varepsilon}} \circ \hlgy{f^\delta} = \hlgy{f^{\delta'}} \circ \hlgy{\iota_\delta^{\delta'}}$, which proves that $\hlgy{f^\delta} \in \Hom^\varepsilon(\hlgy{P},\hlgy{Q})$.
 
Now consider any two $f_1,f_2 \colon P_0 \to Q_0$ subordinate to $C$, with induced path complex maps $f^\delta_1, f_2^\delta \colon P^\delta \to Q^{\delta + \varepsilon}$.
We show that $f_1^\delta \simeq f_2^\delta$ by exhibiting a one-step homotopy.
Recalling that $\Cyl(P^\delta) = P^\delta \square I$ has vertex set $P^\delta_0 \times \{0,1\}$, we define $\alpha \colon \Cyl(P^{\delta}) \to Q^{\delta + \varepsilon}$ by
\[
    (x,0) \mapsto f_1(x), \qquad (x,1) \mapsto f_2(x).
\]
By construction $\alpha \circ i_0 = f_1^\delta$ and $\alpha \circ i_1 = f_2^\delta$, so once $\alpha$ is shown to be a well-defined map path complex, it is a one-step homotopy from $f_1^\delta$ to $f_2^\delta$.

It remains to check that $\alpha$ carries each path of $\Cyl(P^\delta)$ to a path of $Q^{\delta+\varepsilon}$.
Such a path has collapsed $P$-projection some $(x_0,\dots,x_m) \in P^\delta$, and its $I$-coordinate is nondecreasing, so it is either the image of $i_0$ or of $i_1$, or else has the form
\[
    ((x_0,0),\dots,(x_k,0),(x_k,1),\dots,(x_m,1))
\]
for some $0 \leq k \leq m$.
Its image under $\alpha$ is accordingly $(f_1(x_0),\dots,f_1(x_m))$, or $(f_2(x_0),\dots,f_2(x_m))$, or
\[
    (f_1(x_0),\dots,f_1(x_k),f_2(x_k),\dots,f_2(x_m)),
\]
and since $f_1$ and $f_2$ are subordinate to $C$, every entry of the image lies in $C$ of the corresponding vertex of $(x_0,\dots,x_m)$.
The first two images are $f_1^\delta$ and $f_2^\delta$ applied to $(x_0,\dots,x_m)$, so they lie in $Q^{\delta+\varepsilon}$.
In the remaining case the transition edge $(f_1(x_k),f_2(x_k))$ lies in $Q^{\delta+\varepsilon}$ by \cref{lem:subordinate-length-one}, and since $(x_0,\dots,x_m) \in P^\delta$, the defining property of an $\varepsilon$-path-multivalued map places the whole image in $Q^{\delta+\varepsilon}$.
Thus $\alpha$ is a well-defined map of path complexes, hence a one-step homotopy $f_1^\delta \simeq f_2^\delta$.
Therefore $\hlgy{f_1} = \hlgy{f_2}$ by \cref{thm:homotopic-equal-on-hlgy}, so the map $\hlgy{C}$ is well-defined.
\end{proof}

As a consequence of \cref{prop:path-mvm-homology}, all subsets of $C$ that are themselves multivalued maps induce the same map on homology.

\begin{proposition} \label{prop:path-correspondence-subset}
    If $C' \subseteq C \colon P_0 \rightrightarrows Q_0$ and $C$ is an $\varepsilon$-path-multivalued map from $P$ to $Q$, then $C'$ is an $\varepsilon$-path-multivalued map from $P$ to $Q$ and $\hlgy{C'} = \hlgy{C}$.
\end{proposition}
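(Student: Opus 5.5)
The statement has two parts, and I will prove them in order. The first is that $C'$ is again an $\varepsilon$-path-multivalued map. The second is that $C'$ and $C$ induce the same map on homology.

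For the first part, I am given that $C'$ is a multivalued map, meaning $\pi_1(C') = P_0$. So only the path condition of \cref{def:path-mvm} needs checking. Fix $\delta \geq 0$, a path $(x_0,\dots,x_k) \in P^\delta$, and vertices $y_i \in C'(x_i)$. Since $C' \subseteq C$, we have $C'(x_i) \subseteq C(x_i)$, so each $y_i$ lies in $C(x_i)$. Because $C$ is an $\varepsilon$-path-multivalued map, $(y_0,\dots,y_k) \in Q^{\delta+\varepsilon}$. This is exactly the required condition, and it is pure monotonicity of the definition in the relation.

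For the second part, \cref{prop:path-mvm-homology} now applies to both $C'$ and $C$. Hence $\hlgy{C'}$ and $\hlgy{C}$ are both well defined. Moreover, each equals $\hlgy{f}$ for any map $f$ subordinate to the respective multivalued map.

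Since $\pi_1(C') = P_0$, I choose a map $f \colon P_0 \to Q_0$ subordinate to $C'$ by picking, for each $x$, some $f(x)$ with $(x,f(x)) \in C'$. This uses choice in general. Then $(x,f(x)) \in C$ as well, so $f$ is also subordinate to $C$. Applying \cref{prop:path-mvm-homology} once for $C'$ and once for $C$ gives
\[
\hlgy{C'} = \hlgy{f} = \hlgy{C},
\]
as maps in $\Hom^\varepsilon(\hlgy{P},\hlgy{Q})$.

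There is no real obstacle here. The only point needing care is that a single subordinate map serves both relations. This is why the hypothesis that $C'$ is itself a multivalued map, with surjective first projection, is needed: it guarantees that a subordinate $f$ exists. The homotopy-independence of the choice of $f$ was already established in \cref{prop:path-mvm-homology}.
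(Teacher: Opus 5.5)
Your proposal is correct and matches the paper's proof: the $\varepsilon$-condition passes from $C$ to $C'$ by monotonicity, and a single map $f$ subordinate to $C'$ is also subordinate to $C$, so \cref{prop:path-mvm-homology} gives $\hlgy{C'} = \hlgy{f} = \hlgy{C}$. You also rightly note that this needs $C'$ to have full first projection, so that such an $f$ exists; the paper likewise takes $C'$ to be a multivalued map.
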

\begin{proof}
    Since $C'$ is a multivalued map contained in $C$, it is also an $\varepsilon$-path-multivalued map, and any map $f \colon P_0 \to Q_0$ that is subordinate to $C'$ is also subordinate to $C$, so from \cref{prop:path-mvm-homology} we have $\hlgy{C'} = \hlgy{f} = \hlgy{C}$.
\end{proof}

\begin{proposition} \label{prop:path-composition-mvm}
    Let $P$, $Q$, $R$ be filtered path complexes.
    If
    \begin{center}
        $C \colon P_0 \rightrightarrows Q_0$ is an $\varepsilon$-path-multivalued map from $P$ to $Q$, \\
        $D \colon Q_0 \rightrightarrows R_0$ is a $\eta$-path-multivalued map from $Q$ to $R$,
    \end{center}
    then the composite $D \circ C \colon P_0 \rightrightarrows R_0$ is a $(\varepsilon + \eta)$-path-multivalued map from $P$ to $R$, and $\hlgy{D \circ C} = \hlgy{D} \circ \hlgy{C}$.
\end{proposition}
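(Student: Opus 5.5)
The proof has two parts: first that $D \circ C$ is an $(\varepsilon+\eta)$-path-multivalued map, then that it induces the composite map on homology. Both follow directly from the definitions and \cref{prop:path-mvm-homology}.

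First we check that $D \circ C = \{(x,z) \mid \exists y,\ (x,y)\in C,\ (y,z)\in D\}$ is a multivalued map. Given $x \in P_0$, pick $y$ with $(x,y)\in C$, since $\pi_1(C)=P_0$. Then pick $z$ with $(y,z)\in D$, since $\pi_1(D)=Q_0$. Hence $\pi_1(D\circ C)=P_0$.

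Next we verify the filtration condition. Fix $\delta \geq 0$, a path $(x_0,\dots,x_k)\in P^\delta$, and vertices $z_i \in (D\circ C)(x_i)$. For each $i$ choose a witness $y_i$ with $(x_i,y_i)\in C$ and $(y_i,z_i)\in D$. Since $C$ is an $\varepsilon$-path-multivalued map, $(y_0,\dots,y_k)\in Q^{\delta+\varepsilon}$. Since $D$ is an $\eta$-path-multivalued map, applying its defining property at filtration level $\delta+\varepsilon$ gives $(z_0,\dots,z_k)\in R^{\delta+\varepsilon+\eta}$. The witnesses $y_i$ are chosen independently for each $i$, so no compatibility issue arises.

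For the homology statement, choose $f\colon P_0\to Q_0$ subordinate to $C$ and $g\colon Q_0\to R_0$ subordinate to $D$. Then $g\circ f$ is subordinate to $D\circ C$, because $(x,f(x))\in C$ and $(f(x),g(f(x)))\in D$. By \cref{prop:path-mvm-homology}, applied three times,
\[
\hlgy{D\circ C} = \hlgy{g\circ f}, \qquad \hlgy{C}=\hlgy{f}, \qquad \hlgy{D}=\hlgy{g}.
\]
The levelwise maps satisfy $(g\circ f)^\delta = g^{\delta+\varepsilon}\circ f^\delta \colon P^\delta \to R^{\delta+\varepsilon+\eta}$. Functoriality of path homology then gives $\hlgy{g\circ f}=\hlgy{g}\circ\hlgy{f}$, where the right-hand side is the composite in $\Hom^{\varepsilon+\eta}$ formed by the index shift.

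There is no real obstacle. The only point needing care is the bookkeeping of the shift: composing a map of degree $\varepsilon$ with one of degree $\eta$ gives a map of degree $\varepsilon+\eta$, with $\hlgy{g}$ evaluated at index $\delta+\varepsilon$.
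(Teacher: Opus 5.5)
Your proof is correct and follows the paper's argument: you push a path of $P^\delta$ through $C$ into $Q^{\delta+\varepsilon}$ and then through $D$ into $R^{\delta+\varepsilon+\eta}$, and for homology you use that $g \circ f$ is subordinate to $D \circ C$, together with \cref{prop:path-mvm-homology} and functoriality. Your version is slightly more careful than the paper's in two respects: you start from arbitrary $z_i \in (D \circ C)(x_i)$ and then choose witnesses $y_i$, whereas the paper runs the quantifiers in the converse direction, and you also verify explicitly that $D \circ C$ is total.
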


\begin{proof}
We first show that $D \circ C$ is an $(\varepsilon + \eta)$-path-multivalued map.
Let $p=(x_0,\dots,x_n) \in P^\delta$. Since $C$ is an $\varepsilon$-path-multivalued map, for every choice $y_i \in C(x_i)$ the path $p'=(y_0,\dots, y_n) \in Q^{\delta + \varepsilon}$.
Now apply $D$.
Since $D$ is an $\eta$-path-multivalued map, for every choice $z_i \in D(y_i)$ the path $p''=(z_0,\dots, z_n) \in R^{\delta + \varepsilon + \eta}$.
Since $y_i \in C(x_i)$ and $z_i \in D(y_i)$, we get that $z_i \in (D \circ C)(x_i)$.
Therefore, $D \circ C$ is an $(\varepsilon + \eta)$-path-multivalued map.

Let $f \colon P_0 \to Q_0$ be subordinate to $C$, and let $g \colon Q_0 \to R_0$ be subordinate to $D$.
The composite $g \circ f \colon P_0 \to R_0$ is subordinate to $D \circ C$, thus we have
\[
\hlgy{D \circ C} = \hlgy{g \circ f}= \hlgy{g} \circ \hlgy{f} = \hlgy{D} \circ \hlgy{C}
\]
as required.
\end{proof}

Now, from \cref{prop:path-correspondence-subset} and \cref{prop:path-composition-mvm} we deduce the following.

\begin{proposition} \label{prop:path-interleaving}
    Let $P$ and $Q$ be filtered path complexes.
    If $C \colon P_0 \rightrightarrows Q_0$ is a correspondence such that $C$ and $C^T$ are both $\varepsilon$-path-multivalued maps, then together they induce a canonical $\varepsilon$-interleaving between $\hlgy{P}$ and $\hlgy{Q}$, the interleaving homomorphisms being $\hlgy{C}$ and $\hlgy{C^T}$.
\end{proposition}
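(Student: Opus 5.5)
We will build the interleaving directly from the two $\varepsilon$-path-multivalued maps and check the axioms with \cref{prop:path-composition-mvm} and \cref{prop:path-correspondence-subset}. By \cref{prop:path-mvm-homology}, $C$ gives $\varphi = \hlgy{C} \in \Hom^\varepsilon(\hlgy{P},\hlgy{Q})$. Concretely, $\varphi_\delta = \hlgy{f^\delta} \colon \hlgy{P^\delta} \to \hlgy{Q^{\delta+\varepsilon}}$ for any $f$ subordinate to $C$. Likewise $C^T$ gives $\psi = \hlgy{C^T}$, with $\psi_\delta = \hlgy{g^\delta}$ for any $g$ subordinate to $C^T$. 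Commutation with the structure maps is the naturality square already established in the proof of \cref{prop:path-mvm-homology}, so the only remaining task is the two triangle identities.

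For $\psi_{\delta+\varepsilon}\circ\varphi_\delta = v_\delta^{\delta+2\varepsilon}$, apply \cref{prop:path-composition-mvm}. It shows that $C^T\circ C \colon P_0 \rightrightarrows P_0$ is a $2\varepsilon$-path-multivalued map from $P$ to itself, and that
\[
\hlgy{C^T\circ C} = \hlgy{C^T}\circ\hlgy{C},
\]
which at level $\delta$ reads $\psi_{\delta+\varepsilon}\circ\varphi_\delta$. Next, the diagonal $\Delta_{P_0} = \{(x,x)\}$ is contained in $C^T\circ C$. Indeed, since $C$ is a multivalued map, every $x$ has some $y$ with $(x,y)\in C$, and then $(x,x)\in C^T\circ C$. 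The diagonal is itself a multivalued map, so \cref{prop:path-correspondence-subset} gives $\hlgy{\Delta_{P_0}} = \hlgy{C^T\circ C}$ as elements of $\Hom^{2\varepsilon}$. The identity function is subordinate to $\Delta_{P_0}$, so $\hlgy{\Delta_{P_0}}$ at level $\delta$ is the map induced by the identity on vertices, viewed as $P^\delta \to P^{\delta+2\varepsilon}$. That map is exactly the filtration inclusion, whose homology map is $v_\delta^{\delta+2\varepsilon}$. The other identity follows symmetrically: $C$ is a correspondence, so $C^T$ is a multivalued map, and $\Delta_{Q_0}\subseteq C\circ C^T$.

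The main point to be careful about is that \cref{prop:path-mvm-homology} and \cref{prop:path-correspondence-subset} are stated as equalities of degree-shifted families. We need to read them levelwise, with the correct shift at each level: the composite $\psi_{\delta+\varepsilon}\circ\varphi_\delta$ is the level-$\delta$ component of a $2\varepsilon$-morphism. We also need $f^\delta$ for the identity function to agree with the inclusion $\iota_\delta^{\delta+2\varepsilon}$ as maps of path complexes. This is immediate, because a map of path complexes is just its vertex function. Canonicity follows because $\varphi$ and $\psi$ are independent of the choice of subordinate maps by \cref{prop:path-mvm-homology}.
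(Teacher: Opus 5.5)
Your proposal is correct and follows essentially the same route as the paper. The diagonal lies in $C^T\circ C$ and in $C\circ C^T$, so \cref{prop:path-composition-mvm} and \cref{prop:path-correspondence-subset} identify $\hlgy{C^T}\circ\hlgy{C}$ and $\hlgy{C}\circ\hlgy{C^T}$ with the maps induced by the identity, which are the $2\varepsilon$-shifted structure maps. Your levelwise bookkeeping, and your observation that the identity function subordinate to the diagonal induces the filtration inclusion, make explicit what the paper's three-line proof leaves implicit.
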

\begin{proof}
    Since $\identity{P_0} \subseteq C^T \circ C$ and $\identity{Q_0} \subseteq C \circ C^T$, from \cref{prop:path-composition-mvm} we know that $\identity{P_0}$ and $\identity{Q_0}$ are both $2\varepsilon$-path-multivalued maps.
    So it follows that $\identity{\hlgy{P}}^{2 \varepsilon} = \hlgy{\identity{P_0}} = \hlgy{C^T} \circ \hlgy{C}$ and $\identity{\hlgy{Q}}^{2 \varepsilon}= \hlgy{\identity{Q_0}} = \hlgy{C} \circ \hlgy{C^T}$.
    Therefore, $\hlgy{P}$ and $\hlgy{Q}$ are $\varepsilon$-interleaved.
\end{proof}

Finally, using \cref{prop:path-interleaving}, we prove stability of persistent path homology of path complexes.

\begin{theorem} \label{thm:path-stability}
Let $P$ and $Q$ be filtered path complexes, and let $k \in \mathbb{N}$.
Then
\[
    \dbottleneck{\dgm_k(P)}{\dgm_k(Q)} \leq 2\dpathcomplex{P}{Q}.
\]
In particular, passing to generated filtered path complexes, the same bound holds for any two weighted path complexes, applied to the filtered path complexes they generate.
\end{theorem}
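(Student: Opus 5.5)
The plan is to reduce the bound to \cref{prop:path-interleaving} combined with the isometry theorem recalled in \cref{subsec:persistence-background}. If $\dpathcomplex{P}{Q} = \infty$ there is nothing to prove. Otherwise, fix any $\eta > 0$ and choose a correspondence $C \in \Corr(P_0,Q_0)$ with $\dispath(C) \leq 2\dpathcomplex{P}{Q} + \eta$. Set $\varepsilon = \dispath(C)$. The key step is to show that $C$ is an $\varepsilon'$-path-correspondence in the sense of \cref{def:path-mvm} for every $\varepsilon' > \varepsilon$.

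To see this, let $(x_0,\dots,x_k) \in P^\delta$ and choose $y_i \in C(x_i)$. Then $\sigma = ((x_0,y_0),\dots,(x_k,y_k))$ is an ordered lift through $C$. Hence $l_Q(y_0,\dots,y_k) \leq l_P(x_0,\dots,x_k) + \varepsilon \leq \delta + \varepsilon$, where the last inequality uses the right-hand inclusion of \cref{item:entry-sublevel} of \cref{lem:entry-weight}.

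Here sits the one subtlety, and I expect it to be the main (if minor) obstacle. By \cref{item:entry-sublevel}, $l_Q \leq \delta+\varepsilon$ does \emph{not} guarantee membership in $Q^{\delta+\varepsilon}$; only the strict inequality does. So I will work with $\varepsilon' > \varepsilon$: from $l_Q(y_0,\dots,y_k) < \delta+\varepsilon'$ we get $(y_0,\dots,y_k) \in Q^{\delta+\varepsilon'}$. The same argument applied to $C^T$, using $\dispath(C^T)=\dispath(C)$, shows that $C^T$ is an $\varepsilon'$-path-multivalued map from $Q$ to $P$.

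\Cref{prop:path-interleaving} then gives an $\varepsilon'$-interleaving between the degree-$k$ persistence modules $\hPath[k](P)$ and $\hPath[k](Q)$; restricting the interleaving of \cref{prop:path-interleaving} to degree $k$ is harmless, since all maps are graded. Therefore $d_I \leq \varepsilon'$ for all $\varepsilon' > \dispath(C)$, so $d_I \leq \dispath(C) \leq 2\dpathcomplex{P}{Q} + \eta$. Letting $\eta \to 0$ gives $d_I(\hPath[k](P),\hPath[k](Q)) \leq 2\dpathcomplex{P}{Q}$.

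Finally, the isometry theorem converts the interleaving distance into the bottleneck distance between $\dgm_k(P)$ and $\dgm_k(Q)$. This step tacitly requires the modules to be pointwise finite-dimensional, or at least q-tame, so that diagrams are defined; I will state this as the standing assumption implicit in writing $\dgm_k$. The clause about weighted path complexes is immediate: apply the result to the filtered path complexes they generate, for which $\dpathcomplex{\cdot}{\cdot}$ was defined to be computed.
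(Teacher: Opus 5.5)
Your proof is correct and is the paper's argument: show $C$ is a path-correspondence for a parameter strictly larger than $\dispath(C)$, using the strict-inequality half of \cref{item:entry-sublevel} of \cref{lem:entry-weight} (exactly the subtlety you flagged), then invoke \cref{prop:path-interleaving} and convert to bottleneck distance. The differences are bookkeeping and added care. The paper fixes $\eta > 2\dpathcomplex{P}{Q}$, picks $C$ with $\dispath(C) < \eta$, and applies algebraic stability for each such $\eta$; you instead bound $d_I$ first and then apply the isometry theorem, and you also treat the infinite-distance case and the finiteness hypothesis needed for $\dgm_k$ explicitly.
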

 
\begin{proof}
Let $\eta > 2\dpathcomplex{P}{Q}$.
    Then there exists a correspondence $C \colon P_0 \rightrightarrows Q_0$ such that $\dispath(C) < \eta$.
 
    We first show that $C$ is an $\eta$-path-multivalued map from $P$ to $Q$.
    Let $\delta\geq 0$, and let $p=(x_0,\dots,x_n)\in P^\delta$, so that $l_P(p) \leq \delta$.
    Choose arbitrary vertices $y_i\in C(x_i)$ and set $p'=(y_0,\dots,y_n)$.
    Then $\sigma=((x_0,y_0),\dots,(x_n,y_n))$ is an ordered lift through $C$, with
    $\pi_1(\sigma)=p$ and $\pi_2(\sigma)=p'$.
    Since $\dispath(C)<\eta$, we have
    \[
    \left| l_P(p)- l_Q(p') \right| <\eta.
    \]
    Since $l_P(p) \leq \delta$, we have
    \[
    l_Q(p') < l_P(p) + \eta \leq \delta + \eta .
    \]
    Thus $p'\in Q^{\delta + \eta}$ by \cref{item:entry-sublevel} of \cref{lem:entry-weight}, and $C$ is an $\eta$-path-multivalued map from $P$ to $Q$.
    The same argument applied to $C^T$ shows that $C$ is an $\eta$-path-correspondence.
 
    By \cref{prop:path-interleaving}, since $C$ is an $\eta$-path-correspondence, the persistence modules $\hlgy{P}$ and $\hlgy{Q}$ are $\eta$-interleaved.
    By the algebraic stability theorem, $\dbottleneck{\dgm_k(P)}{\dgm_k(Q)} \leq \eta$.
    Since $\eta$ was any value exceeding $2\dpathcomplex{P}{Q}$, we conclude
    \[
    \dbottleneck{\dgm_k(P)}{\dgm_k(Q)} \leq 2\dpathcomplex{P}{Q},
    \]
    as required.
\end{proof}

The later sections all use \cref{thm:path-stability} in the same way: an object is replaced by an associated path complex, and a comparison of distances is fed into the theorem.

\begin{corollary} \label{cor:transfer}
    Let $G$ and $H$ be weighted objects whose persistent path homology is by definition that of associated path complexes $P(\cmpl{G})$ and $P(\cmpl{H})$, so that $\dgm_k(G) = \dgm_k(P(\cmpl{G}))$ and $\dgm_k(H) = \dgm_k(P(\cmpl{H}))$ for all $k \in \mathbb{N}$.
    Let $d$ be a distance on such objects satisfying
    \[
        \dpathcomplex{P(\cmpl{G})}{P(\cmpl{H})} \leq d(G,H).
    \]
    Then for every $k \in \mathbb{N}$,
    \[
        \dbottleneck{\dgm_k(G)}{\dgm_k(H)} \leq 2\, d(G,H).
    \]
\end{corollary}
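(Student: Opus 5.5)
This is a direct chaining of the hypothesis with \cref{thm:path-stability}, with no new construction needed. I will fix $k \in \mathbb{N}$ and use the definitional identifications $\dgm_k(G) = \dgm_k(P(\cmpl{G}))$ and $\dgm_k(H) = \dgm_k(P(\cmpl{H}))$ to rewrite the left-hand side. The quantity to bound then becomes $\dbottleneck{\dgm_k(P(\cmpl{G}))}{\dgm_k(P(\cmpl{H}))}$.

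Next I will apply \cref{thm:path-stability} to the two filtered path complexes $P(\cmpl{G})$ and $P(\cmpl{H})$. If these are presented as weighted path complexes, I use the clause of that theorem covering the filtered path complexes they generate. This gives
\[
    \dbottleneck{\dgm_k(P(\cmpl{G}))}{\dgm_k(P(\cmpl{H}))} \leq 2\,\dpathcomplex{P(\cmpl{G})}{P(\cmpl{H})}.
\]
I then combine this with the assumed inequality $\dpathcomplex{P(\cmpl{G})}{P(\cmpl{H})} \leq d(G,H)$, multiplied by $2$, to obtain the claim. The case $d(G,H) = \infty$ is trivial, so the arithmetic in $\mathbb{R}_{\geq 0}\cup\{\infty\}$ causes no trouble.

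The only point needing care is a bookkeeping one. The diagrams must be those of the same filtered objects to which \cref{thm:path-stability} is applied, with pointwise finite-dimensionality as implicitly assumed there. This is guaranteed by the hypothesis of the corollary, which fixes the persistent path homology of $G$ and $H$ to be that of $P(\cmpl{G})$ and $P(\cmpl{H})$. So I expect no genuine obstacle; the substantive work, bounding $\dpathcomplex{\cdot}{\cdot}$ by $d$, is deferred to Step 4 of each later section.
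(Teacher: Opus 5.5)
Your proposal is correct and matches the paper's proof: rewrite the diagrams via the definitional identification $\dgm_k(G) = \dgm_k(P(\cmpl{G}))$, apply \cref{thm:path-stability}, and then use the hypothesis on $d$. Your side remarks on the case $d(G,H) = \infty$ and on pointwise finite-dimensionality are harmless additions that the paper leaves implicit.
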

\begin{proof}
    Unwinding the definition of $\dgm_k$ on objects and applying \cref{thm:path-stability} gives
    \begin{align*}
        \dbottleneck{\dgm_k(G)}{\dgm_k(H)}
        &= \dbottleneck{\dgm_k(P(\cmpl{G}))}{\dgm_k(P(\cmpl{H}))} && \text{(definition of $\dgm_k$ on objects)} \\
        &\leq 2\dpathcomplex{P(\cmpl{G})}{P(\cmpl{H})} && \text{(\cref{thm:path-stability})} \\
        &\leq 2\, d(G,H). && \text{(hypothesis on $d$)} \qedhere
    \end{align*}
\end{proof}
\section{Stability of Persistent Path Homology of Hypergraphs} \label{sec:hypergraph-stability}
In this section we deduce stability of persistent path homology of hypergraphs, following the four steps outlined in \cref{subsec:common-pattern}.
Our review of path homology of hypergraphs is brief, and we refer the reader to \cite{grigor2019homology} for a thorough treatment.
As in \cref{sec:path-cpx-stability}, filtered objects are primitive and weighted ones are a source of examples.

\begin{definition} \label{def:filtered-hypergraph}
A \emph{hypergraph} is a pair $G = (X, E_G)$, where $X$ is a vertex set and $E_G \subseteq \subsets{X}$ is a set of edges, where $\subsets{X}$ denotes the set of nonempty subsets of $X$.
A \emph{filtered hypergraph} is a family $G = \{G^\delta\}_{\delta \geq 0}$ of hypergraphs on a fixed vertex set $X$, with inclusions $G^\delta \hookrightarrow G^{\delta'}$ for $\delta \leq \delta'$.
Its \emph{entry weight} $l_G \colon \subsets{X} \to \mathbb{R}_{\geq 0} \cup \{\infty\}$ is $l_G(e) = \inf\{\delta \geq 0 \mid e \in E_G^\delta\}$.
\end{definition}

\begin{definition} \label{def:weighted-hypergraph}
A \emph{weighted hypergraph} is a triple $G = (X, E_G, w_G)$, where $(X, E_G)$ is a hypergraph and $w_G \colon E_G \to \mathbb{R}_{\ge 0} \cup \{ \infty \}$ is a weight function.
\end{definition}

\subsection{Step 1: The covering relation} \label{subsec:hyp-step1}

The covering relation for hypergraphs is generated by connectedness of edges.

\begin{definition} \label{def:hypergraph-connected}
    Let $G = (X,E_G)$ be a hypergraph. A subset $S \subseteq E_G$ is \emph{connected} if for all $u,v \in \bigcup S$ there exist vertices $u=v_0, v_1, \dots, v_m=v$ and edges $e_1, \dots, e_m \in S$ with $\{v_{i-1}, v_i\} \subseteq e_i$ for each $1 \leq i \leq m$.
    We say that $S$ \emph{covers} a subset $s \subseteq X$ if $S$ is connected and $s \subseteq \bigcup S$.
\end{definition}

This is a covering relation on $\subsets{X}$ in the sense of \cref{def:covering-relation}: the singleton $\{s\}$ covers $s$, and if $S$ covers $s$ while each $e \in S$ is covered by $T_e$, then $\bigcup_e T_e$ is again connected, since consecutive members of $S$ share a vertex.
Applying \cref{con:generated-filtration} to this covering relation gives the induced weight
\[
    \overline{w}_{G}(s) = \inf\Big\{ \textstyle\sum_{e \in S} w_G(e) \;\Big|\; S \text{ covers } s \Big\}
\]
and the filtered hypergraph \emph{generated by} $G$, with entry weight $l_G = \overline{w}_{G}$; each sublevel set is a set of subsets of $X$, hence again a hypergraph.

\begin{definition} \label{def:hypergraph-distance}
    Let $G= (X, E_G, w_G)$ and $H = (Y,E_H,w_H)$ be weighted hypergraphs.
    The \emph{hypergraph distortion} of a correspondence $C \colon X \rightrightarrows Y$ is
    \[
        \dishyper(C) = \sup_\sigma \big| \overline{w}_{G}(\pi_1(\sigma)) - \overline{w}_{H}(\pi_2(\sigma)) \big|,
    \]
    where $\sigma$ ranges over finite ordered lifts through $C$, and the \emph{hypergraph distance} between $G$ and $H$ is
    \[
        \dhyper{G}{H} = \tfrac{1}{2}\inf \{ \dishyper(C) \mid C \in \Corr(X,Y)\}.
    \]
\end{definition}

\subsection{Step 2: The associated path complex} \label{subsec:hyp-step2}

We recall how a hypergraph gives rise to a path complex, and hence to path homology.

\begin{definition} \label{def:density-q-path}
    Let $G = (X,E_G,w_G)$ be a weighted hypergraph and $q \geq 1$.
    Following Grigor'yan, Jimenez, Muranov, and Yau \cite{grigor2019homology}, the \emph{density-$q$ path complex} $P^q(G)$ is the weighted path complex on $X$ whose weight function is
    \[
    w_{P^q(G)}(p) = \sup\big\{ \inf\{w_G(e) \mid b\subseteq e \in E_G \} \;\big|\; b = (x_i, \dots, x_{i+q-1}) \text{ a } q\text{-block of } p \big\}.
    \]
    Here a \emph{$q$-block} of $p$ is a subpath of $p$ on $q$ vertices; a path on fewer than $q$ vertices has none, and the supremum over the empty set is $0$.
\end{definition}

When $E_G = \subsets{X}$, or when the induced weight $\overline{w}_{G}$ is used in place of $w_G$, each infimum is attained at $e = b$ and the weight simplifies to
    \[
    w_{P^q(G)}(p) = \sup\{ \overline{w}_{G}(b) \mid b \text{ a } q\text{-block of } p \}.
    \]

\begin{example} \label{ex:density-q}
The weighted hypergraph with edges $(\{1,2\},1), (\{2,3\},2), (\{4,5\},1)$, illustrated below, yields the density-$2$ weighted path complex
    \[
    ((1,2),1),\ ((2,3),2),\ ((1,2,3),2),\ ((4,5),1).
    \]
\begin{center}
\begin{tikzpicture}[
  vtx/.style={circle, fill=black, inner sep=1.5pt},
  bub/.style={line cap=round, line join=round, opacity=0.30},
]
  \draw[bub, blue,   line width=6.5mm] (0,0.9) -- (1.4,0);
  \draw[bub, orange, line width=6.5mm] (1.4,0) -- (2.8,0.9);
  \draw[bub, teal,   line width=6.5mm] (4.6,0.45) -- (6.0,0.45);
  \node[vtx] at (0,0.9) {};    \node at (-0.15,1.25) {$1$};
  \node[vtx] at (1.4,0) {};    \node at (1.4,-0.55) {$2$};
  \node[vtx] at (2.8,0.9) {};  \node at (2.95,1.25) {$3$};
  \node[vtx] at (4.6,0.45) {}; \node at (4.6,0.95) {$4$};
  \node[vtx] at (6.0,0.45) {}; \node at (6.0,0.95) {$5$};
  \node[blue]   at (0.05,0.05) {$1$};
  \node[orange] at (2.75,0.05) {$2$};
  \node[teal]   at (5.3,-0.35) {$1$};
\end{tikzpicture}
\end{center}
The path $(1,2,3)$ has weight $2$: its $2$-blocks $(1,2)$ and $(2,3)$ have weights $1$ and $2$, and the supremum is $2$.
\end{example}

\subsection{Step 3: The induced path weight} \label{subsec:hyp-step3}

A weighted hypergraph yields a weighted path complex in two ways: convert to a density-$q$ path complex and then induce a weight by covering, which gives $\cmpl{P^q(G)}$, or induce the hypergraph weight first and then convert, which gives $P^q(\cmpl{G})$.
These do not agree in general:
\[
    \cmpl{P^q(G)} \neq P^q(\cmpl{G}).
\]
The two constructions do agree when applied to $\cmpl{G}$ for $q \leq 2$, and in that range one inequality relates the composites.
Neither statement is used in Step 4 or in the stability theorem, both of which hold for every $q$; we record them because they explain how the two constructions are related.

\begin{proposition} \label{prop:hyp-idempotent}
    Let $G = (X,E_G,w_G)$ be a weighted hypergraph.
    Then $\overline{w}_{P^q(\cmpl{G})} \leq w_{P^q(\cmpl{G})}$, with equality when $q \leq 2$.
\end{proposition}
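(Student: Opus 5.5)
The inequality $\overline{w}_{P^q(\cmpl{G})} \leq w_{P^q(\cmpl{G})}$ is immediate. By \cref{lem:path-covering-relation} the path connected covering relation of \cref{def:path-connected} is a covering relation, so \cref{item:cov-decreases} of \cref{lem:covering-idempotent} applies to the weight $w = w_{P^q(\cmpl{G})}$. The substance is the reverse inequality for $q \leq 2$. Write $v(p) = \sup\{\overline{w}_G(b) \mid b \text{ a } q\text{-block of } p\}$. This is the simplified form of $w_{P^q(\cmpl{G})}$ recorded after \cref{def:density-q-path}, valid because we use $\overline{w}_G$. By the final clause of \cref{lem:covering-idempotent}, $\overline{w}$ is the largest covering-subadditive weight below $w$. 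Hence it suffices to show that $v$ itself is covering-subadditive, i.e.\ that $v(p) \leq \sum_{s\in S} v(s)$ whenever $S$ is a path connected cover of $p$. Then $v \leq \overline{v}$, and so equality holds.

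I split into the cases $q=1$ and $q=2$. For $q=1$ the $1$-blocks of $p$ are its single vertices. Here I expect $\overline{w}_G(\{x\})$ may be nonzero, since it is the cheapest connected family of edges containing $x$. Fix a cover $p = s_1 \concat \cdots \concat s_n$ with $s_i$ a subpath of $S_i \in S$. Each vertex $x$ of $p$ lies in some $s_i$, hence in $S_i$. Therefore $\overline{w}_G(x) \leq v(S_i) \leq \sum_{s\in S} v(s)$, using nonnegativity of weights. Taking the supremum over $x$ gives $v(p) \leq \sum_s v(s)$.

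For $q=2$ I use \cref{item:window-short} of \cref{lem:window-of-concatenation}. Every $2$-block $b$ of $p$ is a subpath on two vertices, so it is a subpath of a single $s_i$, hence a $2$-block of $S_i$. Thus $\overline{w}_G(b) \leq v(S_i) \leq \sum_{s \in S} v(s)$, and taking the supremum over $b$ finishes. A path on fewer than two vertices has $v=0$, so it is trivial. One detail needs care: $v$ must vanish on vertices to be a valid path-complex weight (\cref{def:weighted-path-complex}). This holds for $q=2$ by the empty-supremum convention. For $q=1$ I would note that the argument uses only covering-subadditivity, so this convention is not needed.

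The main obstacle is recognizing why $q \leq 2$ is essential. For $q \geq 3$ a $q$-block of $p$ can straddle the overlap between two pieces $s_i, s_{i+1}$ without lying in either. \Cref{item:window-short} then fails, and the block's weight is not controlled by the pieces. This is exactly what \cref{item:window-short} addresses, so I expect the $q=2$ step to be the crux and the rest to be bookkeeping.
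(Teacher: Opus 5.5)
Your proposal is correct and follows the paper's proof: you show the density-$q$ weight is covering-subadditive because, by \cref{item:window-short} of \cref{lem:window-of-concatenation}, each $q$-block of $p$ with $q \leq 2$ is a $q$-block of a single element of the cover, and you then invoke the maximality clause of \cref{lem:covering-idempotent}. Treating $q=1$ separately (where the window lemma is not needed) and noting that the argument never uses the weight vanishing on vertices are harmless refinements of the same argument.
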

\begin{proof}
    The inequality is \cref{item:cov-decreases} of \cref{lem:covering-idempotent}.
    Suppose $q \leq 2$, write $w = w_{P^q(\cmpl{G})}$, and let $S$ cover a path $p$, say $p = s_1 \concat \cdots \concat s_n$ with $s_i$ a subpath of $S_i \in S$.
    A $q$-block $b$ of $p$ is a subpath of $p$ on $q \leq 2$ vertices, so by \cref{item:window-short} of \cref{lem:window-of-concatenation} it is a subpath of a single $s_i$, hence of $S_i$; and a subpath on $q$ vertices is a $q$-block.
    Since $w(S_i)$ is the supremum over the $q$-blocks of $S_i$, one of which is $b$,
    \[
        \inf\{\overline{w}_{G}(e) \mid b \subseteq e \in E_G\} \leq w(S_i) \leq \sum_{s \in S} w(s),
    \]
    the second inequality because $w$ is nonnegative.
    Taking the supremum over the $q$-blocks of $p$ gives $w(p) \leq \sum_{s \in S} w(s)$, so $w$ is covering-subadditive.
    By \cref{lem:covering-idempotent}, $\overline{w}$ is the largest covering-subadditive weight below $w$, whence $w \leq \overline{w}$.
\end{proof}

\begin{remark} \label{rmk:q-at-least-three}
    Equality fails for $q \geq 3$.
    Take $X = \{0,1,2,3\}$ with every nonempty subset an edge, of weight $5$ if it contains $3$ and $0$ otherwise.
    Any connected family whose union contains a vertex set meeting $3$ must include an edge containing $3$, so this weight is covering-subadditive and $G = \cmpl{G}$.
    With $q = 3$ we have $w_{P^3(G)}((0,1,2,3)) = 5$, contributed by the block $(1,2,3)$, while $(0,1,2)$ has weight $0$ and $(2,3)$ has weight $0$ for want of any $3$-block at all.
    Since $(0,1,2,3) = (0,1,2) \concat (2,3)$, the family $\{(0,1,2),(2,3)\}$ covers $(0,1,2,3)$, so $\overline{w}_{P^3(G)}((0,1,2,3)) = 0 < 5$.
    The obstruction is that a $q$-block of $p$ may straddle the junction between two pieces of a cover and so be a block of neither.
    For $q \leq 2$ a block has at most two vertices, and consecutive pieces already share a vertex, so no block can straddle; this is exactly \cref{item:window-short} of \cref{lem:window-of-concatenation}.
\end{remark}

As advertised, the two composites are related by an inequality.

\begin{proposition} \label{prop:hyp-composite-inequality}
Let $G=(X,E_G,w_G)$ be a weighted hypergraph and let $q \leq 2$. Then $w_{P^q(\cmpl{G})} \leq \overline{w}_{P^q(G)}$.
\end{proposition}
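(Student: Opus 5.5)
Write $w' = w_{P^q(\cmpl{G})}$ and $v = w_{P^q(G)}$. I would show $w' \leq \overline{v}$ by proving two facts: $w' \leq v$ pointwise, and $w'$ is covering-subadditive for the path-connected covering relation. Granting these, the maximality clause of \cref{lem:covering-idempotent} finishes the proof: $\overline{v}$ is the largest covering-subadditive weight below $v$, and $w'$ is such a weight, so $w' \leq \overline{v}$.

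For $w' \leq v$, fix a path $p$ and compare blockwise. For a $q$-block $b$ of $p$, the contribution to $v(p)$ is $\inf\{w_G(e) \mid b \subseteq e \in E_G\}$. The contribution to $w'(p)$ is $\inf\{\overline{w}_G(e) \mid b \subseteq e\}$, taken over the edge set of $\cmpl{G}$. Here we use the simplification stated after \cref{def:density-q-path}: with the induced weight, the infimum is attained at $e = b$ and equals $\overline{w}_G(b)$. Now take any edge $e \supseteq b$. The singleton $\{e\}$ is connected and its union contains $b$, so $\{e\}$ covers $b$. Hence $\overline{w}_G(b) \leq w_G(e)$. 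Taking the infimum over such $e$ bounds the $w'$-contribution of $b$ by its $v$-contribution. Taking the supremum over $q$-blocks then gives $w'(p) \leq v(p)$. If $b$ lies in no edge, the $v$-contribution is $\infty$ and the inequality is trivial. If $p$ has no $q$-block, both sides are $0$.

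Covering-subadditivity of $w'$ for $q \leq 2$ is the content of the proof of \cref{prop:hyp-idempotent}, which I would invoke directly. There it is shown that $w_{P^q(\cmpl{G})}$ satisfies $w(p) \leq \sum_{s \in S} w(s)$ whenever $S$ covers $p$. That argument used \cref{item:window-short} of \cref{lem:window-of-concatenation} to place each $q$-block inside a single piece, and then used nonnegativity. Alternatively, \cref{prop:hyp-idempotent} gives $\overline{w'} = w'$, and $\overline{w'}$ is covering-subadditive by \cref{item:cov-subadditive}; this yields the same property.

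I expect the main obstacle to be bookkeeping rather than substance. One issue is the domain of the weights: $\overline{v}$ is defined via covers by allowed paths of $P^q(G)$, while $w'$ lives on the paths of $P^q(\cmpl{G})$. The maximality argument should be run on the full set of elementary paths, with $v = \infty$ off the allowed paths, so that the two weights are compared on a common domain. The other issue is checking the attained-infimum simplification when $\cmpl{G}$ has edge set all nonempty subsets; I would verify this by noting that $\overline{w}_G$ is monotone under inclusion, since any cover of $e$ also covers every subset of $e$.
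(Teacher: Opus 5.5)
Your proof is correct and follows the paper's argument: a blockwise pointwise comparison $w_{P^q(\cmpl{G})} \leq w_{P^q(G)}$, followed by the fact that $w_{P^q(\cmpl{G})}$ is already complete (covering-subadditive) for $q \leq 2$, as shown in \cref{prop:hyp-idempotent}. Where the paper phrases the last step as ``completion is order-preserving and fixes $w_{P^q(\cmpl{G})}$'', you use the equivalent maximality clause of \cref{lem:covering-idempotent}, and you make explicit the extension of $w_{P^q(G)}$ by $\infty$ off its allowed paths, which the paper leaves implicit.
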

\begin{proof}
By \cref{item:cov-decreases} of \cref{lem:covering-idempotent}, $\overline{w}_{G} \leq w_G$, so for each $q$-block $b$,
\[
\inf \{ \overline{w}_{G}(e) \mid b \subseteq e \in E_G\} \leq \inf \{ w_G(e) \mid b \subseteq e \in E_G\}.
\]
Taking the supremum over the $q$-blocks of $p$ gives $w_{P^q(\cmpl{G})}(p) \leq w_{P^q(G)}(p)$; that is, $w_{P^q(\cmpl{G})} \leq w_{P^q(G)}$ pointwise.
Completing both sides and using that completion is order-preserving and that $w_{P^q(\cmpl{G})}$ is already complete (\cref{prop:hyp-idempotent}),
\[
w_{P^q(\cmpl{G})} = \overline{w}_{P^q(\cmpl{G})} \leq \overline{w}_{P^q(G)}. \qedhere
\]
\end{proof}

The inequality can be strict as the following example shows.

\begin{example} \label{ex:strict-inequality}
    Take $q=2$ and the weighted hypergraph $G$ on $X = \{1,2,3,4\}$ with edges
    \[
    (\{1,2,3\}, 1),\quad (\{2,3,4 \}, 5),\quad (\{1,4\}, 1),
    \]
    illustrated below, and consider the path $(3,4)$.
\begin{center}
\begin{tikzpicture}[
  vtx/.style={circle, fill=black, inner sep=1.5pt},
  bub/.style={line cap=round, line join=round, opacity=0.30},
]
  \draw[bub, blue,   line width=8mm]   (0,2) -- (2,2) -- (2,0);
  \draw[bub, orange, line width=5mm]   (2,2) -- (2,0) -- (0,0);
  \draw[bub, teal,   line width=6mm]   (0,2) -- (0,0);
  \node[vtx] at (0,2) {}; \node at (-0.3,2.3) {$1$};
  \node[vtx] at (2,2) {}; \node at (2.3,2.3) {$2$};
  \node[vtx] at (2,0) {}; \node at (2.3,-0.3) {$3$};
  \node[vtx] at (0,0) {}; \node at (-0.3,-0.3) {$4$};
  \node[blue]   at (1,2.75) {$\{1,2,3\}$, weight $1$};
  \node[orange] at (1,-0.75) {$\{2,3,4\}$, weight $5$};
  \node[teal]   at (-1.35,1) {$\{1,4\}$};
  \node[teal]   at (-1.35,0.65) {weight $1$};
\end{tikzpicture}
\end{center}
    Inducing the hypergraph weight first, $\overline{w}_{G}(\{3,4\}) = 2$: the family $\{\{1,2,3\},\{1,4\}\}$ is connected, its union contains $\{3,4\}$, and its cost is $1 + 1 = 2$, which beats the single edge $\{2,3,4\}$ of cost $5$.
    Hence $w_{P^2(\cmpl{G})}((3,4)) = 2$.
    Converting first, $w_{P^2(G)}((3,4)) = 5$, since $\{2,3,4\}$ is the only edge containing $\{3,4\}$.
    No cover lowers this: by \cref{item:window-short} of \cref{lem:window-of-concatenation}, a cover of $(3,4)$ must contain a path having $3$ and $4$ adjacent, and any such path has $(3,4)$ among its $2$-blocks, hence weight at least $5$.
    So $\overline{w}_{P^2(G)}((3,4)) = 5 \neq 2 = w_{P^2(\cmpl{G})}((3,4))$, and therefore $P^2(\cmpl{G}) \neq \cmpl{P^2(G)}$.
\end{example}

\subsection{Step 4: Distance comparison} \label{subsec:hyp-step4}

\begin{proposition} \label{prop:hyp-distance-comparison}
    Let $G = (X,E_G,w_G)$ and $H = (Y,E_H,w_H)$ be weighted hypergraphs. Then
    \[
    \dpathcomplex{P^q(\cmpl{G})}{P^q(\cmpl{H})} \leq \dhyper{G}{H}.
    \]
\end{proposition}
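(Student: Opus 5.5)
The plan is to fix an arbitrary correspondence $C \colon X \rightrightarrows Y$ and prove $\dispath(C) \leq \dishyper(C)$, where $\dispath$ is computed for the filtered path complexes generated by $P^q(\cmpl{G})$ and $P^q(\cmpl{H})$. Halving and taking the infimum over $C$ then gives the claim. Put $c = \dishyper(C)$; if $c = \infty$ there is nothing to prove.

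\textbf{Step 1: comparison for the raw density-$q$ weights.} Let $\sigma = ((x_0,y_0),\dots,(x_n,y_n))$ be an ordered lift, $p = \pi_1(\sigma)$, $p' = \pi_2(\sigma)$. By the remark after \cref{def:density-q-path}, since the completed weights are used,
\[
    w_{P^q(\cmpl{G})}(p) = \sup_b \overline{w}_{G}(b) \quad\text{and}\quad w_{P^q(\cmpl{H})}(p') = \sup_{b'} \overline{w}_{H}(b'),
\]
where $b$ ranges over the $q$-blocks of $p$ and $b'$ over those of $p'$. The $q$-blocks of $p$ and of $p'$ are indexed by the same starting positions $i$. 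The block $b_i$ of $p$ and the block $b'_i$ of $p'$ are the two projections of the sub-lift $((x_i,y_i),\dots,(x_{i+q-1},y_{i+q-1}))$. The definition of $\dishyper$ therefore gives $|\overline{w}_{G}(b_i) - \overline{w}_{H}(b'_i)| \leq c$ for every $i$. \Cref{lem:sup-comparison}, over the finite index set of starting positions (empty when $n+1 < q$, where both sides are $0$), then yields
\[
    |w_{P^q(\cmpl{G})}(p) - w_{P^q(\cmpl{H})}(p')| \leq c .
\]

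\textbf{Step 2: passing to entry weights.} The entry weights are the induced weights $l = \overline{w}$. For $q \leq 2$, \cref{prop:hyp-idempotent} says $\overline{w} = w$ on both sides, so Step 1 is already the conclusion.

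For $q \geq 3$ I would transport covers through the lift. First reduce an arbitrary cover $S$ of $p$ to the pieces $s_1,\dots,s_n$ of its concatenation. Each $s_j$ is a subpath of $S_j$, its $q$-blocks are among those of $S_j$, so $w(s_j) \leq w(S_j)$ and the cost does not increase. Each $s_j$ occupies a block of positions of $p$. The subpath $s'_j$ of $p'$ on the same positions is $\pi_2$ of a sub-lift, and $p' = s'_1 \concat \cdots \concat s'_n$ along the same overlaps. Step 1 applied to each sub-lift gives $w(s'_j) \leq w(s_j) + c$.

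\textbf{Main obstacle.} Summing the bounds of Step 2 gives only $l_Q(p') \leq l_P(p) + nc$, so the constant degrades with the number of pieces. What I would try first is to exploit the structure of the weights rather than sum naively. A piece on fewer than $q$ vertices has weight $0$ on both sides, so such pieces cost nothing. I would then try to show that an optimal cover can be rearranged so that at most one piece carries positive weight. Failing that, I would look for a bound on the whole sum using a single additional lift of $p$ itself, whose hypergraph weight $\overline{w}_{G}(\{x_0,\dots,x_n\})$ is comparable to the $H$-side value up to $c$.

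I am not certain this works in general. If it fails, I expect the correct statement for $q \geq 3$ to need either the weaker bound $l_Q(p') \leq w_{P^q(\cmpl{H})}(p') \leq w_{P^q(\cmpl{G})}(p) + c$, together with an argument that $l_P(p)$ and $w_{P^q(\cmpl{G})}(p)$ can be interchanged in the distortion, or a restriction to $q \leq 2$. This is the step I would scrutinize most carefully.
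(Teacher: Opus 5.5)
Your Step 1 is exactly the paper's entire proof. The paper compares the raw density-$q$ weights block by block using \cref{lem:sup-comparison} and then takes the supremum over lifts directly. In doing so it tacitly identifies the entry weight of the generated filtration with $w_{P^q(\cmpl{G})}$. You are right that $\dispath$ is defined through that entry weight, which is the induced weight $\overline{w}_{P^q(\cmpl{G})}$. Your treatment of $q \leq 2$ via \cref{prop:hyp-idempotent} is correct, and more careful than the paper, which says Step 3 is not used. For $q \geq 3$, however, the proposal does not give a proof. The transport-and-sum argument fails as you say, since the error grows to $nc$. None of your fallbacks is carried out, and restricting to $q \leq 2$ would prove a weaker statement.

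The missing idea is the observation you make in passing and then do not use: a path on fewer than $q$ vertices has no $q$-block, so its weight is $0$. The weight of \cref{def:density-q-path} is defined on every path on $X$. For $q \geq 3$, every path $p = (x_0,\dots,x_n)$ with $n \geq 1$ decomposes as $p = (x_0,x_1) \concat (x_1,x_2) \concat \cdots \concat (x_{n-1},x_n)$ along one-vertex overlaps. Hence $\{(x_{i-1},x_i)\}_{1 \leq i \leq n}$ is a path connected cover of $p$ whose pieces each have two vertices and weight $0$; for $n = 0$ the weight is $0$ anyway. It follows that $\overline{w}_{P^q(\cmpl{G})} \equiv 0 \equiv \overline{w}_{P^q(\cmpl{H})}$. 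This is the same phenomenon as in \cref{rmk:q-at-least-three}. Consequently $\dispath(C) = 0$ for every correspondence $C$, and the inequality holds trivially. No rearrangement of optimal covers or auxiliary lift is needed. Your transport argument also goes through if you apply it to this zero-cost cover: the transported pieces $(y_{i-1},y_i)$ again have two vertices and weight $0$, so no multiple of $\dishyper(C)$ accumulates. With this case added, your argument is a complete version of the paper's proof, and it makes explicit the identification of entry weight with raw weight that the paper leaves implicit.
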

\begin{proof}
Let $C \colon X \rightrightarrows Y$ be a correspondence; we show $\dispath(C) \leq \dishyper(C)$.
Let $\sigma=((x_0,y_0),\dots,(x_n,y_n))$ be an ordered lift through $C$, and set $p=\pi_1(\sigma)$ and $p'=\pi_2(\sigma)$.
Since the weight of $\cmpl{G}$ is already induced, \cref{def:density-q-path} gives
\[
    w_{P^q(\cmpl{G})}(p) = \sup_i \overline{w}_{G}(b_i), \qquad w_{P^q(\cmpl{H})}(p') = \sup_i \overline{w}_{H}(b'_i),
\]
where $b_i = (x_i,\dots,x_{i+q-1})$ and $b'_i = (y_i,\dots,y_{i+q-1})$ are the corresponding $q$-blocks.
For each $i$, the restriction $\sigma_i = ((x_i,y_i),\dots,(x_{i+q-1},y_{i+q-1}))$ is an ordered lift through $C$ with $\pi_1(\sigma_i) = b_i$ and $\pi_2(\sigma_i) = b'_i$, so
\[
    |\overline{w}_{G}(b_i) - \overline{w}_{H}(b'_i)| \leq \dishyper(C).
\]
As $p$ has finitely many $q$-blocks, \cref{lem:sup-comparison} applies and gives
\[
    |w_{P^q(\cmpl{G})}(p) - w_{P^q(\cmpl{H})}(p')| = \Big| \sup_i \overline{w}_{G}(b_i) - \sup_i \overline{w}_{H}(b'_i) \Big| \leq \dishyper(C).
\]
Taking the supremum over ordered lifts gives $\dispath(C) \leq \dishyper(C)$, and the infimum over correspondences gives the claim.
\end{proof}

Feeding this into stability for path complexes yields stability for hypergraphs.

\begin{theorem} \label{thm:hyp-stability}
Let $G = (X,E_G,w_G)$ and $H = (Y,E_H, w_H)$ be weighted hypergraphs, and let $k \in \mathbb{N}$. Then
\[
    \dbottleneck{\dgm_k(G)}{\dgm_k(H)} \leq 2\dhyper{G}{H}.
\]
\end{theorem}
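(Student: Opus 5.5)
The plan is to obtain \cref{thm:hyp-stability} as a direct instance of \cref{cor:transfer}, with the distance $d = \dhyper{\cdot}{\cdot}$ and the associated path complex $P^q(\cmpl{G})$, for a fixed density $q \geq 1$. By definition, the persistent path homology of a weighted hypergraph $G$ is that of the filtered path complex generated by the weighted path complex $P^q(\cmpl{G})$. In other words, $\dgm_k(G) = \dgm_k(P^q(\cmpl{G}))$, and similarly for $H$. This is exactly the form of the first hypothesis of \cref{cor:transfer}, with $P(\cmpl{G}) = P^q(\cmpl{G})$.

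Before invoking the corollary, I would check that $P^q(\cmpl{G})$ is a weighted path complex in the sense of \cref{def:weighted-path-complex}. Each vertex $(x)$ has fewer than $q$ vertices when $q \geq 2$, so it has no $q$-blocks and its weight is the empty supremum $0$. When $q = 1$, the block is $\{x\}$, whose induced weight is at most $w_G(\{x\})$. This can be nonzero, so for $q = 1$ I would either note that the generated filtration only uses the weight through $\overline{w}$, or else adopt the convention that vertex weights are reset to $0$. In either case the vertices lie in the $0$-th stage, so we genuinely have a filtered path complex.

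The second hypothesis of \cref{cor:transfer} is the inequality $\dpathcomplex{P^q(\cmpl{G})}{P^q(\cmpl{H})} \leq \dhyper{G}{H}$. This is \cref{prop:hyp-distance-comparison}. One subtlety deserves a check. The path complex distance in \cref{def:path-distance} is computed from the entry weights $l$ of the generated filtrations, which are $\overline{w}_{P^q(\cmpl{G})}$, whereas the proof of \cref{prop:hyp-distance-comparison} compares the raw weights $w_{P^q(\cmpl{G})}$.

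I expect this discrepancy to be the main obstacle. For $q \leq 2$, \cref{prop:hyp-idempotent} says the two weights coincide, so the comparison transfers verbatim. For $q \geq 3$, I would argue separately that the distortion bound survives completion. Let $C$ be a correspondence with distortion $c$ on raw weights over all ordered lifts, and let $p$ be a path in the first complex with a cover $S$ assembled as $p = s_1 \concat \cdots \concat s_n$. Lifting each $S_i$ through $C$, with choices of correspondents agreeing on the overlaps, produces a cover of the lifted path whose cost exceeds that of $S$ by at most $nc$. This bound grows with the number of pieces and is too weak, so this route needs care. If it fails, I would fall back on interpreting $\dgm_k(G)$ as computed from the sublevel filtration of the raw weight $w_{P^q(\cmpl{G})}$ itself. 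That filtration is a filtered path complex, because the raw weight is monotone under subpaths: the $q$-blocks of a subpath are $q$-blocks of the path. With that interpretation, \cref{thm:path-stability} applies directly and \cref{prop:hyp-distance-comparison} supplies exactly the needed bound.

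With both hypotheses in place, \cref{cor:transfer} yields $\dbottleneck{\dgm_k(G)}{\dgm_k(H)} \leq 2\dhyper{G}{H}$ for every $k$.
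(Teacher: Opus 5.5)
Your main line is the paper's: the theorem is \cref{cor:transfer} with hypothesis \cref{prop:hyp-distance-comparison}. The subtlety you raise is real, and the paper's one-line proof passes over it. $\dpathcomplex{\cdot}{\cdot}$ is computed from the entry weights $\overline{w}_{P^q(\cmpl{G})}$ of the generated filtrations, while the proof of \cref{prop:hyp-distance-comparison} compares only the raw weights $w_{P^q(\cmpl{G})}$. For $q \leq 2$, \cref{prop:hyp-idempotent} identifies the two, as you say. So that proposition is in fact needed for Step 4 in this range, despite the remark opening Step 3.

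The gap is at $q \geq 3$. Your lifting argument fails, as you note, and the fallback does not repair it. Computing $\dgm_k(G)$ from the sublevel filtration of the raw weight gives a different persistence module from the one in the statement. \cref{cor:transfer} and \cref{thm:path-stability} use the filtration generated by the weighted path complex $P^q(\cmpl{G})$, whose entry weight is $\overline{w}_{P^q(\cmpl{G})}$. Stability for the raw-weight filtration is true by the same computation, but it is a different theorem.

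The missing observation is that for $q \geq 3$ this entry weight vanishes identically:
\begin{itemize}
\item A path on two vertices has no $q$-block, so its weight is the empty supremum $0$.
\item Any path $(x_0,\dots,x_n)$ with $n \geq 1$ equals $(x_0,x_1) \concat (x_1,x_2) \concat \cdots \concat (x_{n-1},x_n)$ along one-vertex overlaps. Its consecutive two-vertex subpaths therefore form a path connected cover of total weight $0$. This is the same mechanism as in \cref{rmk:q-at-least-three}.
\end{itemize}
Hence $\overline{w}_{P^q(\cmpl{G})} \equiv 0 \equiv \overline{w}_{P^q(\cmpl{H})}$, so $\dispath(C) = 0$ for every correspondence $C$. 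Both generated filtrations are constant, and the hypothesis of \cref{cor:transfer} holds trivially. With this observation your argument covers every $q$ under the paper's definitions, with no reinterpretation needed.

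A smaller point: for $q = 1$ only your second option (resetting vertex weights to $0$) works. The induced weight of a vertex $(x)$ equals $\overline{w}_{G}(\{x\})$, which can be positive. Passing to $\overline{w}$ therefore does not by itself place the vertices in stage $0$.
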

\begin{proof}
By definition, the persistent path homology of a hypergraph is that of its density-$q$ path complex.
The claim is therefore \cref{cor:transfer}, whose hypothesis is \cref{prop:hyp-distance-comparison}.
\end{proof}
\section{Stability of Persistent Path Homology of Sequence Hypergraphs} \label{sec:sequence-hypergraph-stability}
In this section we deduce stability of persistent path homology of sequence hypergraphs, following the four steps outlined in \cref{subsec:common-pattern}.
A sequence hypergraph is a hypergraph whose edges are sequences of distinct vertices, rather than unordered subsets.
As in \cref{sec:hypergraph-stability}, filtered objects are primitive and weighted ones are a source of examples.

\begin{definition} \label{def:sequence-hypergraph}
A \emph{sequence hypergraph} is a pair $G = (X, E_G)$, where $X$ is a vertex set and $E_G \subseteq \mathbb{S}(X)$ is a set of edges, where $\mathbb{S}(X)$ denotes the set of nonempty totally ordered subsets of $X$, that is, nonempty subsets equipped with a linear order on their elements.
A \emph{filtered sequence hypergraph} is a family $G = \{G^\delta\}_{\delta \geq 0}$ of sequence hypergraphs on a fixed vertex set $X$, with inclusions $G^\delta \hookrightarrow G^{\delta'}$ for $\delta \leq \delta'$.
Its \emph{entry weight} $l_G \colon \mathbb{S}(X) \to \mathbb{R}_{\geq 0} \cup \{\infty\}$ is $l_G(e) = \inf\{\delta \geq 0 \mid e \in E_G^\delta\}$.
\end{definition}

\begin{definition} \label{def:weighted-sequence-hypergraph}
A \emph{weighted sequence hypergraph} is a triple $G = (X, E_G, w_G)$, where $(X, E_G)$ is a sequence hypergraph and $w_G \colon E_G \to \mathbb{R}_{\ge 0} \cup \{ \infty \}$ is a weight function.
\end{definition}

\subsection{Step 1: The covering relation} \label{subsec:dhyp-step1}

The covering relation for sequence hypergraphs is generated by \emph{sequence containment}, the order-sensitive analogue of the edge connectedness of \cref{sec:hypergraph-stability}.

\begin{definition} \label{def:sequence-contained}
    Let $G = (X,E_G)$ be a sequence hypergraph, and let $s = (x_0,\dots,x_n)$ be an ordered subset of $X$.
    A subset $S = \{e_1,\dots,e_m\} \subseteq E_G$ is \emph{connected} if there exists an ordering of its elements, again denoted $e_1,\dots,e_m$, with $e_i \cap e_{i+1} \neq \varnothing$ for all $1 \leq i \leq m-1$.
    We say that $s$ is \emph{sequence contained} in $S$, and that $S$ \emph{covers} $s$, if there is such an ordering for which, in addition:
    \begin{enumerate}
        \item $\{x_0,\dots,x_n\} \subseteq \bigcup_{i=1}^m e_i$;
        \item for each $i$, the overlap $s_i = s \cap e_i$, with the order inherited from $s$, is an ordered subsequence of the edge $e_i$;
        \item for each $1 \leq i \leq m-1$, $s_i \cap s_{i+1} \neq \varnothing$.
    \end{enumerate}
\end{definition}

The following lemma is the crux of this section: it is where the specific combinatorics of sequence hypergraphs enters, and it is exactly the composition axiom that makes the general framework apply.

\begin{lemma} \label{lem:sequence-covering-relation}
    Sequence containment is a covering relation on $\mathbb{S}(X)$ in the sense of \cref{def:covering-relation}.
    That is, writing ``$S$ covers $s$'' for ``$s$ is sequence contained in $S$'':
    \begin{enumerate}
        \item \textup{(Unit)} $\{s\}$ covers $s$;
        \item \textup{(Composition)} if $S = \{t_1, \dots, t_m\}$ covers $e$ and each $t_j$ is covered by $S_j \subseteq E_G$, then $\bigcup_{j=1}^m S_j$ covers $e$.
    \end{enumerate}
\end{lemma}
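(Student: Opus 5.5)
The plan is to verify the two axioms directly from \cref{def:sequence-contained}. The unit axiom is immediate: take $m = 1$ and $e_1 = s$. Then $s_1 = s \cap s = s$ is trivially an ordered subsequence of $s$, condition (1) holds because the union is $s$ itself, and condition (3) is vacuous. All the work is in the composition axiom. For that I would build an ordering of $\bigcup_j S_j$ by \emph{block concatenation} and then check conditions (1)--(3) for $e$ one at a time.

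\textbf{Setup.} Fix the ordering $t_1, \dots, t_m$ of $S$ that witnesses that $S$ covers $e$, and write $u_j = e \cap t_j$ for the overlaps. For each $j$, fix an ordering $f_{j,1}, \dots, f_{j,r_j}$ of $S_j$ witnessing that $S_j$ covers $t_j$, and write $v_{j,k} = t_j \cap f_{j,k}$. The candidate ordering of $\bigcup_j S_j$ lists the blocks $S_1, S_2, \dots, S_m$ in order, each block in its own witnessing order. If an edge lies in several $S_j$, it is kept only at its first occurrence.

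\textbf{Condition (1).} This follows from transitivity of containment:
\[
    e \subseteq \bigcup_j t_j \subseteq \bigcup_j \bigcup_k f_{j,k}.
\]

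\textbf{Condition (2).} I would show that the order $e$ induces on $e \cap f_{j,k}$ agrees with the order of $f_{j,k}$. On the part lying in $t_j$ this is a two-step compatibility. The set $e \cap t_j \cap f_{j,k}$ carries the same order from $e$ as from $t_j$, because $u_j$ is a subsequence of $t_j$. It carries the same order from $t_j$ as from $f_{j,k}$, because $v_{j,k}$ is a subsequence of $f_{j,k}$. Composing these two facts handles every vertex of $e \cap f_{j,k}$ that lies in $t_j$.

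\textbf{Condition (3) within a block.} For consecutive edges inside one block I would show that $(e \cap f_{j,k}) \cap (e \cap f_{j,k+1})$ is nonempty. The natural route is to use that $v_{j,k} \cap v_{j,k+1} \neq \varnothing$ and then try to push a common vertex into $u_j$.

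\textbf{Condition (3) at block junctions.} Here I need a common vertex of $e$ shared by the last edge of block $j$ and the first edge of block $j+1$. The plan is to pick $x \in u_j \cap u_{j+1}$ and choose the witnessing orderings of $S_j$ and $S_{j+1}$ so that $x$ lies in the last edge of block $j$ and in the first edge of block $j+1$. This should be arranged by rotating or reversing the chain orderings, where that is permitted.

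\textbf{Main obstacle.} I expect the main obstacle to be exactly the vertices of $f_{j,k}$ that lie outside $t_j$, together with edges of $S_j$ whose overlap with $e$ is empty. Definition~\ref{def:sequence-contained} demands condition (3) for \emph{every} consecutive pair in the ordering of the full union. Likewise, condition (2) must hold for all of $e \cap f_{j,k}$, not only for its part in $t_j$. My first attempt would be to prove that each $f_{j,k}$ meets $e$ along $u_j$, by an induction along the chain $f_{j,1}, \dots, f_{j,r_j}$ that uses condition (3) for $S_j$ covering $t_j$.

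If that induction fails, I would test the statement on a small example: a single $t_1$, covered by two edges, one of which misses $e$ entirely. Such an example would show whether condition (3) can be met at all, and therefore whether the claim needs the overlaps reinterpreted before the argument goes through.
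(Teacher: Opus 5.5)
There is a genuine gap, and it cannot be closed for \cref{def:sequence-contained} as written. You never establish conditions (2) and (3), and in general no ordering of $\bigcup_j S_j$ satisfies them, because the composition axiom is false for that definition.

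The test you propose at the end already shows this. Let $e=(a)$ and $t_1=(a,b)$. Then $\{t_1\}$ covers $e$ with $m=1$. Also $S_1=\{(a,b),(b,d)\}$ covers $t_1$: the two edges meet in $b$, and the overlaps $(a,b)$ and $(b)$ are ordered subsequences sharing $b$. But $S_1=\bigcup_j S_j$ does not cover $e$, because $(a)\cap(b,d)=\varnothing$ makes (3) fail in both orderings.

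This refutes your induction target that every $f_{j,k}$ meets $e$ along $u_j$. That target would not suffice anyway. For $e=(a,c)$, $t_1=(a,b,c)$ and $S_1=\{(a,b),(b,c)\}$, each edge meets $e$, yet the overlaps $(a)$ and $(c)$ are disjoint, because $S_1$ chains through the vertex $b\in t_1\setminus e$.

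Condition (2) breaks the same way at vertices outside $t_j$, exactly where your two-step compatibility argument stops. $\{(a,b),(a,c)\}$ covers $e=(a,c)$, with overlaps $(a)$ and $(a,c)$. Moreover $\{(c,a,b)\}$ covers $(a,b)$ and $\{(a,c)\}$ covers $(a,c)$. Yet the union $\{(c,a,b),(a,c)\}$ violates (2) in every ordering: $e$ induces the order $(a,c)$ on $e\cap(c,a,b)$, which is not a subsequence of $(c,a,b)$.

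Your junction step has a separate problem. A chain ordering can be reversed but not rotated, and a vertex of $u_j\cap u_{j+1}$ may lie only in an interior edge of the chain for $S_j$.

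The paper's proof does not get around this. It verifies condition (1), shows that consecutive blocks $S_j$, $S_{j+1}$ are linked through a vertex $x_j\in t_j\cap t_{j+1}$, and then concludes. Conditions (2) and (3) are never addressed. Even the linking only gives connectivity of the intersection graph. It does not give the single linear ordering, with consecutive edges meeting, that \cref{def:sequence-contained} calls connected; a star-shaped union has no such ordering.

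What that argument genuinely proves is composition for the order-blind relation ``$S$ is connected in the sense of \cref{def:hypergraph-connected} and $s\subseteq\bigcup S$''. So your suspicion that the claim needs the overlaps reinterpreted is correct. The lemma holds only after \cref{def:sequence-contained} is changed. One option is to drop (2)--(3). Another is to replace them with a reassembly condition in the style of \cref{def:path-connected}, for which composition is proved in \cref{lem:path-covering-relation}.
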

\begin{proof}
    The unit axiom is immediate.
    For composition, set $S = \bigcup_{j=1}^m S_j$.
    Since $\{t_1,\dots,t_m\}$ covers $e$ we have $e \subseteq \bigcup_j t_j$, and since each $S_j$ covers $t_j$ we have $t_j \subseteq \bigcup_{a \in S_j} a$; hence
    \[
        e \subseteq \bigcup_{j=1}^m t_j \subseteq \bigcup_{a \in S} a.
    \]
    It remains to check that $S$ is connected.
    Since $\{t_1, \dots, t_m\}$ is connected, after reordering we may assume $t_j \cap t_{j+1} \neq \varnothing$ for every $1 \leq j \leq m-1$.
    For each such $j$, choose a vertex $x_j \in t_j \cap t_{j+1}$.
    Since $S_j$ covers $t_j$ there is an edge $a_j \in S_j$ with $x_j \in a_j$, and since $S_{j+1}$ covers $t_{j+1}$ there is an edge $a_{j+1} \in S_{j+1}$ with $x_j \in a_{j+1}$; therefore $a_j \cap a_{j+1} \neq \varnothing$.
    Thus each $S_j$ is linked to $S_{j+1}$, so $S$ is connected, and $S$ covers $e$.
\end{proof}

Applying \cref{con:generated-filtration} to this covering relation gives the induced weight
\[
    \overline{w}_{G}(s) = \inf\Big\{ \textstyle\sum_{e \in S} w_G(e) \;\Big|\; S \text{ covers } s \Big\}
\]
and the filtered sequence hypergraph \emph{generated by} $G$, with entry weight $l_G = \overline{w}_{G}$; each sublevel set is a set of ordered subsets of $X$, hence again a sequence hypergraph.

\begin{definition} \label{def:dhypergraph-distance}
    Let $G = (X,E_G,w_G)$ and $H = (Y,E_H,w_H)$ be weighted sequence hypergraphs.
    The \emph{sequence hypergraph distortion} of a correspondence $C \colon X \rightrightarrows Y$ is
    \[
        \disDhyper(C) = \sup_\sigma \big| \overline{w}_{G}(\pi_1(\sigma)) - \overline{w}_{H}(\pi_2(\sigma)) \big|,
    \]
    where $\sigma$ ranges over finite ordered lifts through $C$, and the \emph{sequence hypergraph distance} between $G$ and $H$ is
    \[
        \dDhyper{G}{H} = \tfrac{1}{2}\inf \{ \disDhyper(C) \mid C \in \Corr(X,Y)\}.
    \]
\end{definition}

\subsection{Step 2: The associated path complex} \label{subsec:dhyp-step2}

We recall how a sequence hypergraph gives rise to a path complex, and hence to path homology.

\begin{definition} \label{def:sequence-hypergraph-path}
    Let $G = (X,E_G,w_G)$ be a weighted sequence hypergraph and $q \geq 1$.
    The \emph{density-$q$ path complex} $P^q(G)$ is the weighted path complex on $X$ whose paths are those sequences $p = (x_0,\dots,x_n)$ in which any $q$ or fewer consecutive vertices form an ordered subsequence of some edge $e \in E_G$, with weight function
    \[
    w_{P^q(G)}(p) = \sup\big\{ \inf\{w_G(e) \mid b \text{ an ordered subsequence of } e \in E_G \} \;\big|\; b = (x_i, \dots, x_{i+q-1}) \text{ a } q\text{-block of } p \big\}.
    \]
    As in \cref{sec:hypergraph-stability}, a \emph{$q$-block} of $p$ is a subpath of $p$ on $q$ vertices.
\end{definition}

As in \cref{sec:hypergraph-stability}, when the induced weight $\overline{w}_{G}$ is used in place of $w_G$, each infimum is attained at $e = b$ and the weight simplifies to
    \[
    w_{P^q(G)}(p) = \sup\{ \overline{w}_{G}(b) \mid b \text{ a } q\text{-block of } p \}.
    \]

\subsection{Step 3: The induced path weight} \label{subsec:dhyp-step3}

On the completed object, and for $q \leq 2$, inducing the sequence-hypergraph weight and then converting agrees with converting and then inducing.
As in \cref{sec:hypergraph-stability}, neither Step 4 nor the stability theorem depends on this.

\begin{proposition} \label{prop:dhyp-path-idempotent}
    Let $G = (X,E_G,w_G)$ be a weighted sequence hypergraph.
    Then $\overline{w}_{P^q(\cmpl{G})} \leq w_{P^q(\cmpl{G})}$, with equality when $q \leq 2$.
\end{proposition}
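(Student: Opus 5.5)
The plan mirrors the proof of \cref{prop:hyp-idempotent}. The inequality $\overline{w}_{P^q(\cmpl{G})} \leq w_{P^q(\cmpl{G})}$ is \cref{item:cov-decreases} of \cref{lem:covering-idempotent}. For that lemma to apply, the path-connected covering relation of \cref{def:path-connected} must be a covering relation on elementary paths, which is \cref{lem:path-covering-relation}. For equality when $q \leq 2$, write $w = w_{P^q(\cmpl{G})}$. The closing clause of \cref{lem:covering-idempotent} says $\overline{w}$ is the largest covering-subadditive weight below $w$. So it suffices to show that $w$ itself is covering-subadditive; then $w \leq \overline{w}$, and combined with the first inequality this gives equality.

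To check subadditivity, let $S$ cover a path $p$, say $p = s_1 \concat \cdots \concat s_n$ with each $s_i$ a subpath of some $S_i \in S$. We may assume $p$ has at least $q$ vertices, since otherwise $w(p) = 0$ as a supremum over the empty set. Fix a $q$-block $b$ of $p$. Because $q \leq 2$, \cref{item:window-short} of \cref{lem:window-of-concatenation} places $b$ inside a single $s_i$, hence inside $S_i$. As a subpath on $q$ vertices, $b$ is then a $q$-block of $S_i$. By \cref{def:sequence-hypergraph-path}, the contribution of $b$ to $w(p)$ is the infimum of $\overline{w}_G(e)$ over edges $e$ containing $b$ as an ordered subsequence. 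This is the same quantity that $b$ contributes to $w(S_i)$, so it is at most $w(S_i) \leq \sum_{s \in S} w(s)$, using nonnegativity of $w$. Taking the supremum over $q$-blocks $b$ of $p$ gives $w(p) \leq \sum_{s\in S} w(s)$.

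The main point requiring care is that the block weight depends only on $b$ and not on the ambient path, so that it transfers verbatim from $p$ to $S_i$. The definition makes this immediate, and it is the only place where the ordered (sequence) structure could differ from \cref{sec:hypergraph-stability}. There is no interaction with the sequence-containment covering relation of \cref{lem:sequence-covering-relation} in this argument. That relation only determines the values $\overline{w}_G(e)$, which enter as fixed numbers.

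A second point to check is the case $S_i$ with $w(S_i) = \infty$ or paths not in the support. These cause no trouble, since the inequality is trivial when the right-hand side is infinite, and $w$ is defined on all elementary paths via the formula.
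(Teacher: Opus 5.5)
Your proposal is correct and is essentially the paper's proof. The inequality is the unit-axiom bound of \cref{lem:covering-idempotent}; for $q \leq 2$ you show $w_{P^q(\cmpl{G})}$ is covering-subadditive by using \cref{item:window-short} of \cref{lem:window-of-concatenation} to put each $q$-block of $p$ inside a single member of the cover, and then conclude by the maximality clause of \cref{lem:covering-idempotent}. Your extra remarks, that a block's contribution depends only on $b$ and that infinite weights and paths outside the complex cause no trouble, are sound and make explicit what the paper leaves implicit when it says the hypergraph argument applies verbatim.
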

\begin{proof}
    The argument for \cref{prop:hyp-idempotent} applies verbatim, only with ordered subsequences in place of subsets.
    Writing $w = w_{P^q(\cmpl{G})}$ and $q \leq 2$, a $q$-block of $p$ is a subpath of $p$ on at most two vertices, so by \cref{item:window-short} of \cref{lem:window-of-concatenation} each such block $b$ is a $q$-block of a single element of any cover $S$ of $p$; hence $w(p) \leq \sum_{s \in S} w(s)$ and $w$ is covering-subadditive.
    By \cref{lem:covering-idempotent} the induced weight $\overline{w}$ is the largest such weight below $w$, whence $\overline{w} = w$.
    For $q \geq 3$ equality fails, by the example of \cref{rmk:q-at-least-three} with each edge given an arbitrary order.
\end{proof}

The two hypergraph theories do not reduce to one another, as the following remark explains.

\begin{remark} \label{rmk:forgetful-lax}
    It is tempting to try to deduce the stability of one of the hypergraph theories from the other.
    Forgetting the ordering of each edge gives a functor $U$ from weighted sequence hypergraphs to weighted hypergraphs, sending $G = (X, E_G)$ to $(X, \{\underline{e} : e \in E_G\})$, where $\underline{e}$ is the underlying set of the edge $e$.
    However, $U$ does not commute with the induced weight, and so does not carry one stability result to the other.
    The reason is the order-sensitivity of sequence containment: an ordered subset $s$ is sequence contained in $S$ only if each overlap $s \cap e_i$ is an \emph{ordered subsequence} of the edge $e_i$, whereas the covering relation of \cref{def:hypergraph-connected} ignores order entirely.
    A family $S$ that covers $s$ in $U(G)$ may therefore fail to sequence contain $s$ in $G$, so the sequence induced weight dominates the undirected one,
    \[
        \overline{w}_{U(G)}(\underline{s}) \leq \overline{w}_{G}(s),
    \]
    with strict inequality in general.
    The two constructions thus generate genuinely different filtrations, and neither stability theorem specializes to the other.
    Nor can one impose orders to reverse the comparison: since the order on an edge need not arise by restricting any single order on $X$, there is no functor from hypergraphs to sequence hypergraphs inverting $U$.
\end{remark}

\subsection{Step 4: Distance comparison} \label{subsec:dhyp-step4}

\begin{proposition} \label{prop:dhyp-distance-comparison}
    Let $G = (X,E_G,w_G)$ and $H = (Y,E_H,w_H)$ be weighted sequence hypergraphs. Then
    \[
    \dpathcomplex{P^q(\cmpl{G})}{P^q(\cmpl{H})} \leq \dDhyper{G}{H}.
    \]
\end{proposition}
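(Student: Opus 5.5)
The plan mirrors the proof of \cref{prop:hyp-distance-comparison}: fix a correspondence $C \colon X \rightrightarrows Y$ and show $\dispath(C) \leq \disDhyper(C)$ for the filtered path complexes generated by $P^q(\cmpl{G})$ and $P^q(\cmpl{H})$. Taking the infimum over $C$ and halving then gives the claim. Throughout, I read $l_{P^q(\cmpl{G})}$ as the weight $w_{P^q(\cmpl{G})}$, exactly as in the hypergraph case.

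Fix an ordered lift $\sigma = ((x_0,y_0),\dots,(x_n,y_n))$ through $C$, and put $p = \pi_1(\sigma)$ and $p' = \pi_2(\sigma)$. Because $\cmpl{G}$ carries the induced weight, the simplified formula after \cref{def:sequence-hypergraph-path} gives
\[
    w_{P^q(\cmpl{G})}(p) = \sup_i \overline{w}_{G}(b_i), \qquad w_{P^q(\cmpl{H})}(p') = \sup_i \overline{w}_{H}(b'_i).
\]
Here $b_i = (x_i,\dots,x_{i+q-1})$ and $b'_i = (y_i,\dots,y_{i+q-1})$, and $i$ ranges over the same finite index set on both sides, since $p$ and $p'$ have the same length. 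Paths with fewer than $q$ vertices have weight $0$ on both sides, so they contribute nothing.

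Next I restrict $\sigma$ to positions $i,\dots,i+q-1$. This gives an ordered lift $\sigma_i$ with $\pi_1(\sigma_i) = b_i$ and $\pi_2(\sigma_i) = b'_i$. By the definition of $\disDhyper$,
\[
    |\overline{w}_{G}(b_i) - \overline{w}_{H}(b'_i)| \leq \disDhyper(C)
\]
for every $i$. \Cref{lem:sup-comparison} then bounds the difference of the suprema by $\disDhyper(C)$. Taking the supremum over all lifts $\sigma$ yields $\dispath(C) \leq \disDhyper(C)$.

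The main obstacle is the simplification step, that is, making sense of $\overline{w}_G(b)$ for a block $b$ that is a sequence rather than an element of $\mathbb{S}(X)$. A block may contain repeated vertices, either because the path is irregular or because a vertex recurs within $q$ steps. I would handle this with one convention, applied identically on both sides. One option is to treat $\overline{w}_G(b) = \infty$ for blocks that are not ordered subsets. The other is to read the infimum in \cref{def:sequence-hypergraph-path} literally, as the induced weight of $\cmpl{G}$ evaluated at $b$ via sequence containment. Either way, the quantity compared in $\disDhyper$ is by definition the same one that appears in the path weight, so the termwise inequality holds on the nose.

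A secondary concern is $\infty$ values, where $|\infty - \infty|$ is ambiguous. I would adopt the convention already implicit in \cref{def:path-distance}, and check that \cref{lem:sup-comparison} extends to extended reals under that convention by the same argument.
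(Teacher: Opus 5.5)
Your proposal is correct and is essentially the paper's proof. You fix $C$, write both path weights as suprema of $\overline{w}_G$ and $\overline{w}_H$ over corresponding $q$-blocks, and restrict $\sigma$ to each block to get the termwise bound by $\disDhyper(C)$; then \cref{lem:sup-comparison} bounds the difference of suprema, and you take the supremum over lifts and the infimum over correspondences. Your conventions for repeated vertices and $\infty$ only make explicit what the paper leaves implicit. The identification you flag at the outset, of the entry weight $l_{P^q(\cmpl{G})} = \overline{w}_{P^q(\cmpl{G})}$ with $w_{P^q(\cmpl{G})}$, is also made tacitly in the paper's proof; it is justified for $q \leq 2$ by \cref{prop:dhyp-path-idempotent}, but for $q \geq 3$ the two weights can differ (cf.\ \cref{rmk:q-at-least-three}), so neither argument as written covers that range.
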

\begin{proof}
Let $C \colon X \rightrightarrows Y$ be a correspondence; we show $\dispath(C) \leq \disDhyper(C)$.
Let $\sigma=((x_0,y_0),\dots,(x_n,y_n))$ be an ordered lift through $C$, and set $p=\pi_1(\sigma)$ and $p'=\pi_2(\sigma)$.
Since the weight of $\cmpl{G}$ is already induced, \cref{def:sequence-hypergraph-path} gives
\[
    w_{P^q(\cmpl{G})}(p) = \sup_i \overline{w}_{G}(b_i), \qquad w_{P^q(\cmpl{H})}(p') = \sup_i \overline{w}_{H}(b'_i),
\]
where $b_i = (x_i,\dots,x_{i+q-1})$ and $b'_i = (y_i,\dots,y_{i+q-1})$ are the corresponding $q$-blocks.
For each $i$, the restriction $\sigma_i = ((x_i,y_i),\dots,(x_{i+q-1},y_{i+q-1}))$ is an ordered lift through $C$ with $\pi_1(\sigma_i) = b_i$ and $\pi_2(\sigma_i) = b'_i$, so
\[
    |\overline{w}_{G}(b_i) - \overline{w}_{H}(b'_i)| \leq \disDhyper(C).
\]
As $p$ has finitely many $q$-blocks, \cref{lem:sup-comparison} applies and gives
\[
    |w_{P^q(\cmpl{G})}(p) - w_{P^q(\cmpl{H})}(p')| = \Big| \sup_i \overline{w}_{G}(b_i) - \sup_i \overline{w}_{H}(b'_i) \Big| \leq \disDhyper(C).
\]
Taking the supremum over ordered lifts gives $\dispath(C) \leq \disDhyper(C)$, and the infimum over correspondences gives the claim.
\end{proof}

Feeding this into stability for path complexes yields stability for sequence hypergraphs.

\begin{theorem} \label{thm:dhyp-stability}
Let $G = (X,E_G,w_G)$ and $H = (Y,E_H, w_H)$ be weighted sequence hypergraphs, and let $k \in \mathbb{N}$. Then
\[
    \dbottleneck{\dgm_k(G)}{\dgm_k(H)} \leq 2\dDhyper{G}{H}.
\]
\end{theorem}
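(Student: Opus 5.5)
The plan mirrors the proof of \cref{thm:hyp-stability} exactly: reduce to \cref{cor:transfer}, with \cref{prop:dhyp-distance-comparison} supplying its hypothesis. First I will make explicit that, by definition, the persistent path homology of a weighted sequence hypergraph $G$ is the persistent path homology of the density-$q$ path complex $P^q(\cmpl{G})$ of \cref{def:sequence-hypergraph-path}. That path complex is built from the completed weight $\overline{w}_G$ coming from the sequence-containment covering relation of \cref{lem:sequence-covering-relation}, and is then filtered by the filtration it generates. Consequently $\dgm_k(G) = \dgm_k(P^q(\cmpl{G}))$, and likewise for $H$; this is precisely the setting of \cref{cor:transfer}, with $P(\cmpl{G}) = P^q(\cmpl{G})$.

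Next I take $d = \dDhyper{\cdot}{\cdot}$ from \cref{def:dhypergraph-distance}. The required inequality
\[
    \dpathcomplex{P^q(\cmpl{G})}{P^q(\cmpl{H})} \leq \dDhyper{G}{H}
\]
is exactly \cref{prop:dhyp-distance-comparison}. Applying \cref{cor:transfer} then gives
\[
    \dbottleneck{\dgm_k(G)}{\dgm_k(H)} \leq 2\dDhyper{G}{H}
\]
for every $k$ and every density $q \geq 1$, which is the statement of the theorem.

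The only point that needs care is a subtlety hidden in \cref{cor:transfer}, and I expect it to be the main obstacle. The corollary invokes \cref{thm:path-stability} on the \emph{filtered} path complexes generated by the weighted path complexes $P^q(\cmpl{G})$ and $P^q(\cmpl{H})$. Their entry weights are the path-complex completions $\overline{w}_{P^q(\cmpl{G})}$, whereas \cref{prop:dhyp-distance-comparison} bounds differences of the uncompleted weights $w_{P^q(\cmpl{G})}$.

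For $q \leq 2$ these agree by \cref{prop:dhyp-path-idempotent}, so nothing further is needed. For general $q$ I would argue either that the paper's convention measures $\dpathcomplex{}{}$ through the weights as written, or, failing that, that completion is $1$-Lipschitz along ordered lifts. For the latter, I would try to transport a cover $S$ of $p = \pi_1(\sigma)$ through the lift $\sigma$ piece by piece, obtaining a cover of $\pi_2(\sigma)$ whose pieces are themselves images of ordered lifts, so that each term changes by at most the distortion. That argument only bounds differences summed over the pieces, however, so I would first check whether the intended reading is the uncompleted one and rely on \cref{prop:dhyp-distance-comparison} as stated.
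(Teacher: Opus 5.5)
Your argument is exactly the paper's proof: the theorem is \cref{cor:transfer} with $d = \dDhyper{\cdot}{\cdot}$, and its hypothesis is supplied by \cref{prop:dhyp-distance-comparison}. The completed-versus-uncompleted weight issue you flag for $q \geq 3$ is a fair observation, but it sits inside the proof of \cref{prop:dhyp-distance-comparison}, which computes $\dispath$ with $w_{P^q(\cmpl{G})}$ as the entry weight (justified for $q \leq 2$ by \cref{prop:dhyp-path-idempotent}); the paper's proof of this theorem, like yours, simply takes that proposition as given.
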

\begin{proof}
By definition, the persistent path homology of a sequence hypergraph is that of its density-$q$ path complex.
The claim is therefore \cref{cor:transfer}, whose hypothesis is \cref{prop:dhyp-distance-comparison}.
\end{proof}
\section{Stability of Persistent Path Homology of Digraphs} \label{sec:network-stability}
In this section we deduce stability of persistent path homology of digraphs, recovering the result of Chowdhury and M\'emoli \cite{chowdhury2018persistent}, and following the four steps outlined in \cref{subsec:common-pattern}.
As in the previous sections, filtered objects are primitive and weighted ones are a source of examples.

\begin{definition} \label{def:filtered-digraph}
A \emph{digraph} is a pair $G = (X, E_G)$, where $X$ is a vertex set and $E_G \subseteq X \times X$ is a set of directed edges.
A \emph{filtered digraph} is a family $G = \{G^\delta\}_{\delta \geq 0}$ of digraphs on a fixed vertex set $X$, with inclusions $G^\delta \hookrightarrow G^{\delta'}$ for $\delta \leq \delta'$.
Its \emph{entry weight} $l_G \colon X \times X \to \mathbb{R}_{\geq 0} \cup \{\infty\}$ is $l_G(a,b) = \inf\{\delta \geq 0 \mid (a,b) \in E_G^\delta\}$.
\end{definition}

\begin{definition} \label{def:weighted-digraph}
A \emph{weighted digraph} is a triple $G = (X, E_G, w_G)$, where $(X, E_G)$ is a digraph and $w_G \colon E_G \to \mathbb{R}_{\ge 0} \cup \{ \infty \}$ is a weight function.
\end{definition}

\subsection{Step 1: The covering relation} \label{subsec:net-step1}

The covering relation for digraphs is generated by directed paths, the motivating example of \cref{sec:framework}.

\begin{definition} \label{def:digraph-covering}
    Let $G = (X,E_G)$ be a digraph and $(a,b) \in X \times X$.
    A subset $T \subseteq E_G$ \emph{covers} $(a,b)$ if it is a directed path from $a$ to $b$, that is, $T = \{(a,x_1),(x_1,x_2),\dots,(x_{m-1},b)\}$ for some vertices $x_1,\dots,x_{m-1} \in X$.
\end{definition}

This is a covering relation on $X \times X$ in the sense of \cref{def:covering-relation}: the unit axiom is given by the length-one path $(a,b)$, and composition by concatenation of directed paths.

Applying \cref{con:generated-filtration} to this covering relation gives the induced weight
\[
    \overline{w}_{G}(a,b) = \inf\Big\{ \textstyle\sum_{t \in T} w_G(t) \;\Big|\; T \text{ covers } (a,b) \Big\}
\]
and the filtered digraph \emph{generated by} $G$, with entry weight $l_G = \overline{w}_{G}$; each sublevel set is a set of vertex pairs, hence again a digraph.
This entry weight is exactly the shortest-path (Dijkstra) cost, and its idempotency reflects the familiar fact that shortest paths concatenate optimally.

\begin{definition} \label{def:network-distance}
    Let $G = (X,E_G,w_G)$ and $H = (Y,E_H,w_H)$ be weighted digraphs.
    The \emph{network distortion} of a correspondence $C \colon X \rightrightarrows Y$ is
    \[
        \disnetwork(C) = \sup \big\{ | \overline{w}_{G}(x,x') - \overline{w}_{H}(y,y') | \mid (x,y),(x',y') \in C \big\},
    \]
    and the \emph{network distance} between $G$ and $H$ is
    \[
        \dnetwork{G}{H} = \tfrac{1}{2}\inf \{ \disnetwork(C) \mid C \in \Corr(X,Y)\}.
    \]
\end{definition}

\subsection{Step 2: The associated path complex} \label{subsec:net-step2}

We recall how a digraph gives rise to a path complex, and hence to path homology.

\begin{definition} \label{def:digraph-path}
    Let $G = (X,E_G,w_G)$ be a weighted digraph.
    The \emph{associated path complex} $P(G)$, due to Grigor'yan, Lin, Muranov, and Yau \cite{grigor2012homologies, grigor2020pathcomplexes}, is the weighted path complex on $X$ whose paths are the directed paths of $G$, with weight function
    \[
    w_{P(G)}((x_0,\dots,x_n)) = \sup_{0 \leq i < n} w_G(x_i,x_{i+1}).
    \]
\end{definition}

Regarding a digraph as a sequence hypergraph whose edges all have size $2$, the path complex $P(G)$ coincides with the density-$2$ path complex $P^2(G)$ of \cref{sec:sequence-hypergraph-stability}: a sequence of vertices has all of its $2$-blocks among the edges of $G$ precisely when it is a directed path.
The two sections do not otherwise specialize to one another, as the following remark records.
As before, when the induced weight $\overline{w}_{G}$ is used in place of $w_G$, the path weight simplifies accordingly.

\begin{remark} \label{rmk:not-a-specialization}
    Although the path complexes agree, the covering relation of \cref{def:digraph-covering} is not the restriction of sequence containment, \cref{def:sequence-contained}, to edges of size $2$, and the two induced weights genuinely differ.
    In one direction, for the digraph with edges $(a,c)$ and $(a,b)$, the family $\{(a,c),(a,b)\}$ sequence contains $(a,b)$, since the overlaps $(a)$ and $(a,b)$ meet, but it is not a directed path from $a$ to $b$.
    In the other direction, a directed path of length at least two fails to sequence contain its endpoints, since its middle edges meet neither of them and condition (3) of \cref{def:sequence-contained} fails.
    For a concrete comparison, take $E_G = \{(a,x),(x,b),(a,b)\}$ with weights $1$, $1$, and $10$.
    The covering relation of \cref{def:digraph-covering} gives $\overline{w}_{G}(a,b) = 2$, realized by the path through $x$, whereas the only families sequence containing $(a,b)$ are those containing the edge $(a,b)$ itself, so sequence containment gives $\overline{w}_{G}(a,b) = 10$.
    For this reason the present section is developed independently of \cref{sec:sequence-hypergraph-stability}.
\end{remark}

\subsection{Step 3: The induced path weight} \label{subsec:net-step3}

On the completed object, inducing the digraph weight and then converting agrees with converting and then inducing.

\begin{proposition} \label{prop:net-path-idempotent}
    Let $G = (X,E_G,w_G)$ be a weighted digraph. Then $\overline{w}_{P(\cmpl{G})} = w_{P(\cmpl{G})}$.
\end{proposition}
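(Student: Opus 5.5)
The plan follows the proof of \cref{prop:hyp-idempotent}, in the case $q = 2$. Write $w = w_{P(\cmpl{G})}$, so that $w((x_0,\dots,x_n)) = \sup_{0 \leq i < n} \overline{w}_{G}(x_i,x_{i+1})$ on directed paths of $\cmpl{G}$. The inequality $\overline{w} \leq w$ is \cref{item:cov-decreases} of \cref{lem:covering-idempotent}. For the reverse inequality I would show that $w$ is covering-subadditive for the path-connected covering relation of \cref{def:path-connected}. The maximality clause of \cref{lem:covering-idempotent} then gives $w \leq \overline{w}$, since $\overline{w}$ is the largest covering-subadditive weight below $w$.

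To prove subadditivity, let $S \subseteq P(\cmpl{G})$ cover a path $p = (x_0,\dots,x_n)$, so that $p = s_1 \concat \cdots \concat s_m$ with each $s_j$ a subpath of some $S_j \in S$. There are two cases.

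\textbf{Case $n = 0$.} Then $w(p) = 0$, and subadditivity is trivial.

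\textbf{Case $n \geq 1$.} Fix an edge block $(x_i, x_{i+1})$ of $p$. It is a subpath on two vertices, so by \cref{item:window-short} of \cref{lem:window-of-concatenation} it is a subpath of a single $s_j$, hence of $S_j$. Since $w(S_j)$ is the supremum over the consecutive pairs of $S_j$, we get
\[
  \overline{w}_{G}(x_i,x_{i+1}) \leq w(S_j) \leq \sum_{s \in S} w(s),
\]
where the last step uses nonnegativity. Taking the supremum over $i$ gives $w(p) \leq \sum_{s\in S} w(s)$.

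The main point of care is the domain bookkeeping. The weight $w$ is defined on $P(\cmpl{G})$, and paths outside it should receive weight $\infty$. If $p$ is not a directed path, some pair $(x_i,x_{i+1})$ is not an edge, so $\overline{w}_{G}(x_i,x_{i+1}) = \infty$. In that case the same window argument forces some $S_j$ to contain that non-edge pair, which is impossible for $S \subseteq P(\cmpl{G})$; so $p$ has no cover and $\overline{w}(p) = \infty = w(p)$. Equivalently, one can regard $w$ as defined on all elementary paths by the same supremum formula, with $\overline{w}_{G} = \infty$ off edges, and then the argument above applies uniformly. I would also remark that no Step~3 issue of straddling blocks arises here, exactly because blocks have two vertices.
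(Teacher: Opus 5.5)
Your argument is correct and is the paper's proof: \cref{item:window-short} of \cref{lem:window-of-concatenation} places each edge $(x_i,x_{i+1})$ of $p$ inside a single element of the cover, so $w$ is covering-subadditive, and the maximality clause of \cref{lem:covering-idempotent} then gives $w \leq \overline{w}$. One aside in your bookkeeping paragraph is false, though nothing depends on it: if $\cmpl{G}$ keeps the edge set $E_G$, a non-edge need not have $\overline{w}_{G} = \infty$, because $\overline{w}_{G}$ is the shortest-path cost (for $E_G = \{(a,b),(b,c)\}$ with unit weights, $\overline{w}_{G}(a,c) = 2$), so $(a,c)$ gets $\overline{w} = \infty$ only because it has no cover inside $P(\cmpl{G})$, and your ``equivalent'' all-paths supremum formula would give it weight $2$ instead; the issue is vacuous if, as in \cref{prop:net-distance-comparison}, $\cmpl{G}$ carries $\overline{w}_{G}$ on all of $X \times X$.
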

\begin{proof}
    As in \cref{prop:hyp-idempotent}, write $w = w_{P(\cmpl{G})}$; since $P(G) = P^2(G)$, this is the case $q = 2$, where the equality holds.
    Let $S$ cover a path $p$.
    Each edge $(x_i,x_{i+1})$ of $p$ is a subpath of $p$ on two vertices, hence by \cref{item:window-short} of \cref{lem:window-of-concatenation} a subpath, and so an edge, of some $s \in S$, giving $\overline{w}_{G}(x_i,x_{i+1}) \leq w(s) \leq \sum_{s' \in S} w(s')$.
    Taking the supremum over the edges of $p$ gives $w(p) \leq \sum_{s \in S} w(s)$, so $w$ is covering-subadditive for the path-connected covering relation.
    By \cref{lem:covering-idempotent} the induced weight $\overline{w}$ is the largest such weight below $w$, whence $\overline{w} = w$.
\end{proof}

\subsection{Step 4: Distance comparison} \label{subsec:net-step4}

For digraphs the two distances agree exactly; this is what lets us recover the stability theorem of \cite{chowdhury2018persistent} in its original form.

\begin{proposition} \label{prop:net-distance-comparison}
    Let $G = (X,E_G,w_G)$ and $H = (Y,E_H,w_H)$ be weighted digraphs. Then
    \[
    \dpathcomplex{P(\cmpl{G})}{P(\cmpl{H})} = \dnetwork{G}{H}.
    \]
\end{proposition}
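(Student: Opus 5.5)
The plan is to prove the equality at the level of individual correspondences: for every $C \in \Corr(X,Y)$ we show $\dispath(C) = \disnetwork(C)$, where $\dispath$ is computed for the filtered path complexes generated by $P(\cmpl{G})$ and $P(\cmpl{H})$. Taking the infimum over $C$ and halving then gives the statement.

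The first step is to identify the entry weights concretely. By \cref{con:generated-filtration}, the entry weight of the filtered path complex generated by $P(\cmpl{G})$ is the induced weight $\overline{w}_{P(\cmpl{G})}$. By \cref{prop:net-path-idempotent} this equals $w_{P(\cmpl{G})}$. By \cref{def:digraph-path}, with $\overline{w}_G$ in place of $w_G$, this is
\[
  l_{P(\cmpl{G})}((x_0,\dots,x_n)) = \sup_{0 \leq i < n} \overline{w}_{G}(x_i,x_{i+1}).
\]
This formula is to hold for every elementary path on $X$, not only the allowed ones. I expect this extension to be the main point needing care, and I would handle it as follows.
\begin{enumerate}
  \item A sequence that is not a directed path has some consecutive pair $(x_i,x_{i+1})$ that is neither an edge nor reachable. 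Then $\overline{w}_G(x_i,x_{i+1}) = \infty$ by the convention $\inf\varnothing = \infty$.
  \item By \cref{item:window-short} of \cref{lem:window-of-concatenation}, every cover of such a path must contain a path in which $x_i, x_{i+1}$ are adjacent. Hence its induced weight is $\infty$ as well, so both sides agree.
  \item Length-$0$ paths have weight $0$ on both sides, since the supremum over the empty set is $0$.
  \item Consecutive repeated vertices $(x,x)$ need the convention that digraphs are reflexive, with $\overline{w}_G(x,x)=0$, or equivalently that irregular paths are discarded. I would state this convention explicitly.
\end{enumerate}

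For $\dispath(C) \leq \disnetwork(C)$, let $\sigma = ((x_0,y_0),\dots,(x_n,y_n))$ be an ordered lift through $C$. For each $i$, both $(x_i,y_i)$ and $(x_{i+1},y_{i+1})$ lie in $C$. Hence
\[
  |\overline{w}_G(x_i,x_{i+1}) - \overline{w}_H(y_i,y_{i+1})| \leq \disnetwork(C).
\]
Applying \cref{lem:sup-comparison} over the finitely many indices $i$ bounds $|l_{P(\cmpl{G})}(\pi_1\sigma) - l_{P(\cmpl{H})}(\pi_2\sigma)|$ by $\disnetwork(C)$. The case $n = 0$ is trivial, since both weights vanish. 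This is the same pattern as \cref{prop:hyp-distance-comparison} with $q=2$.

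For the reverse inequality $\disnetwork(C) \leq \dispath(C)$, take any $(x,y),(x',y') \in C$. The two-step lift $\sigma = ((x,y),(x',y'))$ is an ordered lift through $C$ with $\pi_1\sigma = (x,x')$ and $\pi_2\sigma = (y,y')$. By the formula above, its path weights are exactly $\overline{w}_G(x,x')$ and $\overline{w}_H(y,y')$. So each term in the supremum defining $\disnetwork(C)$ also appears in the supremum defining $\dispath(C)$. Combining the two inequalities, taking the infimum over correspondences, and halving yields the claimed equality.
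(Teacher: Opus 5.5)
Your proposal is correct and follows the paper's proof. For each correspondence $C$ you show $\dispath(C) = \disnetwork(C)$, obtaining $\leq$ by comparing the two suprema termwise along an arbitrary ordered lift and $\geq$ from the two-vertex lift $((x,y),(x',y'))$, and then take the infimum over correspondences.

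Your preliminary identification of the entry weight with $\sup_i \overline{w}_G(x_i,x_{i+1})$ on all elementary paths, via \cref{prop:net-path-idempotent} and \cref{item:window-short} of \cref{lem:window-of-concatenation}, is a step the paper uses without comment. It, and your item~1, require reading the edges of $\cmpl{G}$ as the pairs of finite induced weight: if the edge set stayed $E_G$, a reachable non-edge would have entry weight $\infty$ but finite $\overline{w}_G$, and the equality itself could fail. The reflexivity convention of your item~4 is not actually needed.
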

\begin{proof}
Let $C \colon X \rightrightarrows Y$ be a correspondence; we show $\dispath(C) = \disnetwork(C)$.

For $\dispath(C) \leq \disnetwork(C)$, let $\sigma = ((x_0,y_0),\dots,(x_n,y_n))$ be an ordered lift through $C$, and set $p = \pi_1(\sigma)$ and $p' = \pi_2(\sigma)$.
\Cref{def:digraph-path}, applied to $\cmpl{G}$, gives
\[
    w_{P(\cmpl{G})}(p) = \sup_{0 \leq i < n} \overline{w}_{G}(x_i,x_{i+1}), \qquad w_{P(\cmpl{H})}(p') = \sup_{0 \leq i < n} \overline{w}_{H}(y_i,y_{i+1}).
\]
For each $i$ we have $(x_i,y_i),(x_{i+1},y_{i+1}) \in C$, so $|\overline{w}_{G}(x_i,x_{i+1}) - \overline{w}_{H}(y_i,y_{i+1})| \leq \disnetwork(C)$.
As in \cref{prop:hyp-distance-comparison}, applying this to each term of the two suprema (finite in number) gives
\[
    |w_{P(\cmpl{G})}(p) - w_{P(\cmpl{H})}(p')| \leq \disnetwork(C).
\]
Taking the supremum over ordered lifts gives $\dispath(C) \leq \disnetwork(C)$.

For $\disnetwork(C) \leq \dispath(C)$, let $(x,y),(x',y') \in C$.
Then $\sigma = ((x,y),(x',y'))$ is an ordered lift through $C$ with $\pi_1(\sigma) = (x,x')$ and $\pi_2(\sigma) = (y,y')$, both paths of length one.
By \cref{def:digraph-path}, $w_{P(\cmpl{G})}((x,x')) = \overline{w}_{G}(x,x')$ and $w_{P(\cmpl{H})}((y,y')) = \overline{w}_{H}(y,y')$, so
\[
    |\overline{w}_{G}(x,x') - \overline{w}_{H}(y,y')| \leq \dispath(C).
\]
Taking the supremum over such pairs gives $\disnetwork(C) \leq \dispath(C)$.

Thus $\dispath(C) = \disnetwork(C)$ for every $C$, and taking the infimum over correspondences gives the claim.
\end{proof}

Feeding this into stability for path complexes yields stability for digraphs.

\begin{theorem} \label{thm:net-stability}
Let $G = (X,E_G,w_G)$ and $H = (Y,E_H, w_H)$ be weighted digraphs, and let $k \in \mathbb{N}$. Then
\[
    \dbottleneck{\dgm_k(G)}{\dgm_k(H)} \leq 2\dnetwork{G}{H}.
\]
\end{theorem}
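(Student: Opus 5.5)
The plan is to follow exactly the pattern of \cref{thm:hyp-stability} and \cref{thm:dhyp-stability}: reduce the statement to \cref{cor:transfer}, using the digraph distance comparison as its hypothesis. By definition, the persistent path homology of a weighted digraph $G$ is that of its associated path complex $P(\cmpl{G})$ (\cref{def:digraph-path} applied to the completed weight $\overline{w}_{G}$). This is a weighted path complex, and it generates a filtered path complex. So we have $\dgm_k(G) = \dgm_k(P(\cmpl{G}))$ and $\dgm_k(H) = \dgm_k(P(\cmpl{H}))$ for every $k$, which is precisely the setup of \cref{cor:transfer} with $d = \dnetwork{\cdot}{\cdot}$.

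The hypothesis of \cref{cor:transfer} is the inequality $\dpathcomplex{P(\cmpl{G})}{P(\cmpl{H})} \leq \dnetwork{G}{H}$. This follows from \cref{prop:net-distance-comparison}, which gives equality. Chaining the equality of diagrams, \cref{thm:path-stability}, and this comparison then yields $\dbottleneck{\dgm_k(G)}{\dgm_k(H)} \leq 2\dnetwork{G}{H}$.

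The only point needing care is that $\dpathcomplex{\cdot}{\cdot}$ is computed from the entry weights of the \emph{generated} filtered path complexes, that is, from $\overline{w}_{P(\cmpl{G})}$ rather than $w_{P(\cmpl{G})}$. \Cref{prop:net-distance-comparison} compares the latter. By \cref{prop:net-path-idempotent} the two coincide, so the comparison transfers verbatim to the generated filtrations. I expect this identification to be the main (though mild) obstacle. Once it is made, the theorem is a formal consequence and recovers the Chowdhury--M\'emoli bound.
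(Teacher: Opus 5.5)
Your proposal is correct and matches the paper's proof, which consists of applying \cref{cor:transfer} with \cref{prop:net-distance-comparison} supplying the hypothesis. Your extra check that the generated filtration's entry weight $\overline{w}_{P(\cmpl{G})}$ coincides with $w_{P(\cmpl{G})}$ by \cref{prop:net-path-idempotent} is left tacit in the paper (whose framework section even says Step 4 does not depend on Step 3). That identification is exactly what justifies computing $\dispath$ with $w_{P(\cmpl{G})}$ in the proof of \cref{prop:net-distance-comparison}.
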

\begin{proof}
By definition, the persistent path homology of a digraph is that of its associated path complex.
The claim is therefore \cref{cor:transfer}, whose hypothesis is the equality of \cref{prop:net-distance-comparison}.
\end{proof}
\section{Conclusion and future work} \label{sec:conclusion}

We have proven a stability theorem for the persistent path homology of path complexes (\cref{thm:path-stability}) and used it as a single engine from which stability for several combinatorial objects follows.
The mechanism is the covering framework of \cref{sec:framework}: a weight on the generating cells of an object is extended to all cells by covering, the extension is idempotent (\cref{lem:covering-idempotent}), and its sublevel sets give a filtered object whose stability reduces to that of path complexes.
Instantiating this framework gives stability of persistent path homology for hypergraphs (\cref{thm:hyp-stability}), sequence hypergraphs (\cref{thm:dhyp-stability}), and digraphs (\cref{thm:net-stability}), the last recovering the theorem of Chowdhury and M\'emoli \cite{chowdhury2018persistent}.
The parallel structure of these instances is summarized in \cref{tab:instances}, and \cref{tab:structure} records where each shared ingredient is found.

\begin{table}[ht]
\renewcommand{\arraystretch}{1.3}
\begin{center}
\begin{tabular}{@{}lccc@{}}
\toprule
& \textbf{Hypergraphs} & \textbf{Sequence hypergraphs} & \textbf{Digraphs} \\
& (\cref{sec:hypergraph-stability}) & (\cref{sec:sequence-hypergraph-stability}) & (\cref{sec:network-stability}) \\
\midrule
Filtered object & Def.~\ref{def:filtered-hypergraph} & Def.~\ref{def:sequence-hypergraph} & Def.~\ref{def:filtered-digraph} \\
Weighted object & Def.~\ref{def:weighted-hypergraph} & Def.~\ref{def:weighted-sequence-hypergraph} & Def.~\ref{def:weighted-digraph} \\
Covering relation & Def.~\ref{def:hypergraph-connected} & Def.~\ref{def:sequence-contained} & Def.~\ref{def:digraph-covering} \\
Distance & Def.~\ref{def:hypergraph-distance} & Def.~\ref{def:dhypergraph-distance} & Def.~\ref{def:network-distance} \\
Associated path complex & Def.~\ref{def:density-q-path} & Def.~\ref{def:sequence-hypergraph-path} & Def.~\ref{def:digraph-path} \\
Induced path weight & Prop.~\ref{prop:hyp-idempotent} & Prop.~\ref{prop:dhyp-path-idempotent} & Prop.~\ref{prop:net-path-idempotent} \\
Distance comparison & Prop.~\ref{prop:hyp-distance-comparison} & Prop.~\ref{prop:dhyp-distance-comparison} & Prop.~\ref{prop:net-distance-comparison} \\
Stability theorem & Thm.~\ref{thm:hyp-stability} & Thm.~\ref{thm:dhyp-stability} & Thm.~\ref{thm:net-stability} \\
\bottomrule
\end{tabular}
\end{center}
\caption{The ingredients shared by \cref{sec:hypergraph-stability,sec:sequence-hypergraph-stability,sec:network-stability}.}
\label{tab:structure}
\end{table}

Several directions remain open.

\paragraph{Metric spaces and the Vietoris--Rips complex.}
A finite metric space gives a filtered simplicial complex, and it is natural to ask whether the classical stability of Vietoris--Rips persistent homology \cite{chazal-desilva-oudot:stability-geometric-complexes} can be recovered from \cref{thm:path-stability}.
Passing through the face digraph of \cite{grigor2012homologies} gives the right persistence module, by the comparison of \cite{grigor2012homologies} between the path homology of a face digraph and the simplicial homology of the complex, but the wrong distance: \cref{def:path-distance} then quantifies over correspondences between the simplices of the two complexes rather than between their points, and these can fail to have finite distortion.
Already for a one-point and a two-point metric space the only such correspondence relates the single simplex of the first to two $\subseteq$-incomparable simplices of the second, which forces the distortion to be infinite.
A path complex built directly on the points, with a path allowed once its vertex set has small enough diameter, has the right distance but is not known to have the right homology.
Identifying a path-complex model of the Vietoris--Rips filtration for which both hold would bring simplicial persistent homology into the framework developed here.

\paragraph{Other homology theories of hypergraphs.}
The path homology of hypergraphs used here is one of many homology theories that have been attached to hypergraphs.
The survey of Gasparovic, Purvine, Sazdanovi\'c, Wang, Wang, and Ziegelmeier \cite{gasparovic-purvine-sazdanovic-wang-wang-ziegelmeier:survey} describes nine such constructions.
Among them is the embedded homology of Bressan, Li, Ren, and Wu \cite{bressan-li-ren-wu:embedded}, built from the largest chain complex contained in, and the smallest containing, the span of the hyperedges.
It would be natural to ask which of these theories admit stability results, and whether any of them fit a covering framework of the kind developed here.

\paragraph{Directed and oriented hypergraphs.}
The sequence hypergraphs of \cref{sec:sequence-hypergraph-stability} are one way to add directional structure to a hypergraph, in which each edge carries a linear order on its vertices.
Two other notions appear in the literature and are not captured by our setup.
A \emph{directed hypergraph} has edges that are ordered pairs $(T, H)$ of disjoint vertex sets, a \emph{tail} and a \emph{head}, modelling a many-to-many transition; these arise in database theory and in the modelling of chemical reaction networks.
An \emph{oriented hypergraph} instead equips each vertex--edge incidence with a sign, generalizing the incidence structure of a signed graph.
Persistent and embedded homologies have been studied for hyperdigraphs by assigning orientations to hyperedges; developing stability results for these directed variants, and relating them to the theory here, would clarify how directional information interacts with the covering framework.

\paragraph{A more abstract framework.}
Our covering framework is stated concretely, one instance at a time, and the reduction to path complexes is carried out separately in each section.
It would be worthwhile to formulate the framework abstractly enough that the four steps become a single theorem, with each object obtained by supplying a covering relation on a set of cells and a functor to path complexes.
Such a formulation would likely clarify the relationship between our covering relations and the Grothendieck pretopologies discussed in \cref{subsec:coverage-literature}, and might allow objects beyond those treated here to be brought under the same stability result with no further work.

\bibliographystyle{amsalphaurlmod}
\bibliography{all-refs}

%Uncomment the following if you would like an appendix
\appendix
\renewcommand{\thesection}{\Alph{section}}
%\begin{appendices}
%\section{Code for computing monoidal products}
%  \lstinputlisting[language=Python]{Graph Products.py}
%\end{appendices}
% End of appendix
%\newpage
% Uncomment the following if you have a bibliography file

\end{document}